\numberwithin{equation}{section}
\theoremstyle{plain}
	\newtheorem{theorem}{Theorem}[section]
	\newtheorem{proposition}[theorem]{Proposition}    
	\newtheorem{lemma}[theorem]{Lemma}          
	\newtheorem{corollary}[theorem]{Corollary}
	\newtheorem{remark}[theorem]{Remark}
\theoremstyle{definition}
\renewcommand{\tilde}{\widetilde}
\newcommand{\cu}{\mathbf{i}}
\newcommand{\eps}{\varepsilon}
\DeclareMathOperator{\supp}{supp}
\DeclareMathOperator{\diam}{diam}
\DeclareMathOperator{\dist}{dist}
\newcommand{\der}[2]{\frac{\mathrm{d} \hspace{0.1mm} #1}{\mathrm{d} \hspace{0.1mm} #2}}
  \def\CC{{\mathbb C}}
 \def\NN{{\mathbb N}}  
  \def\RR{{\mathbb R}}
 \def\cB{{\mathcal B}} \def\cC{{\mathcal C}} 
\def\cD{{\mathcal D}}   
\def\cG{{\mathcal G}}
\def\cP{{\mathcal P}}
\def\mP{{\mathfrak P}}
\title{Hölder regularity for collapses of point-vortices}
\author{Martin Donati\footnote{Institut Camille Jordan, Université Claude Bernard Lyon 1, 43 bd. du 11 novembre, Villeurbanne Cedex F-69622, France. Email : donati@math.univ-lyon1.fr}$\;$ and Ludovic Godard-Cadillac\footnote{Laboratoire de Mathématiques Jean Leray, CNRS UMR 6629, Université de Nantes, 2 rue de
la Houssinière, BP 92208, 44322 Nantes, France.}}
\date{\today}
\begin{document}

\maketitle

\begin{abstract}
The first part of this article studies the collapses of point-vortices for the Euler equation in the plane and for surface quasi-geostrophic equations in the general setting of $\alpha$ models. 
In these models the kernel of the Biot-Savart law is a power function of exponent $-\alpha$. 
It is proved that, under a standard non-degeneracy hypothesis, the trajectories of the point-vortices have a Hölder regularity up to the time of collapse. 
The Hölder exponent obtained is $1/(\alpha+1)$ and this exponent is proved to be optimal for all $\alpha$ by exhibiting an example of a $3$-vortex collapse.

The same question is then addressed for the Euler point-vortex system in smooth bounded connected domains. 
It is proved that if a given point-vortex has an accumulation point in the interior of the domain as $t\to T$, then it converges towards this point and displays the same Hölder continuity property. A partial result for point-vortices that collapse with the boundary is also established : we prove that their distance to the boundary is Hölder regular.
\end{abstract}

\section*{Introduction}
The point-vortex model is a system of ordinary differential equations introduced by Helmholtz \cite{Helmholtz_1858} that is a standard model for uniform inviscid planar fluid mechanics.
This system is derived from the two-dimensional Euler equations written in terms of vorticity.
Typically, it gives account of the natural situation where the vorticity of the fluid is sharply concentrated around some points $x_i\in\RR^2$ with $i=1\dots N$ and is (formally) replaced by Dirac masses. 
These points then evolve in the plane according to the Euler equations.
This system of differential equations has been widely studied in its different aspects, both for itself and for its links with PDE in fluid mechanics.
For an introduction to the Euler point-vortex system, we refer to~\cite[chap.\,4]{Marchioro_Pulvirenti_1993}. 
For a more extensive presentation, see~\cite{Newton_2001}.
This point-vortex model is used to describe vortex phenomena arising in different problems of fluid mechanics (see for instance: \cite{Glass_Munnier_Sueur_2018, Gryanik_Borth_Olbers_2004, Klein_Majda_Damodaran_1995, Lacave_Miot_2009, Marchioro_Pulvirenti_1984} and references therein).
The question of a rigorous derivation of the point-vortex system from the Euler equations (also called desingularization problem, or localization problem) is a standard problem that is linked to the problem of confinement and localization of vorticity~\cite{Donati_Iftimie_2020, Marchioro_Pulvirenti_1993, Smets_VanSchaftingen_2010}.

The present article focuses on the study of collapses of point-vortices at finite time $T>0$. 
The existence of collapses for the Euler point-vortex problem in the plane has been obtained independently by~\cite{Aref_1979, Grobli_1877,Novikov_1975}.
The existence of collapses of point-vortices for all bounded smooth domains has been obtained recently by~\cite{Grotto_Pappalettera_2020}.
In~\cite{Marchioro_Pulvirenti_1984}, the authors prove that when the domain is a disc, the collapses are improbable in the sense that the set of initial datum leading to a collapse in finite time has a vanishing Lebesgue measure.
This result has been extended to any bounded smooth domains (simply connected and multi-connected domains) by~\cite{Donati_2021}.
In~\cite[chap.\,4]{Marchioro_Pulvirenti_1993}, the authors obtained the improbability of collapses when the domain is the whole plane, under the non-neutral clusters hypothesis~\eqref{eq:no null partial sum} presented hereafter.
This hypothesis has been weakened by~\cite{Godard-Cadillac_2021}.
In that last article, the question of the behaviors of the point-vortices during a collapse is addressed: 
it is proved in the case of Euler equation in the plane that, under the non-neutral clusters hypothesis, the positions of the point-vortices converge as $t$ goes to the time of collapse.

In the context of geophysical fluids, a standard model is given by the surface quasi-geostrophic equations. 
These equations have many common properties with the Euler equations. 
An extension of the point-vortex theory to these equations has been proposed recently by~\cite{Geldhauser_Romito_2020}. 
In that article, the authors study the desingularization problem in order to obtain a rigorous derivation of the system (see also~\cite{Cavallaro_Garra_Marchioro_2021, Rosenzweig_2020, Godard-Cadillac_Gravejat_Smets_2020}) and the improbability of collapses of point-vortices. 
This result was improved in~\cite{Godard-Cadillac_2021}.
The collapses for the quasi-geostrophic point-vortices are studied in~\cite{Badin_Barry_2018, Godard-Cadillac_2021_b, Reinaud2021} for the $3$ vortex problem.

In~\cite{Godard-Cadillac_2021_b}, the second author of the present article studied the collapses for the $3$ vortex problem in the Euler and in the quasi-geostrophic case. 
It is proved, under the non-neutral clusters hypothesis, that in the presence of a collapse, the trajectories are Hölder continuous, with an exponent that depends explicitly on the singularity of the vorticity kernel.
Nevertheless, the proof in the previous paper fails to extend to the case of $N$ vortices.

In the present article, we deal with this problem but the approach is different and we prove that the conjectured Hölder estimate does hold with $N$ vortices. 
More precisely, we prove in Theorem \ref{thrm:plane} hereafter that for general $\alpha$-models of point-vortex dynamics, under the non neutral clusters hypothesis, the trajectories of any solution well-defined on $[0,T)$ can be extended as a  $\frac{1}{1+\alpha}$-Hölder regular trajectory on $[0,T]$. 
We also study the degenerate case where the total sum of the intensities is vanishing (Theorem~\ref{thrm:weak}).
In second time, we study the case of the Euler point-vortex dynamics in the case of a smooth bounded domain $\Omega$ and we prove in Theorem \ref{thrm:bounded} that the trajectories of the point-vortices that remain far from the boundary are $\frac{1}{2}$-Hölder regular. 
We also prove for the points collapsing with the boundary that their distance to the boundary is $\frac{1}{2}$-Hölder regular.
Our method of proof for the two theorems relies on a \emph{clusterization} by induction. We group point-vortices into clusters and show that our construction has a finite number of steps and stays stable in a large enough interval of time. 
The estimate on the interval of time relies on a precise study of the dynamics for the centers of vorticity of the clusters and a recursive argument. 

Such a mathematical result, although interesting per se, should be seen in a context of the mean-field theory for PDE. Indeed, several authors obtained results showing that the regular solutions to the 2D Euler equation or surface quasi-geostrophic equations can be approximated by $N$ point-vortices with vanishing vorticity as $N\to+\infty$. 
See~\cite{Nguyen_Rosenzweig_Serfaty_2021, Coghi_Maurelli_2020, Duerinckx_2016, Flandoli_Gubinelli_Priola_2011} and references therein. In such context, our result contributes to understanding regularity thresholds below which uniqueness for these PDE no longer holds~\cite{Grotto_Pappalettera_2020}. 
For instance we recover the expected $1/3$ Hölder regularity for the dynamics associated to the fractional laplace operator of exponent $1/2$. This exponent is known for Euler 3D in the context of the Onsager conjecture~\cite{Isett_2018}. 

\section{Presentation of the problem and main results}

\subsection{Hölder regularity for Point-vortices in the plane}

\subsubsection{Point-vortex for the Euler equation in the plane}
The motion of an homogeneous inviscid fluid in the plane is given by the two-dimensional Euler equations. 
Using the vorticity formulation, these equations take the form
\begin{equation}\tag{Eu}\label{eq:Euler}
    \left\{\begin{array}{l}\displaystyle
    \frac{\partial\omega}{\partial t}+v\cdot\nabla\omega=0,\vspace{0.2cm}\\
    v=-\nabla^\perp(-\Delta)^{-1}\,\omega.
    \end{array}\right.
\end{equation}
In the equations above, $v:[0,T]\times\RR^2\to\RR^2$ is the velocity field of the fluid and $\omega:[0,T]\times\RR^2\to\RR$ is the vorticity. 
The vorticity can be deduced from the velocity with the formula $\omega=\partial_1 v_2-\partial_2 v_1=\nabla^\perp\cdot v$, where the notation $x^\perp:=(-x_2,x_1)$ denotes the counter-clockwise rotation of angle $\pi/2$ in the plane. 
The first equation in~\eqref{eq:Euler} is a transport equation of the vorticity by the flow of the fluid which is assumed to be incompressible. 
The second equation is called the \emph{Biot-Savart law}.

To obtain the point-vortex model, we assume that the vorticity at time $t=0$ is a sum of Dirac masses $\sum_{i=1}^Na_i\,\delta_{x_i}$, where $a_i$ is the intensity of the $i^{th}$ vortex located at position $x_i$. 
The Euler equation~\eqref{eq:Euler} implies that for future times the $i^{th}$-vortex is still a Dirac mass with the same intensity $a_i$ and with a position $x_i(t)$ that evolves according to the following equations:
\begin{equation}\label{eq:evolution euler point vortex}
\der{}{t}x_i(t):=\frac{1}{2\pi}\sum_{\substack{j=1\\j\neq i}}^Na_j\frac{\big(x_i(t)-x_j(t)\big)^\perp}{\big|x_i(t)-x_j(t)\big|^2}.
\end{equation}
These equations of evolution are obtained from the Euler equations~\eqref{eq:Euler} by computing the velocity of the fluid in the case where the vorticity is a sum of Dirac masses $\sum_{i=1}^Na_i\,\delta_{x_i(t)}$.
This gives
\begin{equation}
    v(t,x)=\frac{1}{2\pi}\sum_{j=1}^Na_j\frac{\big(x-x_j(t)\big)^\perp}{\big|x-x_j(t)\big|^2},
\end{equation}
where we used that the Green function of the $(-\Delta)$ operator in the plane is given by:
\begin{equation}x\longmapsto\frac{1}{2\pi}\,\ln\bigg(\frac{1}{|x|}\bigg).\end{equation}
Under the natural assumption that a given point-vortex does not interact with itself at distance $0$, the transport equation for the vorticity gives~\eqref{eq:evolution euler point vortex}. 
For the sake of generalization, we define the function $G_\alpha:\RR^2\setminus\{0\}\to\RR$, with $\alpha\geq0$ by
\begin{equation}
\left\{ \begin{array}{cc}
     G_1(x) = \ln(|x|)  &  \vspace{2mm} \\
     \displaystyle G_\alpha(x) = \frac{|x|^{1-\alpha}}{1-\alpha} & \text{ if } \alpha \neq 1.
\end{array}\right.
\end{equation}
In particular, it satisfies for every $\alpha \ge 0$ that
\begin{equation}\label{eq:K alpha}
|\nabla G_\alpha(x)|=\frac{1}{|x|^\alpha}.\end{equation}
The function $G_\alpha$ for a fixed value of $\alpha\geq0$ will be referred throughout this article as \emph{the kernel profiles of exponent} $\alpha$. 
The Euler point-vortex equation can then be written using the kernel profile $G_\alpha$ with $\alpha=1$:
\begin{equation}\label{eq:evolution euler K1}
    \der{}{t}x_i(t):=\frac{1}{2\pi}\sum_{\substack{j=1\\j\neq i}}^Na_j\nabla^\perp G_1\big(x_i(t)-x_j(t)\big).
\end{equation}

\subsubsection{Point-vortex for the quasi-geostrophic equations}
In geophysical fluid dynamics, a standard model is given by the surface quasi-geostrophic equations :
\begin{equation}\tag{SQG}\label{eq:SQG}
    \left\{\begin{array}{l}\displaystyle
    \der{\omega}{t}+v\cdot\nabla\omega=0,\vspace{0.2cm}\\
    v=-\nabla^\perp(-\Delta)^{-s}\,\omega,
    \end{array}\right.
\end{equation}
with $0<s<1$. 
These equations give account of a quasi-stratified fluid subject to Brünt-Väisälä oscillations evolving in a rapidly rotating frame. 
This model is widely used for weather forecast~\cite{Pedlowsky_1987, Vallis_2006}.
The transport equation of the vorticity in~\eqref{eq:SQG} is the same as in~\eqref{eq:Euler}. 
The difference lays in the Biot-Savart law that involves a fractional Laplace operator in this case. 
Formally, if we take $s=1$ in~\eqref{eq:SQG} then it gives back~\eqref{eq:Euler}.
In the plane, the Green function of the fractional Laplace operator is given by
\begin{equation}
    x\longmapsto\frac{C_s}{|x|^{2(1-s)}},\qquad\text{where}\qquad C_s:=\frac{\Gamma(1-s)}{2^{2s}\,\pi\,\Gamma(s)},
\end{equation}
with $\Gamma$ the standard Gamma function.

In~\cite{Geldhauser_Romito_2020}, the authors suggested to exploit the proximity of formulations between~\eqref{eq:Euler} and~\eqref{eq:SQG} to derive a more general point-vortex model. 
Indeed we formally choose an initial value for $\omega$ in~\eqref{eq:SQG} to be a sum of Dirac masses $\sum_{i=1}^Na_i\,\delta_{x_i}$. 
A reasoning similar to the case of the Euler equation (but involving this time the fractional Green function) gives that the structure of Dirac masses persists and the position of the Dirac masses evolves in time according to:
\begin{equation}\label{eq:evolution alpha models 2}
      \der{}{t}x_i(t)=2C_s(1-s)\sum_{\substack{j=1\\j\neq i}}^Na_j\nabla^\perp G_{(3-2s)}\big(x_i(t)-x_j(t)\big).
\end{equation}
These formulations involving the kernel profiles $G_\alpha$ allows us to consider the general $\alpha$-point-vortex model in the plane for $\alpha\geq 0$: 
\begin{equation}\label{eq:evo alpha}
      \der{}{t}x_i(t):=\sum_{\substack{j=1\\j\neq i}}^Na_j\,\nabla^\perp G_\alpha\big(x_i(t)-x_j(t)\big)=\sum_{\substack{j=1\\j\neq i}}^Na_j\,\frac{(x_i(t)-x_j(t))^\perp}{|x_i(t)-x_j(t)|^{\alpha+1}}.
\end{equation}
Up to a time rescaling, the point-vortex model for the  Euler equations in the plane~\eqref{eq:evolution euler K1} or quasi-geostrophic point-vortex model~\eqref{eq:evolution alpha models 2} are particular cases of the more general evolution system~\eqref{eq:evo alpha} that we study in this article.

The point-vortex model~\eqref{eq:evo alpha} admits a unique solution by the Cauchy-Lipschitz theorem provided that the right-hand side of~\eqref{eq:evo alpha} remains bounded and Lipschitz.
This means that this system is well posed in the absence of collapses of point-vortices. In other words, if $T>0$ the maximal time of existence of the solution is finite then
\begin{equation}\label{def:collapse}
    \liminf_{t\to T^-}\;\min_{i\neq j}\,|x_i(t)-x_j(t)|=0.
\end{equation}
The study of collapse situations~\eqref{def:collapse} has several interests. First, understanding collapses permits to avoid them and ensures the system to be well-defined for all times. Moreover, such a study is necessary to understand how to extend the trajectories after a collapse in a way that is physically relevant. These problems are linked to the issue of topological regularization of trajectories~\cite{Hiraoka_2008, Hiraoka_2009} and the dissipation of physical quantities such as energy or enstrophy~\cite{Gotoda_Sakajo_2016, Gotoda_Sakajo_2017,Sakajo_2012}. We also expect to better understand the losses of uniqueness for the underlying PDE (see~\cite{Grotto_Pappalettera_2020} for instance).

\subsubsection{Main result of the article}
The main result of this article is the following:
\begin{theorem}[Hölder regularity for $\alpha$-point-vortex dynamics in the plane]\label{thrm:plane}
Consider the $\alpha$-point-vortex dynamic \eqref{eq:evo alpha} for a given $\alpha\geq0$ with intensities $a_i\neq 0$. 
Consider an initial datum $X\in\RR^{2N}$ such that the associated dynamic of point-vortices is well defined on $[0,T)$ for some final time $T>0$ finite. We assume that the intensities satisfy the ``\emph{non neutral clusters hypothesis}'':
\begin{equation}\label{eq:no null partial sum}
\forall\;P\subseteq\{1,\dots, N\}\; s.t.\;\;P\neq\emptyset,\qquad\sum_{i\in P}a_i\neq0.
\end{equation}
Then the trajectories of the point-vortices are Hölder continuous for all $i\in\{1,\dots, N\}$:
\begin{equation}
    \forall\,t_1<t_2\in[0,T),\qquad|t_2-t_1|\leq1\quad\Longrightarrow\quad\big|x_i(t_2)\,-\,x_i(t_1)\big|\:\leq\:C\,|t_2-t_1|^\frac{1}{\alpha+1},
\end{equation}
where the constant $C$ depends only on $N$, $\alpha$ and on the intensities $a_i$. In particular, the trajectories converge as $t \to T^-$.
\end{theorem}
The $1/2$-Hölder regularity obtained is the same as obtained for the collapse of $3$ vortices by~\cite{Aref_2010} when $\alpha=1$ (Euler case). 
When $\alpha=2$ (SQG case with exponent $s=1/2$) we also find the $1/3$-Hölder regularity known for the $3$-vortex problem~\cite{Reinaud2021}. 
As stated in introduction, this exponent is the same as the critical exponent of the Onsager conjecture. 
This is due to the fact that the Green function for $(-\Delta)^\frac{1}{2}$ in the plane is the same as the one for $(-\Delta)$ for the whole three dimensional space.
We also note that the Hölder exponent $1/(\alpha+1)$ obtained in the general case is the same as the one previously obtained in the partial result~\cite{Godard-Cadillac_2021_b}. 
We prove in Appendix~\ref{appendix:optimality} the existence of self-similar collapses for all $\alpha$ and we observe that the regularity at the time of collapse is $\frac1{\alpha+1}$-Hölder and not better. This shows the optimality of the Hölder regularity obtained in the above theorem.

The non-neutral clusters hypothesis is a non-degeneracy hypothesis that is standard in a context of collapses of point-vortices~\cite{Marchioro_Pulvirenti_1984,Marchioro_Pulvirenti_1993,Geldhauser_Romito_2020,Godard-Cadillac_2021}. 
Nevertheless, one can wonder what happens if this hypothesis is weakened.
If we allow the total sum of vortices to be $0$, then we are only able to prove a much weaker version of Theorem~\ref{thrm:plane}:

\begin{theorem}\label{thrm:weak}
Same hypothesis as Theorem~\ref{thrm:plane} except~\eqref{eq:no null partial sum} is replaced by the weaker ``\emph{non neutral \emph{sub}-clusters hypothesis}'': 
\begin{equation}\label{eq:no null sub partial sum}
\forall\;P\subseteq\{1,\dots, N\}\; s.t.\;\;P\neq\emptyset\;\;\text{and}\;\;\{1,\dots, N\},\qquad\quad\sum_{i\in P}a_i\neq0.
\end{equation}
Then we have for all indices $i\neq j\in\{1,\dots, N\}$ such that $ \liminf_{t \to T^-} |x_i(t)-x_j(t)|=0$:
\begin{equation}
    \forall\,t\in[0,T),\qquad|T-t|\leq1\quad\Longrightarrow\quad\big|x_i(t)-x_j(t)\big|\:\leq\:C\,|T-t|^\frac{1}{\alpha+1},
\end{equation}
where the constant $C$ depends only on $N$, $\alpha,$ and on the intensities $a_i$.
\end{theorem}
In Theorem \ref{thrm:weak}, we observe that assuming~\eqref{eq:no null sub partial sum} is enough to prove the convergence of the \emph{relative} trajectories (ie : the map $t\mapsto x_i(t) - x_j(t)$, for $i \neq j$ is a continuous map on $[0,T]$). Nevertheless, we cannot obtain the convergence of the actual trajectories $t\mapsto x_i(t)$ nor even the Hölder regularity of the relative trajectories.

\subsection{Hölder regularity for Point-vortices in smooth bounded domains}

\subsubsection{Point-vortex for the Euler equation in smooth bounded domains}
In the case of the Euler equations in a bounded simply-connected smooth connected domain of $\RR^2$ with impermeability condition at the boundary, it is possible to proceed to an analogous construction (see~\cite{Donati_2021}) and get
\begin{equation}
    \der{}{t}x_i(t):=a_i\nabla^\perp_x\gamma_\Omega\big(x_i(t),x_i(t)\big)-\sum_{\substack{j=1\\j\neq i}}^Na_j\nabla^\perp_x\,\cG_\Omega\big(x_i(t),x_j(t)\big),
\end{equation}
where $\cG_\Omega$ is the Green function of the $(-\Delta)$ operator in $\Omega$ and where 
\begin{equation}\label{def:robin function} \gamma_\Omega(x,y):=-\cG_\Omega(x,y)-\frac{1}{2\pi}G_1\big(x-y\big)\end{equation} 
is harmonic in both variables. 
This function $\gamma_\Omega$ satisfies also the following estimate:
\begin{equation}\label{eq:kernel robin estimate}
    \forall\;x,y\in\Omega,\quad\big|\nabla_x\,\gamma_\Omega(x,y)\big|\,\leq\,\frac{C_\Omega}{\dist\big(x,\partial\Omega\big)}
\end{equation}
where $C_\Omega$ is a constant that depends only on $\Omega$ and where $\dist(x,A):=\inf_{y\in A}\,|x-y|.$
Indeed, we know (see for instance~\cite{Donati_2021}) that $\gamma_\Omega$ satisfies
\begin{equation}\label{eq:kernel robin general estimate}
     \forall\;x,y\in\Omega,\quad\big|\nabla_x\,\gamma_\Omega(x,y)\big|\,\leq\,\frac{C_\Omega}{|x-y|}
\end{equation}
and since the map $y\mapsto \nabla_x\,\gamma_\Omega(x,y)$ is harmonic, then~\eqref{eq:kernel robin estimate} follows by application of the maximum principle.

In the case where the domain is non simply connected, an additional contribution must be taken into account coming from the circulation of the fluid around the holes of the domain. 
The $M$ holes in the domain $\Omega$ are noted $\Omega_m$ and the domain delimited by the exterior boundary, namely $\Omega$ ``without holes'' is denoted by $\Omega_0$. 
For any of these holes, the total circulation of the flow at the surface $\partial\Omega_m$ is preserved by the motion and is denoted by $\xi_m$. 
The equations for the point-vortex system in a non simply connected domain is given by (see for instance the Biot-Savart law given by \cite[Proposition 2.9]{IftimieLopesLopesWeakBounded2020}):
\begin{equation}\label{eq:evolution euler non simply connected}
        \der{}{t}x_i(t):=a_i\nabla^\perp_x\gamma_\Omega\big(x_i(t),x_i(t)\big)-\sum_{\substack{j=1\\j\neq i}}^Na_j\nabla^\perp_x\,\cG_\Omega\big(x_i(t),x_j(t)\big)+\sum_{m=1}^Mc_m(t)\beta_m\big(x_i(t)\big).
\end{equation}
In the equations above, the functions $\beta_m$ for $m \in \{1,\ldots,M\}$ are the harmonic vector fields, defined by
\begin{equation}\label{def:beta_m}
    \begin{cases}
    \nabla\cdot \beta_m = 0 & \text{ in }\Omega \vspace{2mm} \\
    \mathrm{curl} \, \beta_m = 0 & \text{ in }\Omega \vspace{2mm} \\
    \beta_m \cdot n = 0 & \text{ on } \partial\Omega \vspace{2mm}\\
    \displaystyle \int_{\Gamma_\ell} \beta_m \cdot (-n^\perp) \mathrm{d} s= \delta_{m,\ell} & \forall \ell \in \{1,\ldots,M\},
    \end{cases}
\end{equation}
where $\delta_{m,\ell}$ represents here the Kronecker symbol and $n$ is the exterior normal vector.
The quantity $c_m$ is given by:
\begin{equation}\label{def:c_m}
    c_m(t)=\xi_m+\sum_{k=1}^Na_k\,w_m\big(x_k(t)\big),
\end{equation}
where the $w_m$ are the harmonic maps defined by
\begin{equation}
    \left\{\begin{array}{ll}
-\Delta w_m=0&\qquad\text{in }\Omega,\\
w_m=1&\qquad\text{on }\partial\Omega_m,\\
w_m=0&\qquad\text{on }\partial\Omega\setminus\partial\Omega_m.
    \end{array}\right.
\end{equation}

Note that the extra term appearing in the case of non simply connected domains is a bounded term, by standard elliptic estimates since $\partial\Omega$ is smooth. Using the relation~\eqref{def:robin function}, the evolution equation~\eqref{eq:evolution euler non simply connected} can be rewritten under the following expanded form:
\begin{equation}\label{eq:evolution euler reformulate}
            \der{}{t}x_i(t)=\sum_{j=1}^Na_j\nabla^\perp_x\gamma_\Omega\big(x_i(t),x_j(t)\big)+\frac{1}{2\pi}\sum_{\substack{j=1\\j\neq i}}^Na_j\nabla^\perp\,G_1\big(x_i(t)-x_j(t)\big)+\sum_{m=1}^Mc_m(t)\beta_m \big(x_i(t)\big).
\end{equation}

For a wider introduction to the Euler point-vortex system in bounded domains, we refer to~\cite[chap.\,4]{Marchioro_Pulvirenti_1984} or to~\cite{Newton_2001}. 
For more details on the Green's function in bounded domains, see \cite{Gustafsson_1979}.

\subsubsection{Second main result of the article}

The second main result of this article concerns the Hölder regularity for point-vortices in a smooth bounded domain:

\begin{theorem}[Hölder regularity for Euler point-vortices in bounded domains]\label{thrm:bounded}
Consider the Euler point-vortex dynamic \eqref{eq:evolution euler non simply connected} in a simply or multi-connected smooth domain $\Omega$. 
Assume that the intensities $a_i\neq 0$ satisfy  the \emph{non neutral clusters hypothesis} \eqref{eq:no null partial sum}. 
Consider an initial datum $X\in\RR^{2N}$ such that the dynamics is well-defined on $[0,T)$ with $T > 0$. Define the set of vortices that collapse with the boundary:
\begin{equation}\label{def:I}
    I:=\big\{i=1\dots N\,:\,\liminf\limits_{t\to T^-}\;\dist\big(x_i(t),\partial\Omega\big)=0\big\}.
\end{equation}

Then, there exists a constant $C$ such that the following properties hold true:

$(i)$ If $i\notin I$ then
\begin{equation}
    \forall\;t_1<t_2\in[0,T),\qquad\qquad|t_2-t_1|\leq1\quad\Longrightarrow\quad\big|x_i(t_2)-x_i(t_1)\big|\:\leq\:C\sqrt{t_2-t_1\,}.
\end{equation}
In particular, $x_i(t)$ converges as $t\to T^-$ towards an interior point of $\Omega.$\vspace{0.2cm}

$(ii)$ If $i\in I$ then
\begin{equation}
    \forall\;t_1<t_2\in[0,T),\qquad\qquad|t_2-t_1|\leq1\quad\Longrightarrow\quad\Big|\dist\big(x_i(t_2),\partial\Omega\big)-\dist\big(x_i(t_1),\partial\Omega\big)\Big|\:\leq\:C\sqrt{t_2-t_1\,}.
\end{equation}
In particular, the distance between $x_i(t)$ and $\partial\Omega$ converges to $0$ as $t\to T$.
\end{theorem}

Note that the Hölder regularity obtained for Euler in the plane (Theorem~\ref{thrm:plane} with $\alpha=1$) is the same as the one obtained for Euler in bounded domains (Theorem~\ref{thrm:bounded}). 
Here the Hölder constant depends on the intensities $a_i$, the circulations $\xi_m$, $\Omega$, $N$ but also on the final time $T$ and the initial data $X$, in a way that we will specify during the proof.

\subsection{General properties of the point-vortex systems.}
Before proving the $3$ theorems that we stated here-before,  we gather properties for the point-vortex systems that we use in the proofs or that give elements of context about our results.

\subsubsection{Hamiltonian formulation of the problem}
The first and main property of the point-vortex systems is their Hamiltonian structure. Indeed, if we define the Hamiltonian of the system for $X = (x_1,\ldots,x_N)$  by
\begin{equation}\label{def:Hamiltonien}
    H(X):=\frac{1}{2}\sum_{i\neq j}a_i\,a_j\,G_\alpha\big(x_i-x_j\big),
\end{equation}
then it is a direct computation from~\eqref{eq:evo alpha} to check that
\begin{equation}
    a_i\der{}{t}x_i(t)=\nabla_{x_i}^\perp H(X).
\end{equation}
In particular, the Hamiltonian $H$ is preserved by the flow associated to the differential equation~\eqref{eq:evo alpha}. 
With this Hamiltonian structure of the equation, it is possible to make use of the Noether theorem to determine the other quantities that are left invariant.
The invariance of the Hamiltonian with respect to the translations of the plane implies the preservation of the vorticity vector that is defined by
\begin{equation}\label{def:M}
    M(X):=\sum_{i=1}^Na_i\,x_i.
\end{equation}
Similarly, the invariance of the Hamiltonian with respect to the rotations of the plane implies the preservation of the vorticity momentum defined by
\begin{equation}\label{def:I(X)}
    I(X):=\sum_{i=1}^Na_i\,|x_i|^2
\end{equation}
These three conservation laws can also be obtained by a direct computation using the point-vortex equation~\eqref{eq:evo alpha} (see for instance~\cite{Godard-Cadillac_2021}).

\subsubsection{Non neutral clusters hypothesis}

In the reference work by Marchioro and Pulvirenti~\cite[chap.\,4]{Marchioro_Pulvirenti_1993}, the authors study the generic situation for which the intensities of any cluster is not equal to $0$, which we called the ``\emph{non-neutral clusters hypothesis}'' at~\eqref{eq:no null partial sum} (the term ``\emph{non-neutral}'' is chosen in analogy with Coulomb interaction~\cite[chap.\,4]{Marchioro_Pulvirenti_1993}).
The main reason behind such an hypothesis relies on the properties of the centers of vorticity. We denote by $\cP(N)$ the collection of all the subsets of $\{1,\dots, N\}$ and we define  \begin{equation}\cP_0(N):=\cP(N)\setminus\{\emptyset\}.\end{equation}
For $P\in\cP_0(N)$, called in this article a \emph{cluster of vortices}, we define the center of vorticity associated to the cluster $P$ as being the barycenter:
\begin{equation}\label{def:B_P}
    B_P := \bigg(\sum_{i\in P}a_i\bigg)^{-1}\sum_{i\in P} a_i\, x_i.
\end{equation}
If $P=\{1,\dots, N\}$, the barycenter is the center of vorticity of the whole system and it is simply denoted by $B$. As a consequence of the preservation of the center of vorticity~\eqref{def:M}, we have that $B$ is preserved by the flow. For the center of vorticity of the clusters, similar computations gives the following quasi-conservation property:
\begin{equation}\label{eq:quasi-preservation}
    \forall P \in \cP_0(N),\qquad \left| \der{}{t} B_P(t)\right| \leq \sum_{i\in P} \sum_{j\notin P} \frac{C}{|x_i(t)-x_j(t)|^\alpha},
\end{equation}
where $C$ is a constant depending on the intensities $a_i$. Indeed, using~\eqref{eq:evo alpha} and $x^\perp=-(-x)^\perp$:
\begin{equation}
     \der{}{t}\sum_{i\in P}a_i\,x_i(t)=\sum_{i\in P}\sum_{j\notin P} a_i\,a_j\,\frac{(x_i(t)-x_j(t))^\perp}{|x_i(t)-x_j(t)|^{\alpha+1}},
\end{equation}
In~\cite[chap.\,4]{Marchioro_Pulvirenti_1993} for Euler and in~\cite[Proposition 2.1]{Godard-Cadillac_2021} for the general case, the quasi-preservation of the centers of vorticity of the clusters is used as the main tool to prove the following uniform bound:

\begin{theorem}[Uniform bound on the trajectories~\cite{Godard-Cadillac_2021}]\label{thrm:borne uniforme}

Consider the point-vortex dynamic~\eqref{eq:evo alpha} under the non-neutral clusters hypothesis~\eqref{eq:no null partial sum} and with a kernel profile $G_\alpha$ for $\alpha\geq0$ fixed.

Then, given any positive time $T>0$, there exists a constant $C$ such that for any initial datum $X\in\RR^{2N}$ that is not leading to a collapse on $[0,T)$,
\begin{equation}\label{eq:borne uniforme}
\sup\limits_{t\in[0,T)}\big|X-S_\alpha^tX\big|\leq C,
\end{equation}
where $S^t_\alpha$ is the flow associated to~\eqref{eq:evo alpha} with kernel profile $G_\alpha$.
Moreover, the constant $C$ depends only on $N$, the intensities $a_i$, and on the final time $T$. This constant $C$ does not depend on the initial datum $X\in\RR^{2N}$ nor on $\alpha\geq0$.
\end{theorem}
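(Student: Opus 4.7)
The plan is strong induction on $N$, with the quasi-preservation of cluster centers~\eqref{eq:quasi-preservation} as the main tool. The base case $N=1$ is immediate since a single vortex satisfies $\der{}{t}x_1=0$, giving $C=0$. For $N\ge 2$, I would suppose the claim holds for every system of strictly fewer than $N$ vortices, and prove it for $N$ vortices. The non-neutral clusters hypothesis~\eqref{eq:no null partial sum} is inherited automatically by every subsystem obtained by restricting to a cluster, which is essential for the induction to close.

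The key idea is a time-dependent cluster decomposition. I would fix a threshold $\delta>0$ to be chosen at the end, and at each time $t\in[0,T)$ define the partition $\cP^\delta(t)$ of $\{1,\dots,N\}$ whose blocks are the connected components of the graph with edges $\{i,j\}$ satisfying $|x_i(t)-x_j(t)|\le\delta$. On any sub-interval where $\cP^\delta(t)$ is constant, vortices belonging to distinct blocks are at mutual distance at least $\delta$, so the quasi-preservation estimate~\eqref{eq:quasi-preservation} yields $|\der{}{t}B_P(t)|\le C\delta^{-\alpha}$ for every block $P$, where $C$ depends only on $N$ and $\max_i|a_i|$. Integration produces a total drift of each cluster center of at most $CT\delta^{-\alpha}$. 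Inside any non-trivial block $P$ with $|P|<N$, the relative dynamics is the sum of the internal $|P|$-vortex system of type~\eqref{eq:evo alpha} and an exterior forcing term again bounded by $C\delta^{-\alpha}$; the induction hypothesis applied to this internal subsystem controls the relative displacements within $P$ by a constant depending only on $|P|$, the $a_i$, and $T$. The absolute displacement of any individual vortex is then the sum of the drift of its cluster center and its motion relative to that center, both controlled. If instead $\cP^\delta(t)=\{\{1,\dots,N\}\}$ on the whole interval, the configuration has diameter at most $N\delta$ and the exact conservation~\eqref{def:M} of the full center of vorticity $B$ closes the estimate directly.

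The main obstacle lies in handling the transitions of $\cP^\delta(t)$, at which two clusters may merge or split, and in making the whole scheme uniform in the initial data and in $\alpha$. Since the number of partitions of $\{1,\dots,N\}$ is finite and the flow is continuous away from collapse times, $[0,T)$ decomposes into sub-intervals of constant $\cP^\delta$, and the displacement bound propagates additively across the transitions. A final optimization of $\delta$ of the form $\delta\sim T^{1/(\alpha+1)}$ balances the inter-cluster drift of order $T\delta^{-\alpha}$ against the intra-cluster displacement of order $\delta$, producing a uniform bound of order $T^{1/(\alpha+1)}$; the announced $\alpha$-independence of $C$ then follows from the elementary inequality $T^{1/(\alpha+1)}\le\max(T,1)$ valid for all $\alpha\ge 0$, once the numerical constants appearing in~\eqref{eq:quasi-preservation} are tracked as constants depending only on $N$ and the $a_i$.
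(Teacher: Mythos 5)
Your overall strategy --- cluster the vortices by a distance threshold, use the quasi-preservation estimate~\eqref{eq:quasi-preservation} to bound the drift of each cluster center, control the motion inside a cluster separately, and optimize the threshold $\delta$ --- is indeed the right family of ideas: the cited proof (and the analogous Proposition~\ref{prop:prevent collapse} in this paper) is built on exactly the control of $\der{}{t}B_P$ combined with Lemma~\ref{lem:B_control}. However, there are two genuine gaps. The decisive one is your claim that, because the number of partitions of $\{1,\dots,N\}$ is finite, $[0,T)$ decomposes into finitely many sub-intervals on which $\cP^\delta(t)$ is constant. This is false: a partition-valued function of time can revisit the same finitely many values infinitely often, since two clusters may merge and split repeatedly as an inter-vortex distance oscillates across the threshold $\delta$. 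Your error budget is additive over transitions --- each sub-interval contributes an intra-cluster displacement of order $\delta$ (or a constant from the induction hypothesis) that does not shrink with the length of the sub-interval --- so with infinitely many transitions the bound is not $C\bigl(T\delta^{-\alpha}+\delta\bigr)$ but diverges. This is precisely the difficulty that the construction in Proposition~\ref{prop:prevent collapse} is designed to avoid: there the successive partitions $\mP^1,\mP^2,\dots$ are \emph{not} re-read from the configuration at every time, but are updated only at stopping times triggered by a quantified escape event, and each update produces a \emph{strict} sub-partition of the previous one; monotonicity then caps the number of steps at $N-1$. Without some such monotone bookkeeping, your scheme does not close.

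The second gap is in the induction step. Inside a block $P$ with $|P|<N$ you invoke the induction hypothesis ``applied to this internal subsystem,'' but the actual dynamics of the vortices in $P$ is the internal $|P|$-vortex system \emph{plus} a nonzero exterior forcing of size $O(\delta^{-\alpha})$; the theorem you are inducting on is stated only for the unforced system~\eqref{eq:evo alpha}, and since the vector field is nonlinear and singular you cannot superpose or Gr\"onwall the perturbation away near close encounters. To make the induction legitimate you must strengthen the inductive statement to allow a bounded external forcing --- this is exactly the role of the constant $C_1$ in hypothesis~\eqref{hyp:slow center of vorticity} of Proposition~\ref{prop:prevent collapse}, which phrases the whole estimate at the level of the cluster centers $B_P$ rather than at the level of subsystems of the ODE. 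With that reformulation the forcing is absorbed harmlessly; as written, your inductive step does not apply to the object you feed it.
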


A natural question concerning Theorem~\ref{thrm:borne uniforme} is to ask what this result becomes when the non-neutral cluster hypothesis~\eqref{eq:no null partial sum} ceases to be satisfied.
We focus here on the `\emph{non-neutral \emph{sub}-clusters hypothesis}'' defined as~\eqref{eq:no null sub partial sum}
In other words, all the strict sub-clusters must have the sum of their 
intensities different from $0$ but we allow the 
total sum $\sum_{i=1}^Na_i$ to be equal to $0$. This situation is 
achieved for instance by the vortex pair of intensities $+1$ 
and $-1$ that is translating at a constant speed. 
Having a total vorticity equal to $0$ corresponds to the physical situation where 
the fluid is initially at rest and, at $t=0$, it is subject to a vorticity-preserving perturbation.
In~\cite{Godard-Cadillac_2021} is established the following result:

\begin{theorem}[Uniform relative bound on the trajectories~\cite{Godard-Cadillac_2021}]\label{thrm:uniform relative bound}
For a given set of points noted $X=(x_1,\dots, x_N)\in\RR^{2N}$, we define the diameter of this set by
\begin{equation}
\diam(X)\;:=\;\max\limits_{i\neq j}|x_i-x_j|.
\end{equation}

Consider the point-vortex dynamic~\eqref{eq:evo alpha} under hypothesis~\eqref{eq:no null sub partial sum} with a kernel profile $G_\alpha$.
Let $T>0$ the final time. Then, for all initial datum $X\in\RR^{2N}$ that are not leading to collapse on $[0,T)$,
\begin{equation}\label{eq:borne uniforme relative}
\sup\limits_{t\in[0,T)}\diam\big(S_\alpha^tX\big)\leq \diam(X)+C,
\end{equation}
where $S^t_\alpha$ is the flow associated to~\eqref{eq:evo alpha}. Moreover, the constant $C$ depends only on $N$, the intensities $a_i$, and on the final time $T$. This constant $C$ does not depend on the initial datum $X\in\RR^{2N}$ nor on $\alpha>0$. 
\end{theorem}

The Hölder estimate that is proved in this article for the $\alpha$-point-vortex system (enunciated hereafter) must be considered in comparison with these two theorems.
More precisely, the theorem proved here can be interpreted in a certain way as an explicit computation of the constants appearing in~\eqref{eq:borne uniforme} and in~\eqref{eq:borne uniforme relative}, and more precisely concerning the dependency of these constants with respect to the final time $T$. It is indeed a consequence of the theorem proved hereafter that the constant $C$ in~\eqref{eq:borne uniforme} can be replaced by $\overline{C}\, T^\frac{1}{\alpha+1}$ when $T\leq 1$, where $\overline{C}$ is a constant independent on the final time $T$, and on the initial position $X\in\RR^{2N}$.

\section{Proofs of Hölder regularity in the plane: Theorems~\ref{thrm:plane} and~\ref{thrm:weak}.}
This section is devoted to the proof of the Hölder estimate in the plane stated in Theorems~\ref{thrm:plane} and~\ref{thrm:weak} for the $\alpha$-point-vortex dynamics, for any $\alpha \ge 1$.
The proof relies heavily on the quasi conservation of the center of vorticity of the different clusters of vortices stated in~\eqref{eq:quasi-preservation}. We first establish general properties for points $x_i$ that evolves in $\RR^p$ with $p \in \NN^\ast$ under general assumptions. In a second step, we show that the point-vortex system satisfies these assumptions with $p=2$ and this eventually leads to the conclusions of Theorems~\ref{thrm:plane} and~\ref{thrm:weak}.

\subsection{Clusters of vortices}
To start with, we define the degeneracy parameters
\begin{equation}
A_0:=\min_{\substack{P \in P(N) \\ P \neq \emptyset \\ P \neq \{1 ,\dots, N\}}} \left|\sum_{k\in P} a_k\right|,
\end{equation}
and
\begin{equation}\label{def:A}
A:=\min_{\substack{P \in P(N) \\ P \neq \emptyset}} \bigg|\sum_{k\in P} a_k\bigg|\;=\min\bigg\{A_0;\bigg|\sum_{i=1}^Na_i\bigg|\bigg\}.
\end{equation}
Remark that $A_0>0$ is equivalent to the \emph{non-neutral sub-clusters hypothesis}~\eqref{eq:no null sub partial sum} 
and $A>0$ is equivalent to the \emph{non-neutral clusters hypothesis}~\eqref{eq:no null partial sum}.
We also denote:
\begin{equation}\label{def:a}
a_0:=\bigg|\sum_{i=1}^Na_i\bigg|,\qquad\text{and}\qquad a:=\sum_{i=1}^N|a_i|.
\end{equation}
We are in position to state the first lemma:

\begin{lemma}\label{coro:B_control}
Let $(x_i)_{1\leq i \leq N}$ be a family of $N \in \NN^\ast $ points of $\RR^p$ associated to intensities $(a_i)_{1\leq i \leq N}$ satisfying the \emph{non neutral sub cluster hypothesis}~\eqref{eq:no null sub partial sum}. Then for every $P \in P(N)$ such that $P\neq \{1,\ldots,N\}$ and $P\neq \emptyset$, we have for all $i\in P$,
\begin{equation}
    |x_i - B_P| \;\leq\; \frac{a}{A_0}\;\max\limits_{j \in P} |x_i-x_j|,
\end{equation}
where $B_P$ was defined in~\eqref{def:B_P}.
\end{lemma}

\begin{proof}
The definition of $B_P$ gives
\begin{equation}
x_i-B_P=\frac{\sum_{j\in P}a_j(x_i-x_j)}{\sum_{j\in P}a_j}.
\end{equation}
Then, using the triangular inequality,
\begin{equation}
|x_i - B_P| \;\leq\;\frac{\sum_{j\in P}|a_j|\,\big|x_i-x_j\big|}{\big|\sum_{j\in P}a_j\big|}\;\leq\; \frac{\sum_{j\in P}|a_j|}{ \big|\sum_{j\in P} a_j\big|}\;\max\limits_{j\in P} |x_i-x_j|.
\end{equation}
Recalling the definitions of $a$ and $A_0$ given previously ends the proof.
\end{proof}

\begin{remark}\label{the_remark}
In the previous Lemma, if we assume the stronger hypothesis \eqref{eq:no null partial sum}, then the conclusion is also true for $P = \{1,\ldots,N\}$ by replacing $A_0$ by $A$.
\end{remark}

To study the system of vortices, we separate them into clusters so that we can study separately the sets of vortices that are close to each-other. 
The objective is to have both a control on the distances between vortices inside a same cluster and a control of the distance between clusters.
To build these clusters, we make use of the following lemma that is reminiscent of Lemma~A.2 in~\cite{Bethuel_Orlandi_Smets_2007_1}.

\begin{lemma}[Balls lemma]\label{lem:balls}
Let $(x_i)_{1\leq i \leq N}$ be $N$ points in $\RR^p$. Let $\eps > 0$ and $0<\kappa\leq \frac{1}{2}$. We denote by $\cB(x,r)$ the ball of $\RR^p$ centered in $x$ of radius $r$. Then there exist $\delta>0$
and a set of indices $Q \in \cP_0(N)$ such that
\begin{equation}\label{eq:interval for delta}
    \eps\leq\delta<\bigg(\frac{\kappa}{2}\bigg)^{-N}\!\eps,
\end{equation}
\begin{equation}\label{eq:inside clusters}
    \bigcup_{i=1}^N \cB(x_i,\eps) \subset \bigcup_{j\in Q} \cB(x_j,\delta)
\end{equation}
and
\begin{equation}\label{eq:outside clusters}
    \forall\;i\neq j \in Q, \quad |x_i-x_j|\geq 
    \kappa^{-1}\delta.
\end{equation}
\end{lemma}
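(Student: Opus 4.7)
The plan is to construct $Q$ and $\delta$ by a greedy merging procedure. I initialize $Q_0 := \{1,\dots,N\}$ and $\delta_0 := \varepsilon$, and I build a sequence $(Q_k, \delta_k)$ in which $Q_k$ strictly shrinks and $\delta_k$ grows by a fixed factor, terminating as soon as the separation condition \eqref{eq:outside clusters} is achieved. At step $k$, if every pair $i\neq j$ in $Q_k$ satisfies $|x_i - x_j| \geq \kappa^{-1}\delta_k$, I stop and set $Q := Q_k$, $\delta := \delta_k$. Otherwise there exist $i\neq j$ in $Q_k$ with $|x_i - x_j| < \kappa^{-1}\delta_k$, and I merge them by setting $Q_{k+1} := Q_k \setminus \{j\}$ and $\delta_{k+1} := 2\kappa^{-1}\delta_k$.

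The key point is that the covering property \eqref{eq:inside clusters} is preserved from step $k$ to step $k+1$. Indeed, from the triangle inequality and from $\kappa \leq 1$,
\[
D(x_j, \delta_k) \;\subset\; D\bigl(x_i,\,|x_i-x_j|+\delta_k\bigr) \;\subset\; D\bigl(x_i,\,(\kappa^{-1}+1)\delta_k\bigr) \;\subset\; D(x_i, 2\kappa^{-1}\delta_k) \;=\; D(x_i,\delta_{k+1}),
\]
so replacing $x_j$ by $x_i$ and inflating the radius to $\delta_{k+1}$ still covers everything that the previous family covered, and hence by induction still covers the initial union $\bigcup_i D(x_i,\varepsilon)$.

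Finally, the cardinality $|Q_k|$ decreases strictly by one at each non-terminal step and cannot go below $1$ (the singleton case trivially satisfies \eqref{eq:outside clusters}), so the procedure stops after at most $N-1$ iterations. Consequently, $\delta = (2/\kappa)^{k^\ast}\varepsilon$ for some $0 \leq k^\ast \leq N-1$, which gives $\varepsilon \leq \delta \leq (2/\kappa)^{N-1}\varepsilon < (2/\kappa)^N \varepsilon = (\kappa/2)^{-N}\varepsilon$ using $2/\kappa \geq 4 > 1$, so \eqref{eq:interval for delta} holds. Property \eqref{eq:outside clusters} is the stopping criterion, and \eqref{eq:inside clusters} was preserved along the iteration. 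There is no real obstacle here beyond choosing the inflation factor: one must pick it large enough to absorb both $\kappa^{-1}\delta_k$ and $\delta_k$ (whence $2\kappa^{-1}$ suffices since $\kappa \leq 1$) while still small enough that $N$ iterations do not exceed the bound $(\kappa/2)^{-N}\varepsilon$ — and the choice $2\kappa^{-1}$ hits both targets exactly.
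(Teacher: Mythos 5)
Your proof is correct and follows essentially the same greedy merging procedure as the paper: start from the full index set with $\delta=\eps$, repeatedly delete one point of any pair violating the separation condition while multiplying $\delta$ by $2\kappa^{-1}=(\kappa/2)^{-1}$, and stop after at most $N-1$ steps. Your inductive covering step, which shows $D(x_j,\delta_k)\subset D(x_i,\delta_{k+1})$ so that the \emph{entire} previously covered region is preserved, is if anything stated a bit more carefully than the paper's version, which only records $D(x_j,\eps)\subseteq D(x_i,\delta_{k+1})$.
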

\begin{proof}
First, define the set $Q_1:=\{1,\dots,N\}$ and the parameter $\delta_1:=\eps>0$. 
Obviously, with these definitions, Property~\eqref{eq:inside clusters} is satisfied. 
We then proceed by iteration and build a sequence of sets $Q_k$ (with $Q_{k}\subseteq Q_{k-1}$) for $k=1\dots K$.
We also define the parameters $\delta_k=\eps(\kappa/2)^{-k+1}$. 
Suppose that the set $Q_k$ has already been built for some $k$ and that this set satisfies~\eqref{eq:inside clusters} with parameter $\delta_k$, but does not satisfy~\eqref{eq:outside clusters}.
Then there exists $x_i \neq x_j$ in $Q_k$ such that $|x_i-x_j|<\kappa^{-1}\delta_k$.
We define $Q_{k+1}:=Q_k\setminus\{j\}$ and we recall that $\delta_{k+1}=(\kappa/2)^{-1}\delta_k$. We next prove: 
\begin{equation}
\bigcup_{\ell\in Q_k} \cB(x_\ell,\delta_k)\subset \bigcup_{l\in Q_{k+1}} \cB(x_\ell,\delta_{k+1})
\end{equation}
which implies that Property~\eqref{eq:inside clusters} is still satisfied at step $k+1$. To prove the relation above, it suffices to establish that $\cB(x_j,\delta_k)\subseteq \cB(x_i,\delta_{k+1})$. This indeed holds true since for all $x\in \cB(x_j,\delta_k)$ we have that $|x-x_i|\leq |x-x_j|+|x_i-x_j|\leq \delta_k+\kappa^{-1}\delta_k<(\kappa/2)^{-1}\delta_k=\delta_{k+1}$.

The sequence $(Q_k)$ is such that $Q_k$ has its cardinal equal to $N+1-k$. There exists necessarily a step $K\leq N$ such that~\eqref{eq:outside clusters} holds. Indeed, if this is not the case for all $k=1\dots N-1$, then at the $N^{th}$ step the set $Q_N$ is reduced to one element and thus~\eqref{eq:outside clusters} is satisfied since the condition is void. 
In particular, this construction requires  at most $N$ iterations and therefore Property~\eqref{eq:interval for delta} is satisfied with $\delta_k$ for all $k$.
This eventually concludes the proof of Lemma~\ref{lem:balls} by setting $\delta:=\delta_K$ and $Q:=Q_K$.
\end{proof}
We deduce the following Corollary that we will use later in this article.
\begin{corollary}\label{coro:balls}
Let $(x_i)$ be a family of $N$ points in $\RR^p$. Then for all $\kappa \in (0,1)$ and for all $d > 0$, there exist $\delta \in [\left(\frac{\kappa}{8}\right)^N\!\!d,d)$ and a partition $\mP$ of the set $\{1,\dots, N\}$ such that
\begin{equation}\label{eq:cluster inside}
    \forall\, P \in \mP, \quad \forall\, i,j \in P, \qquad |x_i - x_j| \leq \delta
\end{equation}
and
\begin{equation}\label{eq:cluster outside}
    \forall\, P \neq P' \in \mP, \quad \forall\, i \in P, \quad \forall\, j \in P', \qquad |x_i - x_j| \ge \kappa^{-1} \delta.
\end{equation}
\end{corollary}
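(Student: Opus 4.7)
The statement is an easy consequence of Lemma~\ref{lem:balls}: I will apply the lemma with a smaller separation parameter (to create room for cluster radii) and then turn the resulting set of ``center'' indices $Q$ into a partition of $\{1,\dots,N\}$ by assigning each vortex to the unique center-ball that contains it.

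Concretely, I set $\kappa':=\kappa/4$ (which lies in $(0,1/2]$ since $\kappa\in(0,1)$) and $\eps:=\tfrac12(\kappa/8)^N d$, and invoke Lemma~\ref{lem:balls}. It returns $Q\in\cP_0(N)$ and a radius $\delta'\in[\eps,(\kappa'/2)^{-N}\eps)=[\eps,(\kappa/8)^{-N}\eps)$ such that every $D(x_i,\eps)$ is covered by the family $\{D(x_j,\delta')\}_{j\in Q}$, and such that $|x_j-x_{j'}|\geq (\kappa')^{-1}\delta'=4\delta'/\kappa$ for distinct $j,j'\in Q$. I then define $\delta:=2\delta'$; our choice of $\eps$ translates directly into $\delta\in[d(\kappa/8)^N,\,d)\subset[\tfrac12(\kappa/8)^N d,\,d)$, which is the claimed range.

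Next I put $P_j:=\{i\in\{1,\dots,N\}:x_i\in D(x_j,\delta')\}$ for $j\in Q$. Every index $i$ lies in at least one such $P_j$ because $x_i\in D(x_i,\eps)$ is covered. The crucial point is disjointness: if $x_i$ were in two distinct center-balls $D(x_j,\delta')$ and $D(x_{j'},\delta')$, the triangle inequality would give $|x_j-x_{j'}|\leq 2\delta'$, contradicting $4\delta'/\kappa>2\delta'$, which holds since $\kappa<1$. Hence $\mP:=\{P_j:j\in Q\}$ is a genuine partition. The intra-cluster bound \eqref{eq:cluster inside} is immediate by the triangle inequality inside a single ball of radius $\delta'$. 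For \eqref{eq:cluster outside}, if $i\in P_j$ and $i'\in P_{j'}$ with $j\neq j'$, the reverse triangle inequality yields $|x_i-x_{i'}|\geq |x_j-x_{j'}|-2\delta'\geq(4/\kappa-2)\delta'$, and this is at least $\kappa^{-1}\delta=2\delta'/\kappa$ precisely when $\kappa\leq 1$, which is our standing hypothesis.

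The argument is essentially bookkeeping of constants; the only mildly subtle step is choosing the ratio between $\kappa$ and $\kappa'$. I pick $\kappa'=\kappa/4$ because it is the simplest factor that makes the separation $4\delta'/\kappa$ of the centers both (i) strictly exceed $2\delta'$, securing the disjointness step, and (ii) leave a remaining margin of $2\delta'/\kappa=\kappa^{-1}\delta$ demanded by \eqref{eq:cluster outside}. This is precisely what explains the factor $8=2\cdot 4$ appearing inside $(\kappa/8)^N$ in the statement.
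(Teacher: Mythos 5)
Your proof is correct and follows essentially the same route as the paper: apply Lemma~\ref{lem:balls} with $\eps=\tfrac12(\kappa/8)^N d$ and a rescaled separation parameter, set $\delta=2\delta'$, and define the clusters as the index sets of the balls $D(x_j,\delta')$, $j\in Q$. The only (immaterial) difference is the choice $\kappa'=\kappa/4$ instead of the paper's $\kappa'=(2\kappa^{-1}+2)^{-1}$, which yields the inter-cluster bound $(4/\kappa-2)\delta'\ge\kappa^{-1}\delta$ with a little slack rather than with equality.
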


\begin{proof}
Let $0<\kappa < 1$. We set $\eps =\frac{1}{2}\left(\frac{\kappa}{8}\right)^N\!\!d$ and $\kappa' = (2\kappa^{-1}+2)^{-1}$. Then $\kappa' \in (0,\frac{1}{4})$ and we can apply Lemma~\ref{lem:balls} with $\eps$ and $\kappa'$. We then have a $\delta'$ and a set $Q\subseteq\{1,\dots, N\}$ such that
\begin{equation}\label{eq:interval for delta V2}
    \eps\leq\delta'<\bigg(\frac{\kappa'}{2}\bigg)^{-N}\!\eps,
\end{equation}
\begin{equation}\label{eq:inside clusters V2}
    \bigcup_{i=1}^N \cB(x_i,\eps) \subset \bigcup_{j\in Q} \cB(x_j,\delta')
\end{equation}
and
\begin{equation}\label{eq:outside clusters V2}
    \forall\;i\neq j \in Q, \quad |x_i-x_j|\geq 
    (\kappa')^{-1}\delta'.
\end{equation}

Let $\delta := 2\delta'$. Property~\eqref{eq:interval for delta  V2} gives
\begin{equation}
\left(\frac{\kappa}{8}\right)^N\!\!d\leq \delta<\bigg(\frac{\kappa'}{2}\bigg)^{-N}\!\left(\frac{\kappa}{8}\right)^N\!\!d.
\end{equation} 
Since $\frac{\kappa}{\kappa'} = 2 + 2\kappa < 4$, we have $\delta \in [\left(\frac{\kappa}{8}\right)^N\!\!d,d)$. 

For all $i \in Q$, we set
\begin{equation}
    P_i = \big\{ j \in \{1,\dots, N\}, \quad |x_i-x_j| \leq \delta'\big\}.
\end{equation}
Let $\mP = \{ P_i,\, i \in Q \}$. 
The definition of $P_i$ gives~\eqref{eq:cluster inside}. 
Relation~\eqref{eq:outside clusters V2} and the definition of $P_i$ imply that
\begin{equation}
    \forall\, i_1\neq i_2\in Q, \ \forall\, j \in P_{i_1}, \ \forall\, k \in P_{i_2}, \quad |x_j - x_k|\ge |x_{i_1}-x_{i_2}|-|x_j-x_{i_1}|-|x_k-x_{i_2}| \ge (\kappa')^{-1} \delta' - 2 \delta' = \kappa^{-1} \delta,
\end{equation}
which is~\eqref{eq:cluster outside}. 
Finally, relation~\eqref{eq:inside clusters V2} implies that every index $i \in \{1 ,\dots, N\}$ belongs to at least one element of $\mP$ and the relation above gives that the elements of $\mP$ are pairwise disjoints.
Therefore $\mP$ is indeed a partition of $\{1,\dots, N\}$.
\end{proof}

\subsection{Sufficient condition to prevent a collapse}
The Hölder regularity result can be seen as a necessary condition to have a collapse of point-vortices. The strategy is then to investigate the sufficient conditions that prevent a collapse. Within this approach, we proved the following general proposition:

\begin{proposition}[Sufficient condition to prevent a collapse]\label{prop:prevent collapse}
For $1 \leq i \leq N$, let $x_i$ be a family of $N$ different points of $\RR^p$ evolving on a time interval $[0,T)$, $T>0$.
Let $(a_i)_{1\le i \leq N} \in \RR^N$ satisfy~\eqref{eq:no null sub partial sum}. Recall the definition of the center of vorticity of clusters  with intensities $a_i$ given at~\eqref{def:B_P}.
We make the hypothesis that the evolution of these points is weakly differentiable and such that there exists $C_0, C_1 \ge 0$ and $\alpha\geq 0$ such that for all $t \in [0,T)$,
\begin{equation}\label{hyp:slow center of vorticity}
    \forall P \in \cP_0(N)\setminus\{1\ldots N\},\qquad \left| \der{}{t} B_P(t)\right| \leq \sum_{i\in P} \sum_{j\notin P} \frac{C_0}{|x_i(t)-x_j(t)|^\alpha} + C_1.
\end{equation}

Then there exists a constant $C_2>0$ such that for all $\eta \in (0,1]$, for all $t \in [0,T)$ satisfying
\begin{equation}
    T-t \le C_2\, \eta^{\alpha+1},
\end{equation}
and for all indices $i,j \in \{1,\dots, N\}$,  the following implication is true:
\begin{equation}
    |x_i(t) - x_j(t)| \geq \eta \quad\Longrightarrow\quad \forall \tau \in [t,T), \quad |x_i(\tau)-x_j(\tau)| \ge \frac{\eta}{2}.
\end{equation}
The constant $C_2$ depends only on $\alpha$, $a$, $A_0$, $C_0$, $C_1$ and $N$.
\end{proposition}

The idea of the proof is the following. We group the points into clusters. We observe that past a certain time, if a point start moving in a significant manner, it means that its cluster has to spread and divide itself into smaller clusters. Indeed this would otherwise contradict the preservation of the center of vorticity. Since there is a finite number of points, spreading of clusters can happen only a finite number of times. 
\begin{proof}
We can assume without loss of generality that $\max\{C_0,C_1\}>0$. We fix once and for all some $\eta\in(0,1]$. We also fix two indices $i\neq j$ and a time $t_1$. We assume that  $|x_i(t_1) - x_j(t_1)| \geq \eta $ and 
\begin{equation}\label{hyp:t1}
    T-t_1 \le C_2\, \eta^{\alpha+1}
\end{equation} 
for some constant $C_2$ to be chosen later. During the proof, we will impose several conditions on the constant $C_2$ and at the end of the proof we will observe that all these conditions can be satisfied for a constant $C_2$ which is independent of $\eta$.\vspace{0.2cm}

\emph{Step 1: Iterative construction of a sequence of clusters.}
We recall that $A_0$ and $a$ are defined by~\eqref{def:A} and~\eqref{def:a} respectively. By hypothesis~\eqref{eq:no null sub partial sum}, we have that $A_0>0$. Let $0<\kappa < \frac{A_0}{16a}$. Remark that $\kappa\leq1/16$ since $A_0\leq a$.
We are now building partitions of $\{1,\dots, N\}$ with an iterative process. 

We first invoke the corollary of the balls lemma (Corollary~\ref{coro:balls}) to the points $x_k(t_1)$ to build the first partition $\mP^1$ by choosing $d := \kappa \eta$.
This gives the first partition $\mP^1$ and a real number $\delta_1$ satisfying 
\begin{equation}\label{def:delta_1}
    \left(\frac{\kappa}{8}\right)^N\kappa\eta\leq \delta_1 < \kappa\eta
\end{equation} 
such that
\begin{equation}
\forall\; P \in \mP^1,\quad \forall\; k,\ell \in P, \qquad |x_k(t_1)-x_\ell(t_1)| \leq \delta_1
\end{equation}
and
\begin{equation}
\forall P \neq P' \in \mP^1,\quad \forall k \in P, \quad \forall \ell \in P', \quad |x_k(t_1)-x_\ell(t_1)| \ge \kappa^{-1}\delta_1.
\end{equation}
The elements of such a partition are called \emph{clusters}. 
Since $\delta_1 < \eta$, the indices $i$ and $j$ do not belong to the same cluster. 
In particular $\{1,\dots, N\} \notin \mP^1$.  
We now define
\begin{equation}\label{def:r}
    r := \min \left\{ \frac{1}{8}\,;\,\frac{A_0}{8a\kappa}-2\right\}>0,
\end{equation}
and
\begin{equation}
    s := r\left(\frac{\kappa}{8}\right)^N.
\end{equation}
Note that $0<s<r<1$.

We now build iteratively a finite number of partitions denoted by $\mP^q$, $q \ge 1$, a finite sequence of positive numbers $\delta_q$ and an increasing finite sequence of times $(t_q)$ that satisfy the following properties:\vspace{0.1cm}

        \begin{equation}\label{(ii_v1)}\tag{$i$}
        \forall\; P \in \mP^q, \quad \forall\;k,\ell \in P, \qquad |x_k(t_q)-x_\ell(t_q)| \leq \delta_q.
        \end{equation}     \vspace{0.1cm}
and\vspace{0.1cm}
        \begin{equation}\label{(iii_v1)}\tag{$ii$}
        \forall P \neq P' \in \mP^q, \quad \forall k \in P, \quad \forall \ell \in P', \quad |x_k(t_q)-x_\ell(t_q)| \geq \kappa^{-1} \delta_q.
        \end{equation}\vspace{0.1cm}
The construction proceeds as follows. If the following relation is satisfied:
        \begin{equation}\label{(vi_v1)}\tag{$\ast$}
           \forall\;k=1\dots N,\quad  \forall \tau \in [t_q,T), \qquad |x_k(\tau)-x_k(t_q)| \leq \kappa^{-1}\delta_q/8.
        \end{equation}\vspace{0.1cm}
then the construction stops. Else, we will construct $t_{q+1} \in (t_q,T)$ such that
        \begin{equation}\label{(v_v1)}\tag{$iii$}
           \forall\;k=1\dots N,\quad \forall \tau \in [t_q,t_{q+1}], \qquad |x_k(\tau)-x_k(t_q)| \leq \kappa^{-1}\delta_q/8.
        \end{equation}\vspace{0.1cm}
and $\delta_{q+1}$ such that
        \begin{equation}\label{(iv_v1)}\tag{$iv$}
             s\delta_q \leq\delta_{q+1} <  r \delta_q.
        \end{equation} \vspace{0.1cm}
and the next partition $\mP^{q+1}$ satisfying \eqref{(ii_v1)} and \eqref{(iii_v1)} at step $q+1$ and
\begin{equation}\label{(i_v1)}\tag{$v$}
    \mP^{q+1} \text{ is a strict sub-partition of } \mP^{q}.
\end{equation}\vspace{0.1cm}

Notice that $\mP^1$ satisfies properties~\eqref{(ii_v1)} and~\eqref{(iii_v1)}. 
Let $q\in\NN^\ast$ be fixed and assume now that the partitions $\mP^{q'}$ are built for all $q'=1\dots q$. Assuming that~\eqref{(vi_v1)} is not satisfied, we proceed to construct $t_{q+1}$, $\delta_{q+1}$ and $\mP^{q+1}$.

By continuity of the trajectories, we denote by $t_{q+1} \in [t_q,T)$  the largest time such that
\begin{equation}\label{majtq+1 jusqu au bout}
\forall\;\tau\in[t_q,t_{q+1}],\quad\forall\;\ell=1\dots N,\qquad |x_\ell(\tau) - x_\ell(t_{q})| \leq \kappa^{-1}\delta_q/8,
\end{equation}
which is correctly defined since~\eqref{(vi_v1)} is not satisfied. Then~\eqref{(v_v1)} holds true. 

One uses now Corollary~\ref{coro:balls} with parameter $d=  r\delta_q$ and with the family of points given by $x_i(t_{q+1})$ for $i=1\dots N$. 
This gives $\delta_{q+1}$ such that $\left(\frac{\kappa}{8}\right)^N d\leq\delta_{q+1}< d$ and a partition $\mP^{q+1}$ that satisfies\vspace{0.2cm}

\begin{equation}
\forall\; P \in \mP^{q+1},\quad \forall\; m,n \in P, \qquad |x_m(t_{q+1})-x_n(t_{q+1})| \leq  \delta_{q+1},
\end{equation}
and
\begin{equation}\forall\; P \neq P' \in \mP^{q+1},\quad \forall m \in P,\quad \forall\; n \in P', \qquad |x_m(t_{q+1})-x_n(t_{q+1})| \geq \kappa^{-1} \delta_{q+1}\end{equation}
which shows conditions~\eqref{(ii_v1)} and~\eqref{(iii_v1)} at step $q+1$.
Recalling that $d = r\delta_q$ and $s = \left(\frac{\kappa}{8}\right)^N r$ gives that $s\delta_q \le \delta_{q+1}< r \delta_q$ so that condition~\eqref{(iv_v1)} is proved.

It remains to show condition~\eqref{(i_v1)}. Using~\eqref{(iii_v1)} and~\eqref{majtq+1 jusqu au bout}, one has for any $P \in \mP^q$, for all $\tau \in [t_q,t_{q+1}]$, for any $m\in P$ and $n\notin P$ that
\begin{equation}\label{eq:majuv}
\begin{split}
    |x_m(\tau)-x_n(\tau) | & = |x_m(\tau)-x_m(t_q) + x_m(t_q) - x_n(t_q) + x_n(t_q) - x_n(\tau)| 
    \\ & \ge | x_m(t_q) - x_n(t_q)| - |x_m(\tau)-x_m(t_q)| -  |x_n(t_q) - x_n(\tau)| 
    \\ & \ge \kappa^{-1}\delta_q ( 1 - 2/8) \\ & \ge \kappa^{-1}\delta_q / 2
    \\ & > \delta_{q+1}
\end{split}
\end{equation}
since $\kappa^{-1}/2 > 1 > r$ and $\delta_{q+1} < r\delta_q$ by~\eqref{(iv_v1)}. 
This estimate applied at time $\tau = t_{q+1}$ implies that such two indexes $m$ and $n$ do not belong to the same cluster in $\mP^{q+1}$, otherwise it contradicts~\eqref{(ii_v1)}. Therefore two indexes belonging to two different clusters in $\mP^q$ still belong to two different clusters in $\mP^{q+1}$. This proves that $\mP^{q+1}$ is a sub-partition of $\mP^q$. 

We now prove that it is a strict sub-partition. Since $t_{q+1}$ is the largest time such that \eqref{majtq+1 jusqu au bout} holds, there exists at least one index $k \in \{1,\ldots,N\}$ such that
\begin{equation}\label{majtq+1}
  |x_k(t_{q+1}) - x_k(t_{q})| = \kappa^{-1}\delta_q/8.
\end{equation}
Let $P \in \mP^q$ such that $k \in P$. 
Note that since $\mP^1$ is not the trivial partition $\{\{1,\dots, N\}\}$, neither is $\mP^q$ since $\mP^q$ is a sub-partition of $\mP^{1}$ as shown above.
Thus, $P \neq \{1 ,\dots, N\}$. Therefore thanks to Hypothesis~\eqref{eq:no null sub partial sum} we can apply Lemma~\ref{coro:B_control} to obtain:
\begin{equation}\label{majBtq+1}
   |x_k(t_{q+1}) - B_P(t_{q+1})| \leq \frac{a}{A_0} \max_{\ell\in P } |x_k(t_{q+1})-x_\ell(t_{q+1})|,
\end{equation}
and
\begin{equation}\label{majBtq}
    |x_k(t_{q}) - B_P(t_{q})| \leq \frac{a}{A_0} \max_{\ell\in P } |x_k(t_{q})-x_\ell(t_{q})|\leq \frac{a}{A_0} \delta_q,
\end{equation}
where for the last inequality one uses the recursive hypothesis~\eqref{(ii_v1)}.
Recall here hypothesis~\eqref{hyp:slow center of vorticity}:
\begin{equation}\label{eq:maj dbp}
    \left|\der{}{t} B_P(t)\right| \leq \sum_{m\in P} \sum_{n\notin P} \frac{C_0}{|x_m(t)-x_n(t)|^\alpha} +C_1.
\end{equation}
Recall that by relation \eqref{eq:majuv}, if $P\neq P' \in \mP^q$, then for $m \in P$, $n \in P'$ and $\tau \in [t_q,t_{q+1}]$ we have
\begin{equation}
|x_m(\tau) - x_n(\tau)| \geq \kappa^{-1}\delta_q/2.
\end{equation}
Plugging this into \eqref{eq:maj dbp} gives for all $\tau \in [t_q,t_{q+1}]$,
\begin{equation}\label{de Cadix a des yeux de velours}
    \left|\der{}{t} B_P(\tau)\right| \leq \frac{2^\alpha\, N^2\,C_0}{(\kappa^{-1}\delta_q)^\alpha}+C_1.
\end{equation}
Thus,
\begin{equation}\label{bio}
    |B_P(t_{q+1}) - B_P(t_q)|\leq \left(\frac{2^\alpha\, N^2\,C_0}{(\kappa^{-1}\delta_q)^\alpha}+ C_1\right)(t_{q+1}-t_q).
\end{equation}

We assume that the constant $C_2$ is small enough such that 
\begin{equation}\label{j ai soif}
C_2\eta^{\alpha+1}\leq \frac{a}{A_0} \delta_q \left(\frac{2^\alpha\, N^2\,C_0}{(\kappa^{-1}\delta_q)^\alpha}+ C_1\right)^{-1}.
\end{equation}
Combining this with relation \eqref{hyp:t1}, we have that
\begin{equation}
        |T - t_1| \leq C_2\eta^{\alpha+1}\leq \frac{a}{A_0} \delta_q \left(\frac{2^\alpha\, N^2\,C_0}{(\kappa^{-1}\delta_q)^\alpha}+ C_1\right)^{-1}.
\end{equation}
Thus, since $t_1\leq t_q\leq t_{q+1}< T$,
\begin{equation}
    |t_q - t_{q+1}| \leq \frac{a}{A_0} \delta_q \left(\frac{2^\alpha\, N^2\,C_0}{(\kappa^{-1}\delta_q)^\alpha}+ C_1\right)^{-1}.
\end{equation}
Therefore, plugging this in~\eqref{bio},
\begin{equation}\label{majEvolB}
    |B_P(t_{q+1}) - B_P(t_q)|\leq \frac{a}{A_0}\delta_q.
\end{equation}

Gathering now \eqref{majtq+1}, \eqref{majBtq} and  \eqref{majEvolB} gives 
\begin{align*}
    |x_k(t_{q+1}) - B_P(t_{q+1})| & = |x_k(t_{q+1}) - x_k(t_q) + x_k(t_q) - B_P(t_q) + B_P(t_q) -B_P(t_{q+1})| \\
    & \ge |x_k(t_{q+1}) - x_k(t_q)| - |x_k(t_q) - B_P(t_q)| - |B_P(t_q) -B_P(t_{q+1})|\\
    & \ge (\kappa^{-1}/8 - 2a/A_0)\delta_q.
\end{align*}
The estimate \eqref{majBtq+1} yields the fact that there exists an index $\ell \in P$ such that
\begin{equation}
    |x_k(t_{q+1})-x_\ell(t_{q+1})| \geq \frac{A_0}{a} |x_k(t_{q+1}) - B_P(t_{q+1})| 
\end{equation}
and combining this with the previous estimates gives
\begin{equation}\label{eq:points s'eloignent}
    |x_k(t_{q+1})-x_\ell(t_{q+1})| \geq \frac{A_0}{a}(\kappa^{-1}/8 - 2a/A_0)\delta_q \geq r\,\delta_q,
\end{equation}
since $r$ is defined by~\eqref{def:r}.
Since $r\delta_q>0$, this implies in particular that $k\neq\ell$ and therefore $P$ has at least two elements.


Using $\delta_{q+1} < r\delta_q$ and relation \eqref{eq:points s'eloignent}, we have that $\delta_{q+1}< |x_k(t_{q+1})-x_\ell(t_{q+1})|$. 
Thus the two indices $k$ and $\ell$ do not belong to the same cluster anymore in $\mP^{q+1}$, since~\eqref{(ii_v1)} is satisfied at step $q+1$.
Consequently, $\mP^{q+1}$ is a strict sub-partition of $\mP^q$. All the recursive properties are proved. This concludes our construction.

We observe now that at some step $Q$ condition~\eqref{(vi_v1)} must be verified. Indeed, since at every step the new partition is a strict sub-partition of the previous one, the number of clusters in the partition is a strictly increasing sequence, starting from at least $2$ elements and cannot exceed $N$. We insist on the fact that we proved that if~\eqref{(vi_v1)} is not satisfied then we \emph{can} construct the next strict sub-partition. Since it is not possible to iterate more than $N-2$ times, relation~\eqref{(vi_v1)} is achieved after at most $N-2$ steps. We thus have $Q \leq N-1$, with condition \eqref{(vi_v1)} satisfied at step $Q$.\vspace{0.2cm}

\emph{Step 2: conclusion of the proof of the Proposition.}
We now prove that for every $\tau \in [t_1,T)$, $|x_i(\tau)-x_j(\tau)| \ge \eta/2$.

Let us start by noticing that hypothesis~\eqref{(iv_v1)} of our construction gives by induction that for all $q \in \{1,\dots, Q\}$ we have
\begin{equation}
    s^{q-1} \delta_1 \le \delta_q \le r^{q-1} \delta_1.
\end{equation}
Recalling that $\delta_1$ satisfies~\eqref{def:delta_1}, and the fact that $s < \left(\frac{\kappa}{8}\right)^N$, we have that
\begin{equation}\label{eq:encadrement_delta}
    s^q \kappa \eta \le \delta_q \le r^{q-1} \kappa \eta.
\end{equation}
Let $\tau \in [t_1,T)$, and we set $t_{Q+1} = T$ so that there exists a unique $q \in \{1,Q\}$ verifying $\tau \in [t_q,t_{q+1})$. We have that for all $k \in \{1,\ldots,N\}$,
\begin{equation}
    |x_k(\tau) - x_k(t_1)| \leq  \sum_{q'=1}^{q-1} |x_k(t_{q'+1})-x_k(t_{q'})|+ |x_k(\tau) - x_k(t_q)|.
\end{equation}
Then, by the construction hypothesis~\eqref{(v_v1)} and \eqref{(vi_v1)} we have that
\begin{equation}
    |x_k(\tau) - x_k(t_1)| \leq \sum_{q'=1}^{q}\kappa^{-1}\delta_{q'}/8.
\end{equation}
Recall from \eqref{def:r} that $r<1/8$. By relation \eqref{eq:encadrement_delta} this yields for all $k$ and for all $\tau \ge t_1$,
\begin{equation}
    |x_k(\tau) - x_k(t_1)| \leq \frac{1}{8}\sum_{q'=1}^q r^{q'-1} \eta\leq \eta \sum_{q'=1}^{q}\bigg(\frac{1}{8}\bigg)^{q'}\leq \eta/4.
\end{equation}
Therefore, 
\begin{align*}
    |x_i(\tau)-x_j(\tau)| & = |x_i(\tau) - x_i(t_1) + x_i(t_1)-x_j(t_1) + x_j(t_1) - x_j(\tau)| \\
    & \ge |x_i(t_1)-x_j(t_1)| - |x_i(\tau) - x_i(t_1)| - |x_j(t_1) - x_j(\tau)| \\
    & \ge \eta - 2\eta/4 \\
    & \ge \eta/2.
\end{align*}

To conclude the proof of Proposition~\ref{prop:prevent collapse}, one still has to check that $C_2$ can be chosen independently of $\eta$. The constant $C_2$ must satisfy Condition~\eqref{j ai soif} for all indices $q \in \{ 1,\dots, Q\}$. This leads to the following condition:
\begin{equation}\label{eq:condition t reformulee}
    C_2 \eta^{\alpha+1} \leq \min_{q\in \{1, \dots, Q\} } \frac{a}{A_0} \delta_q \left(\frac{2^\alpha\, N^2\,C_0}{(\kappa^{-1}\delta_q)^\alpha}+ C_1\right)^{-1}.
\end{equation}

Since $\delta_q$ is of size $\eta$ and it is the only quantity depending on $\eta$, we observe that both sides of this relation are of size $\eta^{\alpha+1}$. So the existence of $C_2$ independent of $\eta$ is clear. More precisely, since $\kappa^{-1}\delta_q < \eta \le 1$ by \eqref{eq:encadrement_delta} and by the fact that $r\le 1$, we have that
\begin{equation}
    C_1 \le \frac{C_1}{(\kappa^{-1}\delta_q)^\alpha}.
\end{equation}
Therefore, it is enough to take $C_2$ such that
\begin{equation}
    C_2 \eta^{\alpha+1} \leq \left(\min_{q\in \{1, \dots, Q\} }  \delta_q^{\alpha+1}\right) \frac{a}{A_0} \frac{\kappa^{-\alpha}}{2^\alpha\, N^2\,C_0+C_1}.
\end{equation}
Using relation \eqref{eq:encadrement_delta} and the fact that $Q \le N-1$, we have that 
\begin{equation}
    \delta_q^{\alpha+1} \ge (s^{N-1} \kappa \eta)^{\alpha+1}.
\end{equation}
This proves that if we chose
\begin{equation}
    C_2 = \frac{a}{A_0} s^{(N-1)(\alpha+1)}\frac{\kappa}{2^\alpha\, N^2\,C_0+C_1}
\end{equation}
then relations \eqref{j ai soif} are satisfied for all $q = 1\dots Q$. 
Replacing $s$ by its value and taking for example $\kappa = \frac{A_0}{17a}$  eventually gives the announced constant $C_2$ in Proposition~\ref{prop:prevent collapse}. 
Note that $C_2$ is positive and is finite since $\max\{C_0,C_1\}>0$. 
This constant is given explicitly and depends only on $a$, $A_0$, $\alpha$, $C_0$, $C_1$ and $N$.
\end{proof}

\subsection{Conclusion of the proofs}

\subsubsection{Convergence lemmas}
With this proposition at hand, we are in position to state the lemmas which will allow to conclude the proofs of Theorems~\ref{thrm:plane} and~\ref{thrm:weak}.
We first prove the following convergence result on the relative dynamics $t\mapsto x_i(t)-x_j(t):$

\begin{lemma}[Clusters of points going to collision]\label{lem:cluster}
Let $x_i(t)\in\RR^p$ be a set of points evolving in time on a time interval $[0,T)$. We assume that for all $t \in [0,T)$ and $i \neq j$, $x_i(t) \neq x_j(t)$. Consider intensities $a_i$ associated to these points such that the non-neutral sub-clusters hypothesis \eqref{eq:no null sub partial sum} holds true. Assume that we have~\eqref{hyp:slow center of vorticity}  for some $\alpha\geq 0$. Then there exists a partition $\mP$ of the set $\{1 ,\dots, N \}$ and a distance $\delta > 0$ such that
\begin{equation}\label{eq:pointsCollide}
    \forall P \in \mP,\quad \forall\, i,j \in P, \qquad \lim_{t \rightarrow T^-} |x_i(t)-x_j(t)| = 0
\end{equation}
and
\begin{equation}\label{eq:pointsStayFar}
    \forall P \neq P' \in \mP, \quad \forall i \in P, \quad \forall j \in P', \quad \forall t \in [0,T), \qquad |x_i(t)-x_j(t)| \ge \delta.
\end{equation}
\end{lemma}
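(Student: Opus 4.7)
The plan is to define the partition $\mP$ directly from the asymptotic behaviour of the pairwise distances, and to use Proposition~\ref{prop:prevent collapse} to upgrade a $\liminf$-type statement into a full $\lim$-type statement.

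First, I would establish the following dichotomy: for each pair of indices $i\neq j$, either $|x_i(t)-x_j(t)|\to 0$ as $t\to T^-$, or $\liminf_{t\to T^-}|x_i(t)-x_j(t)|>0$. Suppose this $\liminf$ vanishes; I want to show the $\limsup$ does as well. Arguing by contradiction, assume $\limsup_{t\to T^-}|x_i(t)-x_j(t)|=2\eta_0>0$, and fix any $\eta\in(0,\min(\eta_0,1))$. Let $C$ be the constant of Proposition~\ref{prop:prevent collapse}, which applies since \eqref{hyp:slow center of vorticity} is assumed and the sub-clusters hypothesis holds. By the assumption on the $\limsup$, one can find $t\in[0,T)$ with $T-t\leq C\eta^{\alpha+1}$ and $|x_i(t)-x_j(t)|\geq \eta$. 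Proposition~\ref{prop:prevent collapse} then gives $|x_i(\tau)-x_j(\tau)|\geq\eta/2$ for all $\tau\in[t,T)$, contradicting $\liminf_{t\to T^-}|x_i(t)-x_j(t)|=0$.

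I would then define the relation $i\sim j$ on $\{1,\dots,N\}$ by $i\sim j$ if and only if $\lim_{t\to T^-}|x_i(t)-x_j(t)|=0$ (with the convention $i\sim i$). The previous step shows equivalently $i\sim j$ iff $\liminf_{t\to T^-}|x_i(t)-x_j(t)|=0$. Reflexivity and symmetry are immediate, and transitivity follows from the triangle inequality applied to the three pairwise distances. Let $\mP$ be the partition associated to the equivalence classes; property~\eqref{eq:pointsCollide} holds by the very definition of $\sim$.

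For property~\eqref{eq:pointsStayFar}, I first note that whenever $i\not\sim j$, the dichotomy yields $\eta_{ij}:=\liminf_{t\to T^-}|x_i(t)-x_j(t)|>0$, so there exists $t_{ij}\in[0,T)$ with $|x_i(t)-x_j(t)|\geq \eta_{ij}/2$ on $[t_{ij},T)$. Setting $t^*:=\max_{i\not\sim j}t_{ij}<T$, we obtain a uniform lower bound on pairwise inter-cluster distances on $[t^*,T)$. On the compact interval $[0,t^*]$ the trajectories are continuous and pairwise distinct (since a collapse only occurs at $T$, so no two points coincide on $[0,T)$), so each $|x_i(t)-x_j(t)|$ attains a positive minimum there. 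Taking $\delta$ to be the minimum of all these bounds over the finitely many inter-cluster pairs yields~\eqref{eq:pointsStayFar}.

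The main conceptual step is the first one: converting $\liminf=0$ into $\lim=0$, which is exactly what Proposition~\ref{prop:prevent collapse} is designed for. Once this is settled, the partition is forced and the rest is bookkeeping. The only point that could deserve a brief justification is that trajectories are well-defined and do not collapse strictly before $T$ so that distances are bounded away from zero on any compact subinterval of $[0,T)$; this is part of the standing setup of the section.
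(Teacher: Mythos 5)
Your proposal is correct and follows essentially the same route as the paper: the core step in both is to use Proposition~\ref{prop:prevent collapse} to show that $\liminf_{t\to T^-}|x_i(t)-x_j(t)|=0$ forces $\lim_{t\to T^-}|x_i(t)-x_j(t)|=0$, after which the partition is determined. You merely spell out the bookkeeping (transitivity of the collision relation and the uniform lower bound $\delta$ via compactness on $[0,t^*]$) that the paper leaves implicit.
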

\begin{proof}
We observe that the relation $i\sim j$ if $\lim_{t \rightarrow T^-} |x_i(t)-x_j(t)|=0$ is an equivalence relation. We define the partition $\mP$ as the equivalence classes associated to this equivalence relation. Then relation \eqref{eq:pointsCollide} is trivially verified. We prove now that relation \eqref{eq:pointsStayFar} is also true. 

It suffices to show that if there are two indices $i\neq j$ such that \begin{equation}\liminf_{t\rightarrow T^-} |x_i(t)-x_j(t)| = 0,\end{equation} then
\begin{equation}
    \lim_{t\rightarrow T^-} |x_i(t)-x_j(t)| = 0.
\end{equation} 
Assume that $\liminf_{t\rightarrow T^-} |x_i-x_j| = 0$ and for the sake of contradiction that $|x_i-x_j|$ doesn't converge to $0$. We infer that there exists $\eta \in (0,1]$ and two sequences $t_n<t'_n<T$ of times going to $T$ as $n$ goes to infinity such that 
\begin{equation}\begin{cases}
    |x_i(t_n)-x_j(t_n)|\geq \eta \\
    |x_i(t'_n)-x_j(t'_n)| < \eta/2.
    \end{cases}
\end{equation}
According to  Proposition~\ref{prop:prevent collapse}, this cannot hold as soon as $T-t_n \leq C_2\eta^{\alpha+1}$. We have our contradiction.
\end{proof}
The elements of $\mP$ are called the~\emph{clusters of collisions} since $i,j$ belong to the same $P$ if and only if the associated point-vortices $x_i(t)$ and $x_j(t)$ are going to collide. With this convergence property, we can state the final lemma:

\begin{lemma}[Hölder estimate lemma]\label{lem:traj are holder}
Let $x_i(t)\in\RR^p$ be a set of points evolving on an interval of time $[0,T)$ with $T\leq 1$ such that for all $t \in [0,T)$ and $i \neq j$, we have $x_i(t) \neq x_j(t)$. Associate to these points their intensities $a_i$ such that the non-neutral sub-clusters hypothesis \eqref{eq:no null sub partial sum} holds true.
Let $\alpha \ge 0$ and assume that the system satisfies~\eqref{hyp:slow center of vorticity} with some constants $C_0$, $C_1$. Let $\mP$ be the cluster partition defined by Lemma \ref{lem:cluster}. We have the following properties. \vspace{0.1cm}

$(i)$ For all $P\in\mP$ and for all $i,j\in P$ we have that
\begin{equation}\label{eq:holder coyote}
    \forall\,t\in[0,T),\qquad\big|x_i(t)-x_j(t)\big|\;\leq\;C|T-t|^\frac{1}{\alpha+1},
\end{equation} with a constant $C$ that depends only on the $a_i$, $N$, $C_0$, $C_1$ and on $\alpha$. \vspace{0.1cm}

$(ii)$ Assume in addition that $\sum_{i=1}^Na_i\neq0$ and that relation~\eqref{hyp:slow center of vorticity} holds true also for $P = \{1,\ldots,N\}$. Then for all $i=1\dots N$ there exists $x_i^\ast\in\RR^2$ such that
\begin{equation}\label{eq:convergence!}
    x_i(t)\;\longrightarrow\; x_i^\ast,\qquad\text{as }t\to T.
\end{equation}
Moreover,
\begin{equation}\label{eq:holder bip bip}
    \forall\,t\in[0,T),\qquad\big|x_i(t)-x_i^\ast\big|\;\leq\;C|T-t|^\frac{1}{\alpha+1},
\end{equation}
with a constant $C$ that depends only on the $a_i$, $N$, $C_0$, $C_1$and on $\alpha$.\vspace{0.1cm}
\end{lemma}

\begin{proof}
We start with Case $(i)$. Let $P \in \mP$ be the cluster partition defined by Lemma \ref{lem:cluster} and $i,j \in P$. Let $t<T$ and let us define $\eta := |x_i(t)-x_j(t)|$. We consider two cases: $\eta\leq1$ and $\eta>1$.

Assume first that $\eta\leq1$. The collision condition \eqref{eq:pointsCollide}, in view of Proposition \ref{prop:prevent collapse}, gives that $T-t > C_2\, \eta^{\alpha+1}$. Indeed, otherwise Proposition \ref{prop:prevent collapse} would imply that $|x_i(\tau)-x_j(\tau)| \ge \eta/2$
for all $\tau \ge t$. This gives that $x_i$ and $x_j$ do not collide, which contradicts the definition of $\mP$ from Lemma~\ref{lem:cluster} (recall that $i,j \in P \in \mP$). In other words, when $i,j$ belong to the same cluster of collapse $P$, we have that
\begin{equation}
    T-t > C_2\, |x_i(t)-x_j(t)|^{\alpha+1}.
\end{equation}  
This is enough to get~\eqref{eq:holder coyote}. 
It is a direct consequence of Proposition~\ref{prop:prevent collapse} that the constant appearing above depends only on $N$, $a_i$ $C_0$, $C_1$ and $\alpha$. 

We consider now the case $\eta> 1$. Since we are in a cluster of collapse, there is a time $t'>t$ such that the distance between the vortices is equal to $1$, by continuity of the trajectories and~\eqref{eq:pointsCollide}. 
We then apply Proposition \ref{prop:prevent collapse} similarly as above but at time $t'$ to get
\begin{equation}\label{eq:quasi holder 2}
    T-t \geq T-t'\geq C_2\, |x_i(t')-x_j(t')|^{\alpha+1}=C_2.
\end{equation}  
We also know that the relative positions are uniformly bounded by Theorem~\ref{thrm:uniform relative bound}. 
We denote by $C_3$ the constant given by this theorem for intervals of time smaller than $1$ (the constant given by Theorem~\ref{thrm:uniform relative bound} may blow-up as $T\to+\infty$, this is the reason why we assume $T\leq1$). 
More precisely, we make use of a slightly better result than Theorem~\ref{thrm:uniform relative bound}, that is given by Proposition 4.1 in~\cite{Godard-Cadillac_2021}. 
This proposition states in particular that\footnote{Proposition 4.1 in~\cite{Godard-Cadillac_2021} is only stated for the point-vortex system but the only property of the point-vortices that is used is~\eqref{hyp:slow center of vorticity} so that the bound~\eqref{eq:James Bound} remain true in the general case studied here.} there exists a constant $C_3$ depending only on $N$, the intensities $a_i$, $C_0$, and $C_1$ such that
\begin{equation}\label{eq:James Bound}
\forall\,t_1<t_2\in[0,T),\qquad\Big|\big(x_i(t_2)-x_j(t_2)\big)-\big(x_i(t_1)-x_j(t_1)\big)\Big|\leq C_3.
\end{equation}
In the equation above we take $t_1=t$ and we let $t_2\to T$. Using~\eqref{eq:pointsCollide}, this gives $|x_i(t)-x_j(t)|\leq C_3$.
Therefore, with~\eqref{eq:quasi holder 2},
\begin{equation}
    |x_i(t)-x_j(t)|^{\alpha+1}\leq C_3^{\alpha+1}\leq \frac{C_3^{\alpha+1}}{C_2}(T-t).
\end{equation}
This concludes the proof of~\eqref{eq:holder coyote} and then of Case $(i)$. \vspace{1mm}


We now go to Case $(ii)$. We start by proving that the trajectories converge as $t \to T$. Since $\sum_{i=1}^N a_i \neq 0$, we have that the non neutral clusters hypothesis \eqref{eq:no null partial sum} holds true and thus we can apply Lemma~\ref{coro:B_control} (see also Remark~\ref{the_remark}) to get that for any $P \in \mP$ and for every $i\in P$,
\begin{equation}
    |x_i(t)-B_P(t)| \le \frac{a}{A}\,\max_{j\in P} |x_i(t)-x_j(t)|.
\end{equation}
This and relation~\eqref{eq:holder coyote} imply that
\begin{equation}\label{eq:x-B_quasi_holder}
        |x_i(t)-B_P(t)| \le \frac{a}{A} C\,|T-t|^\frac{1}{\alpha+1}.
\end{equation}
Finally, by hypothesis, $B_P$ satisfies \eqref{hyp:slow center of vorticity}. 
By Lemma \ref{lem:cluster} we know that relation \eqref{eq:pointsStayFar} holds so that \eqref{hyp:slow center of vorticity} becomes
\begin{equation}\label{BigBrainTime}
    \left| \der{}{t} B_P(t)\right| \leq N^2\frac{C_0}{\delta^\alpha}+C_1.
\end{equation}
This means that $t\mapsto B_P(t)$ itself is uniformly Lipschitz (with a constant depending on $\delta$) and therefore converges as $t\to T$. This fact together with~\eqref{eq:x-B_quasi_holder} proves that $x_i$ is converging towards some $x_i^\ast$. This also gives a rate of convergence however the Hölder constant depends on $\delta$ because of relation \eqref{BigBrainTime}. In order to prove that this constant does not depend on $\delta$, we use an additional argument.

We set $X^\ast:=\{x_i^\ast:i=1\dots N\}$. 
Up to an omitted modification of the labels of the indices for the points $x_i$, there exists $K\leq N$ such that
\begin{equation}X^\ast=\big\{x_i^\ast:i=1\dots K\big\},\qquad\text{and}\qquad\forall\,i\neq j\in\{1,\dots,K\},\quad x_i^\ast\neq x_j^\ast.\end{equation}
We introduce the evolution system $\zeta_k$ with $k=1\dots N+K$ defined by
\begin{equation}
    \zeta_i(t):=x_i(t),\quad\text{for }i=1\dots N\qquad\text{and}\qquad\zeta_{N+i}(t):=x_i^\ast,\quad\text{for }i=1\dots K.
\end{equation}
The intensities $b_k$ associated to this system are defined in the following manner. If $k\leq N$ we set $b_k=a_k$. 
If $k\geq N+1$, the $b_k$ are chosen such that the non-neutral clusters hypothesis~\eqref{eq:no null partial sum} holds true for $(a_i)_{1\le i \le N+K}$. 
Note that such a choice is always possible since we are in the case where the $a_i$ satisfy \eqref{eq:no null partial sum}.
We now compute for $ P\subseteq\{1,\dots, N+K\}$,
\begin{equation}
    \der{}{t}B_{ P}=\bigg(\sum_{k\in P}b_k\bigg)^{-1}\sum_{k\in P}b_k\,\der{\zeta_k}{t}=\bigg(\sum_{k\in P}b_k\bigg)^{-1}\sum_{\substack{i\in P\\i\leq N}}a_i\,\der{x_i}{t}.
\end{equation}

If $P \cap \{1,\ldots,N\} \neq \emptyset$ then $\der{}{t} B_P = 0$. Else $P \cap \{1,\ldots,N\} \neq \emptyset$ and we infer that since the dynamics of the $(x_i)_{1\le i \le N}$ satisfies~\eqref{hyp:slow center of vorticity},
\begin{align*}
    \bigg|\der{}{t}B_{ P}\bigg| & = \Bigg| \bigg(\sum_{k\in P}b_k\bigg)^{-1} \Bigg(\sum_{\substack{i\in P\\i\le N}}a_i \Bigg)\der{}{t} B_{P \cap \{1,\ldots,N\}}\Bigg| \\ & \leq \sum_{\substack{i\in P\\i\leq N}}\;\sum_{\substack{j\notin P\\j\leq N}}\;\frac{C_0'}{|x_i(t)-x_j(t)|^\alpha}+C_1' \\
& \leq \sum_{k\in P}\;\sum_{\ell\notin P}\;\frac{C_0'}{|\zeta_k(t)-\zeta_\ell(t)|^\alpha}+C_1',
\end{align*}
where
\begin{equation}
    C_0':=C_0\Bigg|\bigg(\sum_{k\in P}b_k\bigg)^{-1}\sum_{\substack{i\in P\\i\leq N}}a_i\Bigg|\qquad\text{and}\qquad C_1':=C_1\Bigg|\bigg(\sum_{k\in P}b_k\bigg)^{-1}\sum_{\substack{i\in P\\i\leq N}}a_i\Bigg|.
\end{equation}

Therefore, the system of points $\zeta_k(t)$ satisfies hypothesis~\eqref{hyp:slow center of vorticity}, and then the conclusions of Lemma~\ref{lem:cluster} and of Lemma~\ref{lem:traj are holder}-$(i)$ hold true.
In particular, if we take $i\in\{1,\dots,N\}$ and if we take $k\in\{1,\dots,K\}$ such that $x_i^\ast=x_k^\ast$, we have that $\zeta_i$ and $\zeta_{N+k}$ belong to the same cluster of collapse.
We can then apply Lemma~\ref{lem:traj are holder}-$(i)$ to the dynamics of the $\zeta_k$ and relation~\eqref{eq:holder coyote} for $\zeta_i$ and $\zeta_{N+k}$ to get that
\begin{equation}
\big|x_i(t)-x_i^\ast\big|\leq C |T-t|^\frac{1}{\alpha+1},
\end{equation}
with a constant $C$ that depends only on $N$, $\alpha$, $C_0$, $C_1$ and on the $a_i$. \vspace{1mm}

\end{proof}


\subsubsection{End of the proofs for Theorems~\ref{thrm:plane} and~\ref{thrm:weak}}

To end both proofs, we note that thanks to relation~\eqref{eq:quasi-preservation} the point-vortex dynamics \eqref{eq:evo alpha} satisfies \eqref{hyp:slow center of vorticity} with $C_1 = 0$.
We can then apply Lemma~\ref{lem:traj are holder} with $p=2$ (since the point-vortices evolve in the plane) to get the announced Hölder regularity. \vspace{0.2cm}

\noindent\emph{Proof of Theorem~\ref{thrm:plane}}: This is a direct consequence of Lemma~\ref{lem:traj are holder}-$(ii)$ applied on the time interval $[t_1,t_2]$, that is by making the time $t_2$ play the role of the final time. This is possible since the multiplicative constant does not depend on $\delta$.\vspace{0.2cm}

\noindent\emph{Proof of Theorem~\ref{thrm:weak}-}: This case is simply a reformulation Lemma~\ref{lem:traj are holder}-$(i)$. \vspace{0.2cm}

The proofs of Theorems~\ref{thrm:plane} and~\ref{thrm:weak} are completed.\qed\vspace{0.2cm}

We are not able in Theorem~\ref{thrm:weak} to establish the Hölder regularity even of the relative trajectories on the time interval $[0,T)$. This would require to apply Lemma~\ref{lem:traj are holder}-$(i)$ on the time interval $[0,t_2]$. If we do that, since there is no collapse at time $t_2$, the cluster partitions is formed of singletons. Then the constant $\delta$ at time $t_2$ is 
\begin{equation}
    \delta(t_2) = \min_{i\neq j} \min_{t \in [0,t_2]} |x_i(t)-x_j(t)|.
\end{equation}
Since the Hölder constant depends on $1/\delta(t_2)$ and $\delta(t_2) \to 0$ as $t_2 \to T$ (if there is a collapse at time $T$), this Hölder constant can not be chosen uniformly in $t_2$.

\section{Proof of Hölder regularity in bounded domains: Theorem~\ref{thrm:bounded}.}
In this section we are interested in the dynamics of vortices that lay in $\Omega$, a smooth bounded domain. We restrict our study to the case $\alpha = 1$. We assume for all this section that the intensities satisfy the non neutral cluster hypothesis~\eqref{eq:no null partial sum}.

The proof of the convergence property for the vortices in $\Omega$ is inspired from an analogous previous result for the point-vortex model in the plane~\cite{Godard-Cadillac_2021}. 
The Hölder regularity makes use of Proposition~\ref{prop:prevent collapse}. For the vortices going to the boundary in finite time, we will first prove convergence of the distance to the boundary, then establish the Hölder regularity with the use of Proposition~\ref{prop:prevent collapse_v2}, similar to Proposition~\ref{prop:prevent collapse}.


Let $(t \mapsto x_i(t))_i$ be a solution of the Euler point-vortex problem~\eqref{eq:evolution euler non simply connected} in $\Omega$ and assume that there is no collapse on the time interval $[0,T).$
For all $t\in[0,T)$ define the distribution $P_t\in\cD'(\Omega)$ by
\begin{equation}\label{def:P_t}
    P_t:=\sum_{i=1}^Na_i\,\delta_{x_i(t)}.
\end{equation}

The proof of Theorem~\ref{thrm:bounded} is separated into three parts. 
We first study the distribution $P_t$ for all $t\in[0,T)$ and prove that it converges as $t\to T$. 
Secondly, we use this result to obtain the convergence of the position of the vortices that have an accumulation point inside $\Omega$.
In the last part, we use this convergence result to prove the announced Hölder regularity results with arguments similar to the full plane case (Theorem~\ref{thrm:plane}).

\subsection{Evolution in time of the studied distribution}

\begin{lemma}\label{lem:distri speed}
Let $\varphi$ be $\cC^\infty$ and compactly supported inside $\Omega$ and let $P_t$ be the distribution on $\Omega$ defined by~\eqref{def:P_t}. Then,
\begin{equation}
    \bigg|\der{}{t}\left<P_t,\varphi\right>\bigg|\;\leq\;C\|\nabla\varphi\|_{L^\infty}\bigg(\frac{1}{\dist(\supp(\varphi),\partial\Omega)}+1\bigg) + C\,\|\nabla^2\varphi\|_{L^\infty},
\end{equation}
where $supp(\varphi)$ is the support of $\varphi$ and where
\begin{equation}
    \dist(A,B):=\inf_{x\in A}\;\inf_{y\in B}\;|x-y|.
\end{equation}
The constant $C$ depends on the coefficients $a_i$, on the circulations $\xi_m$ and on $\Omega$.
\end{lemma}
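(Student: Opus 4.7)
The plan is to differentiate the pairing directly. Writing $\langle P_t,\varphi\rangle=\sum_{i=1}^N a_i\,\varphi(x_i(t))$ and applying the chain rule,
$$
\der{}{t}\langle P_t,\varphi\rangle \;=\;\sum_{i=1}^N a_i\,\nabla\varphi(x_i(t))\cdot\der{}{t}x_i(t),
$$
I would plug in the expanded form~\eqref{eq:evolution euler reformulate} of the point-vortex evolution. This naturally splits the right-hand side into three pieces: a smooth Robin-function contribution $\sum_{i,j} a_i a_j\,\nabla\varphi(x_i)\cdot\nabla_x^\perp\gamma_\Omega(x_i,x_j)$, a singular pairwise contribution $\sum_{i\neq j} a_i a_j\,\nabla\varphi(x_i)\cdot\nabla^\perp K_1(|x_i-x_j|)$, and a circulation contribution $\sum_{i,m} a_i c_m(t)\,\nabla\varphi(x_i)\cdot\nabla^\perp w_m(x_i)$. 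The announced estimate should then follow by matching each piece to one of the three terms on the right-hand side of the inequality.

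For the Robin-function contribution, the estimate~\eqref{eq:kernel robin estimate} gives $|\nabla_x\gamma_\Omega(x_i,x_j)|\leq C_\Omega/\dist(x_i,\partial\Omega)$. Since $\nabla\varphi(x_i)$ can only be nonzero when $x_i\in\supp(\varphi)$, we have $\dist(x_i,\partial\Omega)\geq\dist(\supp(\varphi),\partial\Omega)$, and this piece is bounded by $C\|\nabla\varphi\|_{L^\infty}/\dist(\supp(\varphi),\partial\Omega)$. For the circulation contribution, the harmonic functions $w_m$ are smooth in the interior of $\Omega$, so $\nabla w_m$ is uniformly bounded on $\supp(\varphi)$ by interior regularity; the coefficients $c_m(t)=\xi_m+\sum_k a_k w_m(x_k(t))$ are bounded since $|w_m|\leq 1$ and the intensities $a_k,\xi_m$ are fixed. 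Hence this piece is bounded by $C\|\nabla\varphi\|_{L^\infty}$.

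The main obstacle is the singular pairwise piece, where $|\nabla K_1(|x_i-x_j|)|\sim|x_i-x_j|^{-1}$ can blow up if two vortices are close. The standard idea is to symmetrize using the antisymmetry $\nabla_{x_i}K_1(|x_i-x_j|)=-\nabla_{x_j}K_1(|x_i-x_j|)$: relabelling the dummy indices and averaging yields
$$
\sum_{i\neq j} a_i a_j\,\nabla\varphi(x_i)\cdot\nabla^\perp K_1(|x_i-x_j|) \;=\; \tfrac{1}{2}\sum_{i\neq j} a_i a_j\,\bigl(\nabla\varphi(x_i)-\nabla\varphi(x_j)\bigr)\cdot\nabla^\perp K_1(|x_i-x_j|).
$$
The mean-value inequality $|\nabla\varphi(x_i)-\nabla\varphi(x_j)|\leq\|\nabla^2\varphi\|_{L^\infty}|x_i-x_j|$ then exactly compensates the $|x_i-x_j|^{-1}$ singularity, so this piece is bounded by $C\|\nabla^2\varphi\|_{L^\infty}$. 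Summing the three contributions and absorbing all $N$-dependent prefactors and the $a_i,\xi_m,\Omega$-dependent constants into a single $C$ yields the announced inequality.
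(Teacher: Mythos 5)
Your proposal is correct and follows essentially the same route as the paper: the same three-way decomposition via~\eqref{eq:evolution euler reformulate}, the same use of~\eqref{eq:kernel robin estimate} combined with the compact support of $\varphi$ for the Robin term, the same symmetrization-plus-mean-value argument for the singular $K_1$ term, and the same elliptic-regularity bound for the circulation term. Nothing is missing.
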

\begin{proof}
A direct computation using the evolution equations for the point-vortex problem in bounded domains under developed form~\eqref{eq:evolution euler reformulate} gives
\begin{equation}\label{canard}\begin{split}
    &\der{}{t}\left<P_t,\varphi\right>=\der{}{t}\sum_{i=1}^Na_i\varphi\big(x_i(t)\big)=\sum_{i=1}^Na_i\nabla\varphi\big(x_i(t)\big)\cdot\der{}{t}x_i(t)\\
    &=\sum_{i=1}^N\sum_{j=1}^Na_i\,a_j\nabla\varphi\big(x_i(t)\big)\cdot\nabla^\perp_x\gamma_\Omega\big(x_i(t),x_j(t)\big)\\
    &\qquad+\frac{1}{2\pi}\sum_{i=1}^N\sum_{\substack{j=1\\j\neq i}}^Na_i\,a_j\,\nabla\varphi\big(x_i(t)\big)\cdot\nabla^\perp\,G_1\big(x_i(t)-x_j(t)\big)\\
    &\qquad+\sum_{i=1}^Na_i\nabla\varphi\big(x_i(t)\big)\cdot\Big(\sum_{m=1}^Mc_m(t)\beta_m\big(x_i(t)\big)\Big)\\
    &:=S_1+S_2+S_3.
    \end{split}
\end{equation}
The first sum appearing on the right-hand side of~\eqref{canard}, noted $S_1$, is estimated term by term using~\eqref{eq:kernel robin estimate}:
\begin{equation}\label{oie}
    \forall\;i=1\dots N,\qquad\Big|\nabla\varphi\big(x_i(t)\big)\cdot\nabla^\perp_x\gamma_\Omega\big(x_i(t),x_j(t)\big)\Big|\;\leq\;C\frac{\big|\nabla\varphi\big(x_i(t)\big)\big|}{\dist\big(x_i(t),\partial\Omega\big)}.
\end{equation}
One continues the estimate of~\eqref{oie} using that the function $\varphi$ is compactly supported inside $\Omega$:
\begin{equation}
\Big|\nabla\varphi\big(x_i(t)\big)\cdot\nabla^\perp_x\gamma_\Omega\big(x_i(t),x_i(t)\big)\Big|\;\leq\;C\left\{
     \begin{array}{ll}
     \displaystyle\frac{\|\nabla\varphi\|_{L^\infty}}{\dist\big(x_i(t),\partial\Omega\big)}&\qquad\text{if }x_i(t)\in\supp(\varphi),\vspace{0.2cm}\\
     0&\qquad\text{otherwise.}\end{array}\right.
\end{equation}
Thus,
\begin{equation}
    |S_1|\;\leq\;C\,\frac{\|\nabla\varphi\|_{L^\infty}}{\dist(\supp(\varphi),\partial\Omega)}.
\end{equation}
To estimate the second term in~\eqref{canard}, denoted by $S_2$, one proceeds to a symmetrization of the double sum by swapping the indices $i\leftrightarrow j$, in the spirit of Delort and Schochet~\cite{Delort_1991, Schochet_1996}. This gives
\begin{equation}
    S_2=\frac{1}{4\pi}\sum_{i=1}^N\sum_{\substack{j=1\\j\neq i}}^Na_i\,a_j\,\Big(\nabla\varphi\big(x_i(t)\big)-\nabla\varphi\big(x_j(t)\big)\Big)\cdot\nabla^\perp G_1\big(x_i(t)-x_j(t)\big).
\end{equation}
Using relation~\eqref{eq:K alpha},
\begin{equation}\begin{split}
    \bigg|\Big(\nabla\varphi\big(x_i(t)\big)-\nabla\varphi\big(x_j(t)\big)\Big)\cdot\nabla^\perp G_1\big(x_i(t)-x_j(t)\big)\bigg|\\
    \leq\frac{\big|\nabla\varphi\big(x_i(t)\big)-\nabla\varphi\big(x_j(t)\big)\big|}{|x_i(t)-x_j(t)|}\leq\|\nabla^2\varphi\|_{L^\infty},
    \end{split}
\end{equation}
where the last inequality is simply the mean value theorem applied to the function $\nabla\varphi.$ Thus, $|S_2|\leq C\|\nabla^2\varphi\|_{L^\infty}$. Concerning the last term $S_3$, since the functions $\beta_m$ and $c_m$ are bounded by standard elliptic estimates, we have that $|S_3|\leq C\|\nabla\varphi\|_{L^\infty}.$ Gathering these three estimates back into~\eqref{canard} concludes the proof.
\end{proof}

Now that we obtained an upper bound on the derivative of this distribution we study its limit as $t\to T.$

\begin{lemma}\label{lem:distri converge}
Let $(t \mapsto x_i(t))_i$ be a solution of the Euler point-vortex problem in bounded domains~\eqref{eq:evolution euler non simply connected}. 
Define the distribution $P_t$ with~\eqref{def:P_t}. Then for all $i=1\dots N$, there exist $x_i^\ast\in\overline{\Omega}$ such that 
\begin{equation}
    P_t\longrightarrow\sum_{i=1}^Na_i\,b_i\,\delta_{x_i^\ast},\qquad\text{as }t\to T,\quad\text{in the weak sense of measures on $\Omega$.}
\end{equation}
where
\begin{equation}\label{def:b_i}
b_i:=\left\{\begin{array}{ll}0&\quad\text{ if }x_i^\ast\in\partial\Omega,\\ 1&\quad\text{ if } x_i^\ast\in\Omega.\end{array}\right.
\end{equation}
\end{lemma}
\begin{proof}
First, as a consequence of Lemma~\ref{lem:distri speed}, the derivative in time of $\left<P_t,\varphi\right>$ is bounded for all fixed choice of $\varphi \in\cC^\infty,$ compactly supported in $\Omega$. This implies that $\left<P_t,\varphi\right>$ converges as $t\to T$ for all $\varphi.$
Therefore, there exists a distribution $P^\ast\in\cD'(\Omega)$ such that
\begin{equation}
        P_t\longrightarrow P^\ast\qquad\text{as }t\to T,\quad\text{in the distributional sense.}
\end{equation}

On the other hand, since $\Omega$ is a bounded subset of $\RR^2$, the vector $\big(x_i(t)\big)_{i=1}^N$ is bounded in $\overline{\Omega}^N$ as $t\to T$.
Therefore, by compactness, there exists an increasing sequence of time $(t_n)$ converging towards $T$ and a family of $N$ points $x_i^\ast \in \overline{\Omega}$ such that for all $i=1\dots N$,
\begin{equation}\label{coq}
x_i(t_n)\longrightarrow x_i^\ast.
\end{equation} 
As a consequence of~\eqref{def:b_i} and~\eqref{coq}, since we have for any $C^\infty$ map $\varphi$ compactly supported in $\Omega$ that
\begin{equation}
\left<\sum_{i=1}^Na_i\,\delta_{x_i(t_n)},\varphi\right> = \sum_{i=1}^N a_i\varphi(x_i(t_n)) \longrightarrow  \sum_{i=1}^Na_i b_i\varphi (x_i^\ast) = \left<\sum_{i=1}^Na_i\,b_i\,\delta_{x_i^\ast},\varphi\right>\qquad\text{as}\;n\to+\infty,
\end{equation}
we have the following convergence in the distributional sense:
\begin{equation}
\sum_{i=1}^Na_i\,\delta_{x_i(t_n)}\longrightarrow\sum_{i=1}^Na_i\,b_i\,\delta_{x_i^\ast}\qquad\text{as}\;n\to+\infty.
\end{equation}
By uniqueness of the limit, it is possible to identify
\begin{equation}
P^\ast = \sum_{i=1}^Na_i\,b_i\,\delta_{x_i^\ast}.
\end{equation}
The fact that the convergence of $P_t$ towards $P^\ast$ in $\cD'$ is 
actually a convergence in the weak sense of measures comes from the 
fact that the measure $P_t$ is bounded (by $\sum_i|a_i|$) for all $t$.
\end{proof}

\subsection{Convergence of the vortices}

\begin{lemma}[Accumulation points for the vortices]\label{lem:adherence set}
Let $(t \mapsto x_i(t))_i$ be a solution of the Euler point-vortex problem~\eqref{eq:evolution euler non simply connected} in $\Omega$. 
Consider a set of points $x_i^\ast$ given by Lemma~\ref{lem:distri converge}. 

Then, for all $i=1\dots N,$
\begin{equation}
    \Big\{x_0\in\overline{\Omega}\;:\;\liminf_{t\to T}|x_i(t)-x_0|=0\Big\}\;\subseteq\;\partial\Omega\cup\Big(\bigcup_{k=1}^N\{x_k^\ast\}\Big).
\end{equation}
\end{lemma}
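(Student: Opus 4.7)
The plan is to argue by contradiction using the distributional convergence established in Lemma~\ref{lem:distri converge} together with the non-neutral clusters hypothesis from Theorem~\ref{thrm:bounded}. Fix $i\in\{1,\dots,N\}$ and suppose that there exists $x_0\in\overline{\Omega}$ with $\liminf_{t\to T}|x_i(t)-x_0|=0$, yet $x_0\notin\partial\Omega$ and $x_0\neq x_k^\ast$ for every $k=1,\dots,N$. I will derive a contradiction by testing both limits of the measure $P_t$ against a suitably localized bump function at $x_0$.

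First I would extract a sequence of times $t_n\to T^-$ with $x_i(t_n)\to x_0$. Since $\Omega$ is bounded, a diagonal extraction yields a subsequence (still denoted $t_n$) along which $x_j(t_n)\to y_j\in\overline{\Omega}$ for every $j=1,\dots,N$, with $y_i=x_0$. Define the index set $J:=\{j\in\{1,\dots,N\}:y_j=x_0\}$, which is nonempty since it contains $i$. Set
\begin{equation*}
r\;:=\;\tfrac{1}{3}\min\Big\{\dist(x_0,\partial\Omega),\;\min_{k}|x_0-x_k^\ast|,\;\min_{j\notin J}|x_0-y_j|\Big\}\;>\;0,
\end{equation*}
where the minimum is well defined and strictly positive thanks to our contradiction hypotheses and the finiteness of the index sets. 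Choose $\varphi\in\cC_c^\infty(\Omega)$ with $0\leq\varphi\leq 1$, $\varphi\equiv 1$ on $D(x_0,r)$ and $\supp(\varphi)\subset D(x_0,2r)$.

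The key computation is now to evaluate $\langle P_t,\varphi\rangle$ in two different ways. On the one hand, by the choice of $r$, the support of $\varphi$ contains none of the limit points $x_k^\ast$, so Lemma~\ref{lem:distri converge} gives
\begin{equation*}
\big\langle P_t,\varphi\big\rangle\;\longrightarrow\;\sum_{k=1}^N a_k\,b_k\,\varphi(x_k^\ast)\;=\;0\qquad\text{as }t\to T^-.
\end{equation*}
On the other hand, along $(t_n)$, for $n$ large one has $x_j(t_n)\in D(x_0,r)$ for $j\in J$ (whence $\varphi(x_j(t_n))=1$) and $x_j(t_n)\notin\supp(\varphi)$ for $j\notin J$ (whence $\varphi(x_j(t_n))=0$). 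Therefore
\begin{equation*}
\big\langle P_{t_n},\varphi\big\rangle\;=\;\sum_{j=1}^N a_j\,\varphi\big(x_j(t_n)\big)\;\longrightarrow\;\sum_{j\in J} a_j.
\end{equation*}
Since $J$ is a nonempty subset of $\{1,\dots,N\}$, the non-neutral clusters hypothesis~\eqref{eq:no null partial sum} ensures $\sum_{j\in J}a_j\neq 0$, contradicting the first computation.

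The argument is essentially a separation of scales: one localizes the test function tightly enough around $x_0$ to ignore the other limit points and the boundary. The only subtle point is to extract simultaneously the limits $y_j$ of all trajectories so as to identify the cluster $J$ of vortices collapsing to $x_0$, and this is precisely where compactness of $\overline{\Omega}$ and the non-neutrality of $\sum_{j\in J}a_j$ intervene. No further analytical work is required, as the convergence of $P_t$ in $\cD'(\Omega)$ is already granted by Lemma~\ref{lem:distri converge}.
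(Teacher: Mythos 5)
Your proof is correct and follows essentially the same route as the paper: extract a subsequence along which all trajectories converge, localize a bump function at $x_0$ away from $\partial\Omega$, the points $x_k^\ast$, and the limits of the non-colliding vortices, then contradict the vanishing of $\langle P^\ast,\varphi\rangle$ using the non-neutral clusters hypothesis applied to the nonempty cluster accumulating at $x_0$. The only differences are cosmetic (your $J$ is the complement of the paper's, and the radii are chosen slightly differently).
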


\begin{proof}
Let $x_0\in \Omega$. Assume that there exists an index $i$ and a sequence of times $(t_n)$ such that
\begin{equation}
    x_i(t_n)\longrightarrow x_0,\qquad\text{as }t\to T.
\end{equation}
Since $\Omega$ is bounded, one can assume, up to an omitted extraction, that for all $j=1\dots N$, there exists $x_j^\dag\in\overline{\Omega}$ such that
\begin{equation}
       x_j(t_n)\longrightarrow x_j^\dag,\qquad\text{as }t\to T.
\end{equation}
As in the proof of Lemma~\ref{lem:distri converge}, we have that
\begin{equation}
\sum_{j=1}^Na_j\,\delta_{x_j(t_n)}\longrightarrow\sum_{j=1}^Na_j\,b'_j\,\delta_{x_j^\dagger}\qquad\text{as}\;n\to+\infty,
\end{equation}
where the $b'_j$ are defined as in \eqref{def:b_i} with the points $x_j^\dagger$.

By uniqueness of the limit, recalling that by Lemma~\ref{lem:distri converge}, $P_t \to P^\ast$, we have that
\begin{equation}\label{eq:identification des mesures}
    \sum_{j=1}^Na_j\,b'_j\,\delta_{x_j^\dagger} = \sum_{j=1}^Na_j\,b_j\,\delta_{x_j^\ast}.
\end{equation}
We focus on the left-hand side of this equation. We want to prove that the coefficient of $\delta_{x_0}$ is not 0. Let $J = \big\{j = 1 \dots N : x_j^\dagger = x_0\big\}$. Then, since $x_0 \in \Omega$, this coefficient is
\begin{equation}
    \sum_{j\in J} a_j b'_j = \sum_{j\in J} a_j.
\end{equation}
By definition of $J$, $i \in J$, thus $J$ is non empty. By the non neutral cluster hypothesis \eqref{eq:no null partial sum}, the coefficient of $\delta_{x_0}$ is not zero. Therefore relation \eqref{eq:identification des mesures} gives that $x_0\in\{ x_1^\ast,\ldots,x_N^*\}$.
\end{proof}

Now that we have a better description of the possible accumulation points for the vortex $x_i(t)$, it remains to study the convergence of its trajectory.

\begin{lemma}[The \emph{lim inf} is actually a limit]\label{lem:liminf implies lim}
Let $(t \mapsto x_i(t))_i$ be a solution of the Euler point-vortex problem~\eqref{eq:evolution euler non simply connected} in $\Omega$. 

If $x_0\in\Omega$ is such that
\begin{equation}\label{eq:liminf hypothesis}
    \liminf_{t\to T^-}\big|x_i(t)-x_0\big|=0,
\end{equation}
then
\begin{equation}
    \lim_{t\to T^-}\big|x_i(t)-x_0\big|=0.
\end{equation}
\end{lemma}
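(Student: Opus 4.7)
The plan is to argue by contradiction relying entirely on Lemma~\ref{lem:adherence set}; no extra estimate on the vector field driving $x_i$ is required. The key observation is that the set $\partial\Omega \cup \{x_k^* : k=1,\dots,N\}$ to which every subsequential limit of $x_i(t)$ must belong is the union of the closed set $\partial\Omega$ with a finite collection of isolated points, while $x_i^0 \in \Omega$ is interior. Consequently, setting
\[ \delta \;:=\; \min\Big\{\dist(x_i^0,\partial\Omega),\;\; \min_{k\,:\,x_k^* \neq x_i^0} |x_i^0 - x_k^*|\Big\}, \]
with the convention that the second minimum is $+\infty$ when the indexing set is empty, gives a strictly positive number.

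Next I would assume for contradiction that $|x_i(t)-x_i^0|$ does not converge to $0$. Combined with hypothesis~\eqref{eq:liminf hypothesis}, this supplies some $\eta \in (0,\delta)$ and two sequences of times tending to $T^-$ on which $|x_i - x_i^0|$ is respectively below $\eta/2$ and above $\eta$. Using the continuity of the trajectory $t\mapsto x_i(t)$ on $[0,T)$ and the intermediate value theorem, I would then extract a sequence $\sigma_n \to T^-$ such that $|x_i(\sigma_n) - x_i^0| = \eta$ exactly.

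To close the argument, I would pass to a subsequence along which $x_i(\sigma_n)$ converges in the compact set $\overline\Omega$ to some limit $y$ with $|y - x_i^0| = \eta$. Lemma~\ref{lem:adherence set} forces $y \in \partial\Omega \cup \{x_k^* : k=1,\dots,N\}$, and the definition of $\delta$ then leaves no possibility other than $y = x_i^0$, contradicting $\eta > 0$. I expect no serious technical obstacle in this part: all the substantive work (the bound on $\der{}{t}\langle P_t,\varphi\rangle$ from Lemma~\ref{lem:distri speed}, the identification of the distributional limit in Lemma~\ref{lem:distri converge}, and the localisation of the accumulation set in Lemma~\ref{lem:adherence set}) has already been invested, so that the present \emph{liminf implies lim} statement reduces to elementary topology of continuous curves in $\overline\Omega$ avoiding finitely many forbidden cluster points.
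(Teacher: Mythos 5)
Your argument is correct and follows essentially the same route as the paper's own proof: argue by contradiction, use the intermediate value theorem to place $x_i(\sigma_n)$ on a sphere of fixed small radius around $x_i^0$, extract a convergent subsequence by compactness of $\overline\Omega$, and contradict Lemma~\ref{lem:adherence set}. The only cosmetic difference is that the paper first identifies $x_i^0$ with some $x_k^\ast$ and introduces an explicit second adherence point $x^\dag$ before choosing the radius, whereas you negate the convergence directly; this changes nothing of substance.
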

\begin{proof}
We apply Lemma \ref{lem:distri converge} to construct the points $x_k^\ast\in\Omega$. As a consequence of Lemma~\ref{lem:adherence set}, there exists an index $k$ such that $x_0=x_{k}^\ast$.
Assume for the sake of contradiction that there exists $x^\dag\neq x_0$ in $\overline{\Omega}$ such that
\begin{equation}\label{saumon}
    \liminf_{t\to T^-}\big|x_i(t)-x^\dag\big|=0.
\end{equation}
The two \emph{lim inf} given by~\eqref{eq:liminf hypothesis} and by~\eqref{saumon} imply the existence of an increasing sequence of time $(t_n)$ converging towards $T$ such that
\begin{equation}\label{truite}
        \lim_{n\to+\infty}\big|x_i(t_{2n})-x^\ast_k\big|=0\qquad\text{and}\qquad    \lim_{n\to+\infty}\big|x_i(t_{2n+1})-x^\dag\big|=0.
\end{equation}
Let us define
\begin{equation}
    J:=\big\{j=1\dots N:x_k^\ast\neq x_j^\ast\big\}
\end{equation}
and
\begin{equation}\begin{split}
    &\delta_0:=\dist(x_k^\ast,\partial\Omega)>0,\\
    &\delta^\ast:=\min_{j\in J}|x_k^\ast-x_j^\ast|\in(0,+\infty],\\
    &\delta^\dag:=|x_k^\ast-x^\dag|>0,\\
    &\delta:=\frac{1}{2}\min\big\{\delta_0;\delta^\ast;\delta^\dag\big\}>0.
\end{split}
\end{equation}
With such definitions and since $\delta<\delta^\dag$,
\begin{equation}
    x^\dag\notin \cB\big(x_k^\ast,\delta\big).
\end{equation}
Then, by continuity of the trajectories and using the intermediate value theorem, Condition~\eqref{truite} implies (when $n$ is large enough) the existence of a time $\tau_n\in[t_{2n};t_{2n+1}]$ such that
\begin{equation}
    \big|x_i(\tau_n)-x_k^\ast\big|=\delta.
\end{equation}
In other words, $x_i(\tau_n)$ belongs to the circle of center $x_k^\ast$ and of radius $\delta$, noted $\cC(x_k^\ast,\delta)$. This circle being a compact set, up to an omitted extraction the following convergence holds:
\begin{equation}
    x_i(\tau_n)\;\longrightarrow\;\widehat{x_i}\in\cC(x_k^\ast,\delta).
\end{equation}
This point $\widehat{x_i}$ is an accumulation point of $x_i(t)$ as $t\to T$.
By definition of $\delta$, one can easily check that $\cC(x_k^\ast,\delta)\subseteq\Omega \setminus \{ x_1^\ast,\ldots,x_N^\ast \}$. We conclude that $\widehat{x_i}$ is an accumulation point of $x_i(t)$ as $t\to T$ which belongs to $\Omega$ but not to $\{ x_1^\ast,\ldots,x_N^\ast\}$. 
This is in contradiction with Lemma~\ref{lem:adherence set}.
\end{proof}

\subsection{Hölder regularity properties}
There remain to establish the announced Hölder regularity for the trajectories that converge inside $\Omega$ as $t\to T$ and the Hölder regularity of the distance with the boundary for the vortices that collapses with $\partial\Omega$.

\subsubsection{Choosing the right interval of time}
To start with, we recall that the boundary of $\Omega$ is assumed to be smooth enough to have its curvature well-defined and bounded. This implies the existence of a $\overline{\delta}>0$ such that for all $x\in\Omega$ such that $\dist(x,\partial\Omega)\leq\overline{\delta}$, there exists a unique $x'\in\partial\Omega$ such that
\begin{equation}
    |x-x'|=\dist(x,\partial\Omega).
\end{equation}
In other words, the projection of $x$ on $\partial\Omega$ is well-defined on the set $\Omega\cap$ $\!(\partial\Omega+\cB(0,\overline{\delta}))$.
This projection is denoted by $P_{\partial\Omega}$. 
If $\overline{\delta}$ is chosen small enough (or, which is equivalent, if we replace $\overline{\delta}$ by $\overline{\delta}/2$), then the function $P_{\partial\Omega}$ is smooth and the gradient of the distance between $x$ and $\partial\Omega$ is given by:
\begin{equation}\label{eq:gradient_distance}
    \nabla_x\dist(x,\partial\Omega)=\frac{x-P_{\partial\Omega}x}{|x-P_{\partial\Omega}x|}.
\end{equation}
See for instance \cite[Appendix 14.6]{Gilbarg_Trudinger_2001_elliptic},  for details.
Moreover, it satisfies the following Lipschitz estimate: for all $x,y\in\Omega\cap$ $\!(\partial\Omega+\cB(0,\overline{\delta}))$,
\begin{equation}
\big|P_{\partial\Omega}x-P_{\partial\Omega}y\big|\,\leq\, C_{\partial\Omega}\,|x-y|,
\end{equation}
where $C_\Omega$ is a constant that depends only on the maximal curvature of $\partial\Omega$. Note that $\overline{\delta}$ also depends only of the maximal curvature of $\partial\Omega$.

Recall that we defined by~\eqref{def:I}:
\begin{equation}\label{def:I again}
    I:=\big\{i=1\dots N\,:\,\liminf\limits_{t\to T^-}\;\dist\big(x_i(t),\partial\Omega\big)=0\big\}.
\end{equation}
If $i \notin I$, by compactness, the trajectory $x_i(t)$ has an accumulation point inside $\Omega$ as $t \to T^-$. By Lemma~\ref{lem:liminf implies lim} this means that
\begin{equation}\label{eq:cv_inside}
    \forall\;i\notin I,\quad\exists\;x_i^\ast\in\Omega,\qquad x_i(t)\longrightarrow x_i^\ast\quad\text{as }t\to T^-.
\end{equation}
If $i \in I$, then by Lemma~\ref{lem:liminf implies lim}, it is not possible for $x_i(t)$ to have an accumulation point inside $\Omega$ as $t \to T^-$ and thus 
\begin{equation}\label{eq:cv_bord}
    \forall i \in I, \qquad \lim_{t\to T^-}\dist(x_i(t),\partial\Omega) = 0.
\end{equation}

We define $d_0$ as
\begin{equation}\label{def:d_0}
     d_0 :=\min_{i\notin I}\inf_{t \in [0,T)}\dist(x_i(t),\partial\Omega).
\end{equation}
Clearly $d_0 > 0$. 
We also define
\begin{equation}\label{def:delta again}
   \delta:=\frac{1}{4}\min\{\overline{\delta};d_0;1\}.
\end{equation}

From those definitions and relations \eqref{eq:cv_inside} and \eqref{eq:cv_bord}, we get the existence of a time $T_\delta\in[0,T)$ depending on $\delta$ and thus on $d_0$ and $\Omega$ such that for all $t\in[T_\delta;T)$:
\begin{equation}\label{eq:separation of converging vortices}
    \forall\;i\notin I,\quad\dist(x_i(t),\partial\Omega)\geq\frac{3}{4}\delta\qquad\text{and}\qquad\forall\;i\in I,\quad\dist(x_i(t),\partial\Omega)\leq\frac{1}{4}\delta.
\end{equation}
More precisely, we define $T_\delta$ as follows:
\begin{equation}\label{def:T delta}
    T_\delta:=\inf\bigg\{\tau\in[0,T)\,:\,\text{condition}~\eqref{eq:separation of converging vortices}~\text{holds true for all } t \in [\tau,T)\bigg\}<T.
\end{equation}
We are going to establish the Hölder regularity properties near the collapse on the time interval $[T_\delta, T)$. 

\subsubsection{Hölder regularity property for the vortices far from the boundary}
The vortices $x_i$ with $i\notin I$ are the vortices that remain far from the boundaries of $\Omega$. Let $i\notin I$, the equation of evolution for such a vortex is given by
\begin{equation}\label{eq:evolution restricted}
\der{x_i(t)}{t} = \frac{1}{2\pi}\sum_{\substack{k \notin I\\k\neq i}}a_k\nabla^\perp\,G_1\big(x_i(t)-x_k(t)\big) + f_i(t),
\end{equation}
where $f_i$ denotes the influence of the boundary of $\Omega$ and of the vortices that are colliding with the boundary:
\begin{equation}
    f_i(t) = \sum_{k=1}^N a_k\nabla^\perp_x\gamma_\Omega\big(x_i(t),x_k(t)\big)+\frac{1}{2\pi}\sum_{k \in I} a_k\nabla^\perp\,G_1\big(x_i(t)-x_k(t)\big)+\sum_{m=1}^Mc_m(t)\beta_m\big(x_i(t)\big).
\end{equation}
On the interval of time $[T_\delta, T)$, it is a consequence of~\eqref{eq:kernel robin estimate} and~\eqref{eq:separation of converging vortices} that the function $f_i$ is bounded by a constant that depends on $a_i$, on $N$, on $\Omega$, on $\delta>0$ and on $\xi_m$. The dependency with respect to $\xi_m$ comes from the definition of $c_m(t)$ at~\eqref{def:c_m}.
Therefore, the dynamics \eqref{eq:evolution restricted} satisfies \eqref{hyp:slow center of vorticity}, since for any $P \subset \{1,\dots,N\} \setminus I$, 
\begin{align*}
    \left|\der{}{t} B_P\right| & = \bigg| \sum_{i\in P} a_i\bigg|^{-1} \Bigg|\frac{1}{2\pi}\sum_{i\in P} \sum_{\substack{k \notin I\\k\neq i}}a_ia_k\nabla^\perp\,G_1\big(x_i(t)-x_k(t)\big) + \sum_{i\in I} a_if_i(t)  \Bigg| \\ 
    & \le \sum_{i\in P}\sum_{k \notin P\cup I} \frac{C_0}{|x_i(t)-x_k(t)|} + C_1.
\end{align*} 
If we now consider $t_1<t_2\in[T_\delta, T)$ it is then possible to apply Lemma~\ref{lem:traj are holder}-$(ii)$ to the dynamics~\eqref{eq:evolution restricted} on the interval $[t_1,t_2)$ with $\alpha=1$ to obtain the following Hölder estimate:
\begin{equation}
    \forall\;t_1<t_2\in[T_\delta, T),\qquad\big|x_i(t_2)-x_i(t_1)\big|\,\leq\,C\sqrt{t_2-t_1\,}.
\end{equation}
The constant $C$ depends on $a_i$, on $N$, on $\delta>0$ and on $\xi_m$. To conclude the proof of Theorem~\ref{thrm:bounded}-$(i)$, we observe that on the time interval $[0,T_\delta]$ the trajectories are actually $C^\infty$.

\subsubsection{Estimate of the distances to the boundary}
We now study the system of equations for the distance to the boundary of the point-vortices that collapse with the boundary: $t\mapsto\dist(x_i(t),\partial\Omega)$ with $i\in I$.
\begin{lemma}\label{lem:centers of vorticity for the distances}
Consider the point-vortex dynamics $x_i(t)$ with intensities $a_i\neq0$ on bounded multi-connected domains~\eqref{eq:evolution euler non simply connected} and under non-neutral clusters hypothesis~\eqref{eq:no null partial sum}. Define the set $I\subseteq\{1,\dots,N\}$ by~\eqref{def:I again}, the distance $\delta>0$ by~\eqref{def:delta again} and the time $T_\delta$ by~\eqref{def:T delta}. 

Then, the following estimate holds for all $J\subseteq I$ non-empty:
\begin{equation}\begin{split}
    \forall\;t\in[T_\delta, T),\quad&\quad\bigg|\der{}{t}\bigg(\sum_{i\in J}a_i\bigg)^{-1}\sum_{i\in J}a_i\,\dist\big(x_i(t),\partial\Omega\big)\bigg|\\
    &\leq \sum_{i\in J}\sum_{j\in I\setminus J}\frac{C_0}{|\dist(x_i(t),\partial\Omega)-\dist(x_j(t),\partial\Omega)|}+\sum_{i\in J}\frac{C_1}{\dist(x_i(t),\partial\Omega)}+C_2,
\end{split}
\end{equation}
where $C_0$, $C_1$ and $C_2$ are constant depending on $N$, $a_i$, $\Omega$, $\xi_m$ and $\delta$.
\end{lemma}
\begin{proof}
Note first that $P_{\partial\Omega}$ is well-defined for $x_i(t)$ when $i\in I$ and $t\in[T_\delta, T)$ because of~\eqref{eq:separation of converging vortices}. Using~\eqref{eq:evolution euler non simply connected}:
\begin{equation}~\label{eq:evolution distance}\begin{split}
    \der{}{t}\dist(x_i(t),\partial\Omega)&=\nabla_x\dist(x_i(t),\partial\Omega)\cdot\der{}{t}x_i(t) \\
    & = \frac{x_i(t)-P_{\partial\Omega}x_i(t)}{|x_i(t)-P_{\partial\Omega}x_i(t)|}\cdot \der{}{t}x_i(t)\\
    &=g_i(t)+h_i(t)-\ell_i(t)-\frac{x_i(t)-P_{\partial\Omega}x_i(t)}{|x_i(t)-P_{\partial\Omega}x_i(t)|}\cdot\sum_{\substack{j\in I\\j\neq i}}a_j\nabla_x^\perp\cG_\Omega\big(x_i(t),x_j(t)\big)
\end{split}
\end{equation}
where
\begin{equation}
g_i(t):=a_i\frac{x_i(t)-P_{\partial\Omega}x_i(t)}{|x_i(t)-P_{\partial\Omega}x_i(t)|}\cdot\nabla^\perp\gamma_\Omega\big(x_i(t),x_i(t)\big),
\end{equation}
\begin{equation}h_i(t):=\frac{x_i(t)-P_{\partial\Omega}x_i(t)}{|x_i(t)-P_{\partial\Omega}x_i(t)|}\cdot\sum_{m=1}^Mc_m(t)\beta_m\big(x_i(t)\big)
\end{equation}
and
\begin{equation}
    \ell_i(t):=\frac{x_i(t)-P_{\partial\Omega}x_i(t)}{|x_i(t)-P_{\partial\Omega}x_i(t)|}\cdot\sum_{j\notin I}a_j\nabla_x^\perp\cG_\Omega\big(x_i(t),x_j(t)\big).
\end{equation}
We have that the function $g_i(t)$ is bounded by a constant that depends only on $\Omega$, see the proof of Corollary 3.6 in~\cite{Donati_2021}. Concerning $h_i(t)$, the definitions of $c_m(t)$ and $\beta_m$ respectively at~\eqref{def:c_m} and~\eqref{def:beta_m} give that this term is bounded by standard elliptic estimates. The bounds depend on $a_i$, on $N$, on $\Omega$ and on $\xi_m$. It is a consequence of the following estimate
\begin{equation}\label{eq:estimate cG bis}
    \forall x,y \in \Omega, \qquad \big|\nabla_x \cG_\Omega(x,y)\big|\,\leq\,\frac{C_\Omega}{|x-y|},
\end{equation}
obtained from relations~\eqref{def:robin function} and~\eqref{eq:kernel robin general estimate}, that the function $t\mapsto \ell_i(t)$ is bounded by a constant that depends on $a_i$, $\Omega$, $N$ and on $\delta>0$. The remaining term in~\eqref{eq:evolution distance} is the only singular term. 

Let us compute now the evolution of the barycenters for the distances to $\partial\Omega$. Let $J\subseteq I$. Recalling \eqref{def:robin function}, we have that
\begin{equation}\label{eq:evolution distance bary}\begin{split}
    &\qquad\der{}{t}\sum_{i\in J}a_i\,\dist\big(x_i(t),\partial\Omega\big)\\ & = \sum_{i\in J}a_i\Big(g_i(t)+h_i(t)-\ell_i(t)\Big)- \sum_{i\in J} \sum_{\substack{ j\in I\\j\neq i}}a_i\,a_j\frac{x_i(t)-P_{\partial\Omega}x_i(t)}{|x_i(t)-P_{\partial\Omega}x_i(t)|}\cdot\nabla_x^\perp\cG_\Omega\big(x_i(t),x_j(t)\big) \\
    &=\sum_{i\in J}a_i\,\Big(g_i(t)+h_i(t)-\ell_i(t)\Big)+Q_J(t)+R_J(t)-S_J(t)
\end{split}
\end{equation}
where\begin{equation}\label{def:Q_J}
    Q_J(t):=\frac{1}{2\pi}\sum_{i\in J}\sum_{\substack{j\in J\\j\neq i}}a_i\,a_j\frac{x_i(t)-P_{\partial\Omega}x_i(t)}{|x_i(t)-P_{\partial\Omega}x_i(t)|}\cdot\nabla^\perp G_1\big(x_i(t)-x_j(t)\big),
\end{equation}
\begin{equation}\label{def:R_J}
    R_J(t):=\sum_{i\in J}\sum_{\substack{j\in J\\j\neq i}}a_i\,a_j\frac{x_i(t)-P_{\partial\Omega}x_i(t)}{|x_i(t)-P_{\partial\Omega}x_i(t)|}\cdot\nabla_x^\perp\gamma_\Omega\big(x_i(t),x_j(t)\big)
\end{equation}
and
\begin{equation}
    S_J(t):=\sum_{i\in J}\sum_{j\in I\setminus J}a_i\,a_j\frac{x_i(t)-P_{\partial\Omega}x_i(t)}{|x_i(t)-P_{\partial\Omega}x_i(t)|}\cdot\nabla_x^\perp\cG_\Omega\big(x_i(t),x_j(t)\big).
\end{equation}

The most singular term above is \emph{a priori} the term $Q_J$ in \eqref{def:Q_J}. Nevertheless, it can be rewritten using the symmetry property $G_1(x-y)=G_1(y-x)$ to symmetrize the double sum:
\begin{equation}\label{atchoum}
    \begin{split}
    &\sum_{i\in J}\sum_{\substack{j\in J\\j\neq i}}a_i\,a_j\frac{x_i(t)-P_{\partial\Omega}x_i(t)}{|x_i(t)-P_{\partial\Omega}x_i(t)|}\cdot\nabla^\perp G_1\big(x_i(t)-x_j(t)\big)\\
    &=\frac{1}{2}\sum_{i\in J}\sum_{\substack{j\in J\\j\neq i}}a_i\,a_j\bigg(\frac{x_i(t)-P_{\partial\Omega}x_i(t)}{|x_i(t)-P_{\partial\Omega}x_i(t)|}-\frac{x_j(t)-P_{\partial\Omega}x_j(t)}{|x_j(t)-P_{\partial\Omega}x_j(t)|}\bigg)\cdot\nabla^\perp G_1\big(x_i(t)-x_j(t)\big).
    \end{split} 
\end{equation}
Since we only focus on points that are close to the boundary, the gradient of $x\mapsto \dist(x,\partial\Omega)$ is a smooth function. Recalling \eqref{eq:gradient_distance} we have
\begin{equation}
    \bigg|\frac{x_i(t)-P_{\partial\Omega}x_i(t)}{|x_i(t)-P_{\partial\Omega}x_i(t)|}-\frac{x_j(t)-P_{\partial\Omega}x_j(t)}{|x_j(t)-P_{\partial\Omega}x_j(t)|}\bigg|\;\leq C_\Omega\big|x_i(t)-x_j(t)\big|.
\end{equation}
Combining this estimate with $|\nabla G_1(x)|\leq 1/|x|$ gives that the right hand side of~\eqref{atchoum} is bounded by a constant that depends only on $N$, $a_i$ and $\Omega$.
Concerning $R_J$ in \eqref{def:R_J}, we make use of Property~\eqref{eq:kernel robin estimate} on the $\gamma_\Omega$ function to write
\begin{equation}
    |R_J(t)|\leq C\sum_{i\in J}\frac{1}{\dist(x_i(t),\partial\Omega)}.
\end{equation}
where the constant $C$ here depends only on $N$, $a_i$ and $\Omega$.
For the term $S_J(t)$, we first remark that since the distance to the boundary is smooth we have that
\begin{equation}
    \big|\dist(x,\partial\Omega)-\dist(y,\partial\Omega)\big|\leq C_\Omega\,|x-y|
\end{equation}
Plugging this back into~\eqref{eq:estimate cG bis} leads to
\begin{equation}
        \big|\nabla_x\cG_\Omega(x,y)\big|\,\leq\,\frac{C'_\Omega}{|\dist(x,\partial\Omega)-\dist(y,\partial\Omega)|},
\end{equation}
so that,
\begin{equation}
    |S_J(t)|\leq C\sum_{i\in I}\sum_{j\in I\setminus J}\frac{1}{|\dist(x_i(t),\partial\Omega)-\dist(x_j(t),\partial\Omega)|}
\end{equation}
and the constant $C$ depends on $N$, $a_i$ and $\Omega$. Plugging all these estimates back into~\eqref{eq:evolution distance bary} gives the required conclusion.
\end{proof}

\subsubsection{Hölder regularity property for the vortices collapsing with the boundary}
With Lemma~\ref{lem:centers of vorticity for the distances} at hand, it is possible to continue the proof of the Hölder regularity with arguments similar to the case of the whole plane. For that purpose, we prove:

\begin{proposition}[Sufficient condition to prevent a collapse - \emph{bis}]\label{prop:prevent collapse_v2}
For $i = 1\dots N$, let $t\mapsto \zeta_i(t)$ be a family of $N$ different points of $\RR^p$ evolving on a time interval $[0,T)$, with $T>0$. Let $(a_i)_{i=1\dots N}$ satisfy \eqref{eq:no null partial sum}. The definition of the barycenter of clusters with intensities $a_i$ is given analogously to~\eqref{def:B_P}.

We assume that these points evolve such that there exists $C_0, C_1, C_2 \ge 0$ and $\alpha\geq 0$ such that
\begin{equation}\label{hyp:slow center of vorticity_v2}
    \forall P \in \cP_0(N),\qquad \left| \der{}{t} B_P(t)\right| \leq \sum_{i\in P} \sum_{j\notin P} \frac{C_0}{|\zeta_i(t)-\zeta_j(t)|^\alpha} + \sum_{i\in P}\frac{C_1}{|\zeta_i(t)|^\alpha}+C_2.
\end{equation}
Then there exists a constant $C_3>0$ such that for all $\eta \in (0,1]$, for all $t \in [0,T)$ such that
\begin{equation}
    T-t \le C_3\, \eta^{\alpha+1},
\end{equation}
and for all indices $i\neq j  \in \{1,\dots, N\}$,  the two following implications are true:
\begin{equation}
    |\zeta_i(t)-\zeta_j(t)| \geq \eta \quad\Longrightarrow\quad \forall \tau \in [t,T), \quad |\zeta_i(\tau)-\zeta_j(\tau)| \ge \frac{\eta}{2}
\end{equation}
and
\begin{equation}
    |\zeta_i(t)| \geq \eta \quad\Longrightarrow\quad \forall \tau \in [t,T), \quad |\zeta_i(\tau)| \ge \frac{\eta}{2}.
\end{equation}
The constant $C_3$ depends only on $\alpha$, $a$, $A$, $C_0$, $C_1$, $C_2$ and $N$.
\end{proposition}
The proof of this proposition is somehow similar to the proof of Proposition~\ref{prop:prevent collapse}. 
Several extra arguments are required to take into account the additional singular term appearing at~\eqref{hyp:slow center of vorticity_v2}. 
The details of the new proof are delayed to Section~\ref{sec:appendix holder}.

We continue with the proof of Theorem~\ref{thrm:bounded}.
\begin{lemma}\label{lem:traj are holder v2}
Let $t\in[0,T)\mapsto \big(x_i(t)\big)_i$ be a solution of the point-vortex problem in a bounded domain $\Omega$ with intensities $a_i$ satisfying the non-neutral clusters hypothesis~\eqref{eq:no null partial sum}.
Recall the definition of $I$ at \eqref{def:I again}, $\delta>0$ at~\eqref{def:delta again} and $T_\delta<T$ at~\eqref{def:T delta}. Define
\begin{equation}
    z_i(t):=\dist\big(x_i(t);\partial\Omega\big).
\end{equation}

Then, there exists a constant $C$ such that for all $i \in I$,
\begin{equation}\label{eq:holder avec le bord et T delta}
    \forall\;t_1<t_2\in[T_\delta, T),\qquad\big|z_i(t_2)-z_i(t_1)\big|\le \;C\;\sqrt{t_2-t_1\,}.
\end{equation}
\end{lemma}

\begin{proof}The proof of this lemma is reminiscent of the proof of Lemma~\ref{lem:traj are holder}. 
Let $t_2\in[T_\delta, T).$ For all $t \in [T_\delta,t_2)$, we define the evolution system $\zeta_k(t)$ for $k\in I\cup(N+I)$ by
\begin{equation}
\zeta_{i}(t):=z_i(t),\qquad\text{and}\qquad\zeta_{N+i}(t):=z_{i}(t_2),\qquad\text{where }i\in I.\end{equation}
We associate to this system some intensities $b_k$ for $k\in I\cup(N+I)$ such that $b_i=a_i$ for $i\in I$ and such that the non-neutral clusters hypothesis~\eqref{eq:no null partial sum} holds for the full family $b_k$. 
Such a choice of $b_k$ is always possible since the non-neutral clusters hypothesis holds for the $a_i$. 
Now, let $P\subseteq I\cup(N+I)$ non empty and denote by $B_P$ the center of vorticity associated to this dynamics. Note that
\begin{equation}\begin{split}
        \der{}{t}B_P&=\bigg(\sum_{k\in P}b_k\bigg)^{-1}\sum_{k\in P}b_k\,\der{}{t}\zeta_k(t)=\bigg(\sum_{k\in P}b_k\bigg)^{-1}\sum_{i\in P\cap I}a_i\,\der{}{t}z_i(t)=\bigg(\sum_{k\in P}b_k\bigg)^{-1}\bigg(\sum_{i\in P\cap I}a_i\bigg)\der{}{t}B_{P\cap I}.
    \end{split}
\end{equation}
We now observe that it is a consequence of Lemma~\ref{lem:centers of vorticity for the distances}, that the dynamics $t\mapsto z_i(t)$ for $i\in I$ satisfies the bound~\eqref{hyp:slow center of vorticity_v2} with $p=1$ and $\alpha=1$. This eventually implies
\begin{equation}\begin{split}
    \bigg|\der{}{t}B_P\bigg|&\leq\sum_{i\in P\cap I}\sum_{j\in I\setminus P}\frac{C_0'}{|z_i(t)-z_j(t)|}+\sum_{i\in P\cap I}\frac{C_1'}{|z_i(t)|}+C_2'\\
    &\leq\sum_{k\in P}\sum_{\ell\in [I\cup(N+I)]\setminus P}\frac{C_0'}{|\zeta_k(t)-\zeta_\ell(t)|}+\sum_{k\in P}\frac{C_1'}{|\zeta_k(t)|}+C_2'.
\end{split}
\end{equation}
Let $i\in I$.
We apply Proposition \ref{prop:prevent collapse_v2} with $\alpha = 1$ and $\eta:=|\zeta_i(t)-\zeta_{N+i}(t)|$ to the dynamics of the $\zeta_k$ on the interval of time $[T_\delta, t_2)$. Observe that we always have $\eta\leq 1$ on the interval $[T_\delta, T)$ as a consequence of $\delta\leq 1/4$.

Since by continuity of the trajectories we have that
\begin{equation}|\zeta_i(t)-\zeta_{N+i}(t)|=|z_i(t)-z_i(t_2)|\,\longrightarrow\,0,\qquad\text{as }t\to t_2^-,\end{equation}
in view of Proposition \ref{prop:prevent collapse_v2}, this gives that $t_2-t > C_3\, |\zeta_i(t)-\zeta_{N+i}(t)|^{2}$. Indeed, otherwise it would imply that $|\zeta_i(\tau)-\zeta_{N+i}(\tau)| \ge \eta/2$
for all $\tau \in [t,t_2]$. Therefore we have that
\begin{equation}|\zeta_i(t)-\zeta_{N+i}(t)|=|z_i(t)-z_i(t_2)|\leq C\,\sqrt{t_2-t\,}.\end{equation}

The estimate above then gives~\eqref{eq:holder avec le bord et T delta} and concludes the proof.
\end{proof}

To conclude the proof of Theorem~\ref{thrm:bounded}-$(ii)$, we combine the Hölder estimate provided by Lemma~\ref{lem:traj are holder v2} with the remark that on the time interval $[0,T_\delta]$ the trajectories are $C^\infty$.  One can directly check that the dependency of the Hölder constant with respect to the initial datum actually reduces to a dependency with respect to $d_0$ defined at \eqref{def:d_0}. 

To conclude the proof of Theorem~\ref{thrm:bounded}, there only remains to establish Proposition~\ref{prop:prevent collapse_v2}.


\subsection{Proof of Proposition~\ref{prop:prevent collapse_v2}}~\label{sec:appendix holder}


We first prove a preliminary Lemma.

\begin{lemma}[Cluster close to zero]\label{lem:cluster_zero}
Let $\mP$ be any partition of $\{1,\ldots,N\}$ satisfying the existence of $\kappa \in (0,\frac{1}{4})$, $\delta > 0$ and points $\zeta_i \in \RR^p$ such that
        \begin{equation}\label{eq:lemme_1}
        \forall\; P \in \mP, \quad \forall\;k,\ell \in P, \qquad |\zeta_k-\zeta_\ell| \leq \delta
        \end{equation}     
and
        \begin{equation}\label{eq:lemme_2}
        \forall P \neq P' \in \mP, \quad \forall k \in P, \quad \forall \ell \in P', \quad |\zeta_k-\zeta_\ell| \geq \kappa^{-1} \delta.
        \end{equation}
Then, there exists a unique set $\widehat{P} \in \mP \cup \{\emptyset\}$ satisfying the following properties.        
\begin{itemize}
    \item[(i)] The following relation holds :
    \begin{equation}
    \forall k \notin \widehat{P}, \quad |\zeta_k| \ge \kappa^{-1}\delta/4
\end{equation}
    \item[(ii)] If $\widehat{P}\neq \emptyset$, the following relation holds:
    \begin{equation}
        \exists \ell \in \widehat{P}, \quad |\zeta_\ell| < \kappa^{-1}\delta/4.
    \end{equation}
    \item[(iii)] The following relation holds:
    \begin{equation}
        \forall j \in \widehat{P}, \quad |\zeta_j| < (\kappa^{-1}/4 +1)\delta.
    \end{equation}
\end{itemize}
\end{lemma}

\begin{proof}
If there exists $\ell \in \{1,\ldots,N\}$ such that $|\zeta_\ell| < \kappa^{-1}\delta /4$, then we denote by $\widehat{P}$ the cluster containing $\ell$ in the partition $\mP$, else we set $\widehat{P} = \emptyset$. If $\widehat{P} = \emptyset$, that means that $(i)$ must be satisfied. Conditions $(ii)$ and $(iii)$ are true since they are empty. If $\widehat{P} \neq \emptyset$, by construction, $(ii)$ is satisfied and gives $(iii)$ by \eqref{eq:lemme_1}. Using relation \eqref{eq:lemme_2}, we have for every $k \notin\widehat{P}$,
\begin{equation}
    |\zeta_k - \zeta_\ell| \ge \kappa^{-1}\delta.
\end{equation}
Therefore
\begin{equation}
    |\zeta_k| = |\zeta_k-\zeta_\ell| - |\zeta_\ell| \ge \kappa^{-1}\delta - \kappa^{-1}\delta /4 \ge \kappa^{-1}\delta /4.
\end{equation}
Thus $(i)$ is satisfied. It is clear that $(i)$ and $(ii)$ define a unique set $\widehat{P} \in \mP \cup \{ \emptyset \}$.
\end{proof}

We continue with the proof of Proposition~\ref{prop:prevent collapse_v2}.
We proceed as in the proof of Proposition \ref{prop:prevent collapse}. 
 We can assume without loss of generality that $\max\{C_0, C_1, C_2 \}>0$. 
We fix once and for all some $\eta\in(0,1]$. Let $t_1$ such that
\begin{equation}
    T-t_1 \le C_3\, \eta^{\alpha+1}.
\end{equation} 
for some constant $C_3$ to be chosen later. During the proof, we will impose several conditions on the constant $C_3$ and at the end of the proof we will observe that all these conditions can be satisfied for a constant $C_3$ which is independent of $\eta$.

Let $0<\kappa < \frac{A}{16a}$. Recall that $A$ and $a$ are respectively defined by relations \eqref{def:A} and \eqref{def:a} and that $A \neq 0$ by hypothesis~\eqref{eq:no null partial sum}. Remark that $\kappa\leq 1/16$ since $A\leq a$. 

We start by constructing partitions of $\{1,\dots, N\}$ with an iterative process. 
We first invoke the corollary of the balls lemma (Corollary~\ref{coro:balls}) to the points $\zeta_k(t_1)$ to build the first partition $\mP^1$ by choosing $d := \frac{1}{8}\kappa \eta$.
This gives the first partition $\mP^1$ and a real number $\delta_1$ satisfying
\begin{equation}\label{eq:delta_1}
    \frac{1}{8}\left(\frac{\kappa}{8}\right)^N\kappa\eta\leq \delta_1 < \frac{1}{8}\kappa\eta
\end{equation}
such that
\begin{equation}
\forall\; P \in \mP^1,\quad \forall\; k,\ell \in P, \qquad |\zeta_k(t_1)-\zeta_\ell(t_1)| \leq \delta_1
\end{equation}
and
\begin{equation}
\forall P \neq P' \in \mP^1,\quad \forall k \in P, \quad \forall \ell \in P', \quad |\zeta_k(t_1)-\zeta_\ell(t_1)| \ge \kappa^{-1}\delta_1.
\end{equation}
We now define
\begin{equation}
    r := \min \left\{ \frac{1}{8}\,;\,\frac{A}{8a\kappa}-2\right\}>0,
\end{equation}
and
\begin{equation}
    s := r\left(\frac{\kappa}{8}\right)^N.
\end{equation}
Note that $r\in(0,\frac{1}{8})$.

We now build iteratively a decreasing sequence of positive numbers $\delta_q$, an increasing finite sequence of times $(t_q)$ and a finite number of partitions $\mP^q$ satisfying \vspace{0.1cm}
        \begin{equation}\label{(ii_v2)} \tag{$I$}
        \forall\; P \in \mP^q, \quad \forall\;k,\ell \in P, \qquad |\zeta_k(t_q)-\zeta_\ell(t_q)| \leq \delta_q
        \end{equation}     
and
        \begin{equation}\label{(iii_v2)} \tag{$II$}
        \forall P \neq P' \in \mP^q, \quad \forall k \in P, \quad \forall \ell \in P', \quad |\zeta_k(t_q)-\zeta_\ell(t_q)| \geq \kappa^{-1} \delta_q.
        \end{equation}
        
Assuming that $\mP^q$ is constructed with those properties we can define $\widehat{P}_q \in \mP^q \cup \{\emptyset\}$ by applying Lemma~\ref{lem:cluster_zero} to $\mP^q$. This means that $\widehat{P}_q$ satisfies
    \begin{equation}\begin{cases}\label{eq:prop_p_chapeau}
    \forall k \notin \widehat{P}_q, \quad |\zeta_k(t_q)| \ge \kappa^{-1}\delta_q/4\\
    \forall j \in \widehat{P}_q, \quad |\zeta_j(t_q)| < (\kappa^{-1}/4 +1)\delta_q.\end{cases}
    \end{equation}
The construction proceeds as follows. If the following relations are satisfied
        \begin{equation}\label{(vi_v2)}\tag{$\star$}
        \begin{cases}
           \displaystyle \forall\;k \notin \widehat{P}_q , \quad  \forall \tau \in [t_q,T), \qquad |\zeta_k(\tau)-\zeta_k(t_q)| \leq \kappa^{-1}\delta_q/8 \vspace{1mm}\\

            \displaystyle \forall k \in  \widehat{P}_q, \quad \forall \tau \in [t_q,T), \qquad |\zeta_k(\tau)| \leq \frac{3}{8} \kappa^{-1}\delta_q,
        \end{cases}
        \end{equation}
then the construction stops. Else, we will construct a time $t_{q+1}$ satisfying
        \begin{equation}\label{(v_v2)}\tag{$III$}
        \begin{cases}
           \displaystyle \forall\;k \notin  \widehat{P}_q,\quad \forall \tau \in [t_q,t_{q+1}], \qquad |\zeta_k(\tau)-\zeta_k(t_q)| \leq \kappa^{-1}\delta_q/8 \vspace{1mm}\\

            \displaystyle \forall\; k \in  \widehat{P}_q, \quad \forall \tau \in [t_q,t_{q+1}], \qquad |\zeta_k(\tau)| \leq \frac{3}{8} \kappa^{-1}\delta_q,
        \end{cases}
        \end{equation}
a real number $\delta_{q+1}$ satisfying
        \begin{equation}\label{(iv_v2)}\tag{$IV$}
              s \delta_q \le \delta_{q+1} < r \delta_q
        \end{equation}
and the next partition $\mP^{q+1}$ satisfying \eqref{(ii_v2)} and \eqref{(iii_v2)} at step $q+1$ as well as

\begin{equation}\label{(i_v2)}\tag{$V$}
    \mP^{q+1} \text{ is a sub-partition of } \mP^{q+1}
\end{equation}
and
\begin{equation}\label{(vii_v2)}\tag{$VI$}
   \mP^{q+1} \neq \mP^{q} \quad  \text{ or }  \quad \big( \widehat{P}_q \neq \emptyset \text{ and }\widehat{P}_{q+1} = \emptyset\big).
\end{equation}\vspace{0.1cm}

Let us observe that condition \eqref{(vii_v2)} ensures that the construction has a finite number of steps since we will prove that necessarily, $\widehat{P}_{q+1} \subset \widehat{P}_q$ and thus if this set is empty at one step, it will remain empty afterwards. Therefore $\mP^{q+1}$ is a strict sub-partition of $\mP^q$ except for at most one step. 

Let $q\in\NN^\ast$ be fixed and assume that the partitions $\mP^{q'}$ are constructed for all $q'=1\dots q$. Assume that \eqref{(vi_v2)} is not satisfied. We now construct $t_{q+1}$, $\delta_{q+1}$ and $\mP^{q+1}$. 

Since \eqref{(vi_v2)} is not satisfied, we can define the time $t_{q+1} \in [t_q,T)$ as being the largest time such that
\begin{equation}
\forall\;l\notin \widehat{P}_q, \quad \forall\;\tau\in[t_q,t_{q+1}],\qquad |\zeta_\ell(\tau) - \zeta_\ell(t_{q})| \leq \kappa^{-1}\delta_q/8
\end{equation}
and 
\begin{equation}
\forall\;k \in  \widehat{P}_q, \quad \forall\;\tau \in [t_q,t_{q+1}], \qquad |\zeta_k(\tau)| \leq \frac{3}{8} \kappa^{-1}\delta_q.
\end{equation}
By continuity of the trajectories, such a time $t_{q+1}$ does exist. This definition ensures that~\eqref{(v_v2)} holds true.

To define the new partition $\mP^{q+1}$, we apply Corollary \ref{coro:balls} to the points $(\zeta_j(t_{q+1}))_j$ with $d = r \delta_q$. This gives the partition $\mP^{q+1}$ and a real number $\delta_{q+1}>0$ such that~\eqref{(ii_v2)}, \eqref{(iii_v2)} hold true at step $q+1$ and~\eqref{(iv_v2)} holds true. 

We now prove \eqref{(i_v2)}.
For any $m \in P, n \in P'$, with $P\neq P' \in \mP^q \setminus \{\widehat{P}_q\}$ we have that
\begin{align*}
    |\zeta_m(t_{q+1}) - \zeta_n(t_{q+1})| & = |\zeta_m(t_q) - \zeta_n(t_q) + \zeta_m(t_{q+1}) - \zeta_m(t_q) + \zeta_n(t_q) - \zeta_n(t_{q+1})| \\
    & \ge |\zeta_m(t_q) - \zeta_n(t_q)| - | \zeta_m(t_{q+1}) - \zeta_m(t_q)| - |\zeta_n(t_q) - \zeta_n(t_{q+1})|.
    \end{align*}
We bound the first term using Hypothesis~\eqref{(iii_v2)} and the other two terms by using Hypothesis~\eqref{(v_v2)} to obtain that
    \begin{equation}
    |\zeta_m(t_{q+1}) - \zeta_n(t_{q+1})|  \ge \kappa^{-1} \delta_q - 2 \kappa^{-1}\delta_q/8 \ge \kappa^{-1} \delta_q/2.
\end{equation}
By Hypothesis~\eqref{(iv_v2)}, we know that $\delta_{q+1} < r \delta_q$. Since $r < 1/8$ and $ 16 \le \kappa^{-1}$, we infer that
\begin{equation}\label{eq:rapport_deltas}
    \kappa^{-1} \delta_q > 128\, \delta_{q+1} \quad \text{ and } \quad \kappa^{-1}\delta_q > 16 (\kappa^{-1}/4 + 1) \delta_{q+1}  
\end{equation}
Consequently, we have that
\begin{equation}
     |\zeta_m(t_{q+1}) - \zeta_n(t_{q+1})|  > \delta_{q+1}.
\end{equation}
In light of Hypothesis~\eqref{(ii_v2)} at step $q+1$, this proves that $m$ and $n$ do not belong to the same cluster in the partition $\mP^{q+1}$.

Now if $m \in P \in \mP^q \setminus \{\widehat{P}_q\}$ and $n \in \widehat{P}_q$, then 
\begin{align*}
    |\zeta_m(t_{q+1}) - \zeta_n(t_{q+1})| & = |\zeta_m(t_{q+1}) - \zeta_m(t_q) + \zeta_m(t_q) - \zeta_n(t_q) + \zeta_n(t_q) - \zeta_n(t_{q+1})| \\
    & \ge |\zeta_m(t_q) - \zeta_n(t_q)| - |\zeta_m(t_{q+1}) - \zeta_m(t_q)| - |\zeta_n(t_{q})| - |\zeta_n(t_{q+1})|.
\end{align*}
We bound the first term using Hypothesis~\eqref{(iii_v2)} and the other three using Hypothesis~\eqref{(v_v2)} to obtain that
\begin{equation}
    |\zeta_m(t_{q+1}) - \zeta_n(t_{q+1})|
    \ge \kappa^{-1}\delta_q - \kappa^{-1}\delta_q/8 - \frac{3}{8}\kappa^{-1}\delta_q - \frac{3}{8}\kappa^{-1}\delta_q = \kappa^{-1}\delta_q/8.
\end{equation}
Recalling relation~\eqref{eq:rapport_deltas}, this gives that
\begin{equation}
    |\zeta_m(t_{q+1}) - \zeta_n(t_{q+1})|  > \delta_{q+1}.
\end{equation}
In conclusion, if $m$ and $n$ do not belong to the same cluster in $\mP^q$, they do not belong to the same cluster in $\mP^{q+1}$. This proves that $\mP^{q+1}$ is a sub-partition of $\mP^q$. Hence \eqref{(i_v2)} is proved. \vspace{2mm}

There only remains to prove \eqref{(vii_v2)}. We start by proving an inclusion property.
Since $\mP^{q+1}$ satisfies~\eqref{(ii_v2)} and~\eqref{(iii_v2)}, we can already define $\widehat{P}_{q+1}$. Let us prove that
\begin{equation}\label{eq:inclusion_des_p_chapeau}
    \widehat{P}_{q+1} \subset \widehat{P}_q.
\end{equation} 
Indeed, $\forall k \notin \widehat{P}_q$, by \eqref{eq:prop_p_chapeau} and Hypothesis \eqref{(v_v2)} we have that 
\begin{equation}
    |\zeta_k(t_{q+1})| \ge |\zeta_k(t_q)| - |\zeta_k(t_q) - \zeta_k(t_{q+1})| \ge \kappa^{-1}\delta_q (1/4 - 1/8) = \kappa^{-1}\delta_q/8.
\end{equation}
Recalling once again relation \eqref{eq:rapport_deltas}, we have that
\begin{equation}
    |\zeta_k(t_{q+1})| > (\kappa^{-1}/4+1)\delta_{q+1}.
\end{equation}
Relations \eqref{eq:prop_p_chapeau} at step $q+1$ conclude that $k \notin \widehat{P}_{q+1}$. Therefore $\widehat{P}_{q+1} \subset \widehat{P}_q$.\vspace{2mm}

It is not necessarily true that $\mP^{q+1} \neq \mP^q$ for all $q$. In order to prove \eqref{(vii_v2)}, we need to separate the analysis into two cases. Since $t_{q+1}$ is the largest time such that Hypothesis~\eqref{(v_v2)} hold true, we either have that 
\begin{equation}\label{hyp:case2v2-1}
    \exists\, k \in \widehat{P}_q, \qquad |\zeta_k(t_{q+1})| = \frac{3}{8}\kappa^{-1}\delta_{q},
\end{equation}
we denote this Case 1, or that
\begin{equation}\label{majtq+1_v2}
  \exists\, k \notin\widehat{P}_q, \qquad |\zeta_k(t_{q+1})) - \zeta_k(t_{q})| =\kappa^{-1}\delta_q/8.
\end{equation}
This is Case 2.\vspace{2mm}

\vspace{3mm}



$\bullet\;$\textbf{Case 1:}
Assume that \eqref{hyp:case2v2-1} holds true for a given $k \in \widehat{P}_q$. In particular, this requires that $\widehat{P}_q \neq \emptyset$.

If $\mP^{q+1} \neq \mP^q$, then \eqref{(vii_v2)} is proved. 
Recall that~\eqref{(ii_v2)},~\eqref{(iii_v2)},~\eqref{(v_v2)},~\eqref{(iv_v2)} and~\eqref{(i_v2)} are proved. We have thus correctly constructed $\mP^{q+1}$ and end Case 1 here. \vspace{1mm}

Assume now that $\mP^{q+1} = \mP^q$. In this case we need to prove that $\widehat{P}_{q+1} = \emptyset$. 

Let $\ell \in \widehat{P}_q$.  Since $\widehat{P}_q \in \mP^q$, we have in particular that $\widehat{P}_q \in \mP^{q+1}$. Consequently, by condition~\eqref{(ii_v2)} used at step $q+1$,
\begin{equation}
    | \zeta_k(t_{q+1}) - \zeta_\ell(t_{q+1})| \le \delta_{q+1}.
\end{equation}
Combining this with the fact that $k$ satisfies \eqref{hyp:case2v2-1} gives that
\begin{align*}
    |\zeta_\ell(t_{q+1})| & = |\zeta_\ell(t_{q+1}) - \zeta_k(t_{q+1}) + \zeta_k(t_{q+1})| \\
    & \ge |\zeta_k(t_{q+1})| - |\zeta_\ell(t_{q+1}) - \zeta_k(t_{q+1})|\\
    & \ge \frac{3}{8}\kappa^{-1}\delta_q - \delta_{q+1}.
\end{align*}
Recalling relation \eqref{eq:rapport_deltas}, we observe that
\begin{equation}
    \frac{3}{8}\kappa^{-1}\delta_q - \delta_{q+1} \ge (\kappa^{-1}/4+1)\delta_{q+1}.
\end{equation}
Therefore,
\begin{equation}
    |\zeta_\ell(t_{q+1})| \ge (\kappa^{-1}/4+1)\delta_{q+1}.
\end{equation}
This relation combined with relation \eqref{eq:prop_p_chapeau} at step $q+1$ implies that $\ell \notin \widehat{P}_{q+1}$. Therefore any $l \in \widehat{P}_q$ satisfies $\ell \notin \widehat{P}_{q+1}$. By relation~\eqref{eq:inclusion_des_p_chapeau} this means that $\widehat{P}_{q+1}= \emptyset$. Therefore, \eqref{(vii_v2)} is proved. This concludes Case~1.


$\bullet\;$\textbf{Case 2:} 
Assume that \eqref{majtq+1_v2} holds true for a given $k \notin \widehat{P}_q$. Let us prove that $\mP^{q+1} \neq \mP^q$.

Let $P \in \mP^{q}$ such that $k \in P$. For $\tau \in [t_q,t_{q+1}]$ and $j \in \widehat{P}_q$, by Hypothesis~\eqref{(v_v2)} we have that
\begin{equation}\label{tagada}
    |\zeta_j(\tau)-\zeta_j(t_q)|  \le |\zeta_j(\tau)| + |\zeta_j(t_q)| \leq\frac{6}{8}\kappa^{-1} \delta_q.
\end{equation}
This last relation is also true for $j \notin \widehat{P}_q$ because of the stronger relation \eqref{(v_v2)}. Then, for any $i\in P$, any $j \notin P$ and any $\tau \in [t_q,t_{q+1}]$ we have that
\begin{equation}
    |\zeta_i(\tau) - \zeta_j(\tau)|  \ge |\zeta_i(t_q) - \zeta_j(t_q)| - |\zeta_i(\tau) - \zeta_i(t_q)| - |\zeta_j(\tau) - \zeta_j(t_q)|.
\end{equation}
We bound the first term using Hypothesis~\eqref{(iii_v2)}, then recalling that $P \neq \widehat{P}_q$, we bound the second term using Hypothesis~\eqref{(v_v2)} and the third with relation~\eqref{tagada} to obtain that
\begin{equation}\label{toto1}
     |\zeta_i(\tau) - \zeta_j(\tau)|   \ge \kappa^{-1}\delta_q - \frac{1}{8}\kappa^{-1}\delta_q - \frac{6}{8}\kappa^{-1} \delta_q  =  \kappa^{-1}\delta_q /8.
\end{equation}
Relation \eqref{eq:prop_p_chapeau} and Hypothesis~\eqref{(v_v2)} give that for every $\tau \in [t_q,t_{q+1}]$ and for every $i \in P$,
\begin{equation}\label{toto2}
    |\zeta_i(\tau)| \ge |\zeta_i(t_q)| - |\zeta_i(t_q)-\zeta_i(\tau) | \ge \kappa^{-1}\delta_q /4-\kappa^{-1}\delta_q /8 = \kappa^{-1}\delta_q /8.
\end{equation}
Recalling relation \eqref{hyp:slow center of vorticity_v2} we have that
\begin{equation}
     \left| \der{}{t} B_P(\tau)\right| \leq \sum_{i\in P} \sum_{j\notin P} \frac{C_0}{|\zeta_i(\tau)-\zeta_j(\tau)|^\alpha} + \sum_{i\in P}\frac{C_1}{|\zeta_i(\tau)|^\alpha}+C_2.
\end{equation}
Plugging relations \eqref{toto1} and \eqref{toto2} into this last relation gives
\begin{align*}
    \left| \der{}{t} B_P(\tau)\right| & \leq \sum_{i\in P} \sum_{j\notin P} \frac{C_0}{(\kappa^{-1}\delta_q/8)^\alpha} + \sum_{i\in P}\frac{C_1}{(\kappa^{-1}\delta_q/8)^\alpha}+C_2 \\
    & \leq \frac{8^\alpha N^2 (C_0+C_1)}{(\kappa^{-1}\delta_q)^\alpha} + C_2.
\end{align*}

Up to a multiplicative constant, this estimate is identical to the estimate~\eqref{de Cadix a des yeux de velours} obtained in the proof of Proposition \ref{prop:prevent collapse}. Provided that the constant $C_3$ is small enough, the same argument as in page \pageref{eq:points s'eloignent} leads to $\mP^{q+1} \neq \mP^q$. 
The hypothesis that is required on $C_3$ in this case, which plays the role of relation \eqref{j ai soif}, is:
\begin{equation}\label{hyp:C3}
    C_3 \eta^{\alpha+1} \le \frac{a}{A}\delta_q \left( \frac{8^\alpha \, N^2 \, (C_0+C_1)}{(\kappa^{-1}\delta_q)^\alpha} + C_2\right)^{-1}.
\end{equation}
Condition \eqref{(vii_v2)} is proved. This concludes Case 2 and our construction. 

It is clear that the situation 
\begin{equation}
\widehat{P}_q \neq \emptyset \text{ and } \widehat{P}_{q+1} = \emptyset
\end{equation}
can happen at most one time since we have proved relation \eqref{eq:inclusion_des_p_chapeau} holds at every step of the construction. Therefore this iterative process has at most $N$ steps, namely $q = 1\ldots Q$ with $Q \le N+1$. We recall that Hypothesis~\eqref{(vi_v2)} must hold true at the final step $Q$. \vspace{3mm}

Recalling that $\delta_1$ satisfies~\eqref{eq:delta_1}, we can prove by induction, using the construction condition \eqref{(iv_v2)} and the fact that $s < \left(\frac{\kappa}{8}\right)^N$, that
        \begin{equation}\label{eq:encadrement_delta_v2}
              \frac{1}{8}s^{q}\kappa \eta \leq\delta_q \leq  r^{q-1} \kappa \eta \frac{1}{8}.
        \end{equation}
        
We now conclude the proof of Proposition~\ref{prop:prevent collapse_v2}. We first establish the following intermediate property: for every $i \in \{1,\ldots,N\}$ and for every $\tau \in [t_1,T)$,
\begin{equation}\label{Pas bouger}
    |\zeta_i(\tau) - \zeta_i(t_1)| \le \eta/4.
\end{equation}

Indeed, in the case where $i \notin \widehat{P}_1$, we first observe that $\forall q \in \{1,\ldots,Q\}$, $i \notin \widehat{P}_q$. If we set the convention that $t_{Q+1} = T$, then there exists a unique $q \in \{1,\ldots,Q\}$ such that $\tau \in [t_q,t_{q+1})$. We write
\begin{equation}
    |\zeta_i(\tau) - \zeta_i(t_1)| \leq  \sum_{q'=1}^{q-1} |\zeta_i(t_{q'+1})-\zeta_i(t_{q'} )| + |\zeta_i(\tau) - \zeta_i(t_q)|.
\end{equation}
Then, by construction hypothesis~\eqref{(v_v2)} and~\eqref{(vi_v2)} we have that
\begin{equation}
    |\zeta_i(\tau) - \zeta_i(t_1)| \leq \sum_{q'=1}^{q}\kappa^{-1}\delta_{q'}/8.
\end{equation}
By relation \eqref{eq:encadrement_delta_v2} and the fact that $r<1/8$ this yields
\begin{equation}
    |\zeta_i(\tau) - \zeta_i(t_1)| \leq \frac{1}{8}\sum_{q'=1}^q r^{q'-1} \eta/8 \leq \eta/4.
\end{equation}

If on the contrary $i \in \widehat{P}_1$, then we define $q \in \{1,\ldots,Q\}$ as being the greatest index such that $i \in \widehat{P}_q$. 
We have by construction hypothesis~\eqref{(v_v2)} and~\eqref{(vi_v2)} and the fact that the sequence $\delta_q$ is decreasing that for any $\tau \in [t_1,t_{q+1})$
\begin{equation}
    |\zeta_i(\tau) - \zeta_i(t_1)| \le |\zeta_i(\tau)| + |\zeta_i(t_1)| \le 2 \frac{3}{8}\kappa^{-1}\delta_1 \le \eta/8.
\end{equation}
Moreover by Hypothesis~\eqref{(v_v2)}, \eqref{(vi_v2)} and relation~\eqref{eq:encadrement_delta_v2} we have that for $\tau \ge t_{q+1}$ that
\begin{equation}\label{eq:da_one}
    |\zeta_i(\tau) - \zeta_i(t_{q+1})| \le \sum_{q'=q+1}^\infty \kappa^{-1}\delta_{q'} /8 \le \eta/8
\end{equation}
so that
\begin{equation}\label{eq:da_two}
    |\zeta_i(\tau) - \zeta_i(t_1)| \le |\zeta_i(\tau) - \zeta_i(t_{q+1})| + |\zeta_i(t_{q+1}) - \zeta_i(t_1)| \le \eta/4.
\end{equation}
We have proved relation \eqref{Pas bouger}.

Let $i$ and $j$ such that $|\zeta_i(t_1) - \zeta_j(t_1)| \ge \eta$. Relation \eqref{Pas bouger} applied on $i$ and $j$ gives that 
\begin{align*}
    |\zeta_i(\tau)-\zeta_j(\tau)| & = |\zeta_i(\tau) - \zeta_i(t_1) + \zeta_i(t_1)-\zeta_j(t_1) + \zeta_j(t_1) - \zeta_j(\tau)| \\
    & \ge |\zeta_i(t_1)-\zeta_j(t_1)| - |\zeta_i(\tau) - \zeta_i(t_1)| - |\zeta_j(t_1) - \zeta_j(\tau)| \\
    & \ge \eta - 2\eta/4 =\eta/2.
\end{align*}
This proves relation~\eqref{eq:da_one} of Proposition~\ref{prop:prevent collapse_v2}.

Now if $i$ is such that $|\zeta_i(t_1)| \ge \eta$ then for every $\tau \in [t_1,T)$,
\begin{equation}
    |\zeta_i(\tau)| \ge |\zeta_i(t_1)| - |\zeta_i(t_1)-\zeta_i(\tau)| \ge \eta/2,
\end{equation}
as a consequence of relation \eqref{Pas bouger}. This proves~\eqref{eq:da_two}.

What remains to prove is the fact that $C_3$ does not depend on $\eta$. Proceeding as in the proof of Proposition~\ref{prop:prevent collapse}, starting page \eqref{eq:condition t reformulee}, we can take
\begin{equation}
    C_3 = \frac{a}{A} s^{(N+1)(\alpha+1)}\frac{\kappa}{8^\alpha\, N^2\,(C_0+C_1)+C_2}
\end{equation}
to ensure that relation \eqref{hyp:C3} hold for every $q \in \{1,\ldots,Q\}$. Choosing $\kappa$ and replacing $s$ by its value gives the announced constant $C_3$.\qed \vspace{1cm}


\textbf{\LARGE Appendix}

\appendix
\section{Optimality of the Hölder exponent and self-similar collapses}\label{appendix:optimality}
This appendix is devoted to the proof of the existence of collapses for all values of $\alpha>0$ and to check that the Hölder exponent given by Theorem~\ref{thrm:plane} is optimal. 
The first part of this appendix studies necessary conditions to have a self-similar collapse.
In the second part, we exhibit an example of self-similar collapse in the case of the $3$-vortex problem for any value of $\alpha>0$.

We say that a solution is a self-similar collapse at time $T>0$ if there exist $C^1$ maps $f:[0,T] \rightarrow \RR_+$ and $\theta : [0,T) \rightarrow \RR$ satisfying
\begin{equation}\label{hyp:self similar def f and theta}
    \begin{cases}
        f(0) = 1 \\
        f(T) = 0 \\
        \theta(0) = 0
    \end{cases}
\end{equation}
and for every $j \in \{ 1,\dots, N\}$, and every $t \in [0,T)$
\begin{equation}\label{hyp:self similar def x}
    x_j(t) = f(t)\, x_j(0)\, e^{\cu \theta(t)}
\end{equation}
where $\cu$ is the complex unit ($\cu^2 = -1$). Recall that taking the ``$\perp$'', the rotation of angle $\pi/2$ in $\RR^2$, is equivalent to the multiplication by $\cu$ in $\CC$. By continuity of $f$, the condition $f(T)=0$ implies that for every $j \in \{ 1,\dots, N\}$,
\begin{equation}\label{hyp:x(T)=0}
    x_j(t) \longrightarrow 0,\qquad\text{as }t\to T^-.
\end{equation}
Moreover, a necessary condition  for the collapse to happen exactly at time $T>0$ is to have $f(t) > 0$ for every $t <T$.

\subsection{Necessary conditions for a self-similar collapse}\label{section:appendix_necessary}
We consider the $\alpha$-point-vortex dynamic \eqref{eq:evo alpha}. The case $\alpha=1$ (corresponding to Euler point-vortices in the plane) being already well-known~\cite{Aref_1979, Aref_2010, Grotto_Pappalettera_2020, Hiraoka_2008, Krishnamurthy_Stremler_2018}, we fully concentrate here on the case $\alpha\neq 1$. 
The aim is to extract some necessary conditions on the intensities $a_i$ and on the starting positions $x_1(0), \ldots, x_N(0)$ of a configuration to have a self similar collapse.

We introduce $l_{ij} = |x_i-x_j|$. Since the Hamiltonian \eqref{def:Hamiltonien} is preserved during the motion, the first condition we obtain is that $H(t) = H(0)$ holds at all times. However relation \eqref{hyp:self similar def x} gives that
\begin{equation}
    H(t) = \frac{1}{f(t)^{\alpha-1}} H(0).
\end{equation}
Therefore the Hamiltonian must be equal to $0$, which reads
\begin{equation}\label{eq:CN Hamiltonian self similar collapse}
    \sum_{i\neq j} \frac{a_ia_j}{l_{ij}^{\alpha-1}(0)} = 0.
\end{equation}
The second invariant we have is
\begin{equation}
    L(t) = \sum_{i\neq j} a_ia_j l_{ij}^2(t).
\end{equation}
Indeed, $L = \sum_{i\neq j} a_i a_j |x_i-x_j|^2 = 2 \left(\sum_{i=1}^N a_i\right)I - 2|M|^2$ with $M$ defined by \eqref{def:M} and $I$ defined by \eqref{def:I(X)}. Both $M$ and $I$ are constant in time so $L$  is also constant in time. 
Relation \eqref{hyp:x(T)=0} gives that $l_{ij}(t)$ tends to $0$ as $t\to T^-$. 
This implies that $L(0)=0$, which reads
\begin{equation}\label{eq:CN L(X) self similar collapse}
    \sum_{i\neq j} a_ia_j l_{ij}^2(0)=0.
\end{equation}
Relations \eqref{eq:CN Hamiltonian self similar collapse} and \eqref{eq:CN L(X) self similar collapse} are two necessary conditions on the intensities and the starting positions for a self similar collapse to occur. These conditions are the generalization for all $\alpha$ of the already known conditions when $\alpha=1$ (see~\cite{Aref_2010}) and when $\alpha=2$ (see~\cite{Reinaud2021}).

We now go further to find the necessary expression of $f$ appearing in~\eqref{hyp:self similar def x}. In the case of self-similar collapses, using relation~\eqref{hyp:self similar def x}, we compute the evolution $l_{ij}^2$ for $\alpha$ models~\eqref{eq:evo alpha}:
\begin{align*}
    &\der{}{t}l_{ij}^2 = 2(x_i-x_j)\cdot \der{}{t} (x_i-x_j) \\
    & =2(x_i-x_j) \cdot\left( \sum_{k\neq i} a_k\frac{(x_i-x_k)^\perp}{|x_i-x_k|^{\alpha+1}} - \sum_{\ell \neq j}a_\ell\frac{(x_j-x_\ell)^\perp}{|x_j-x_\ell|^{\alpha+1}}  \right) \\
    & = 2\left[(x_i(0)-x_j(0))f(t)e^{\cu\theta}\right] \cdot \left[\frac{e^{\cu \theta}}{f(t)^{\alpha}}\left( \sum_{k\neq i} a_k\frac{(x_i(0)-x_k(0))^\perp}{|x_i(0)-x_k(0)|^{\alpha+1}} - \sum_{\ell \neq j}a_\ell\frac{(x_j(0)-x_\ell(0))^\perp}{|x_j(0)-x_\ell(0)|^{\alpha+1}}  \right)\right] \\
    &=\frac{2}{f(t)^{\alpha-1}}(x_i(0)-x_j(0)) \cdot \left( \sum_{k\neq i} a_k\frac{(x_i(0)-x_k(0))^\perp}{|x_i(0)-x_k(0)|^{\alpha+1}} - \sum_{\ell \neq j}a_\ell\frac{(x_j(0)-x_\ell(0))^\perp}{|x_j(0)-x_\ell(0)|^{\alpha+1}}  \right)
\end{align*}
Therefore, there exist constants $C_{i,j}$ independent of the time such that
\begin{equation}\label{eq:diff eq on f}
    f'(t)f(t) = \frac{C_{i,j}}{f(t)^{\alpha-1}} ,
\end{equation}
where
\begin{equation}
    C_{i,j} := \frac{x_i(0)-x_j(0)}{|x_i(0)-x_j(0)|^2} \cdot \left( \sum_{k\neq i} a_k\frac{(x_i(0)-x_k(0))^\perp}{|x_i(0)-x_k(0)|^{\alpha+1}} - \sum_{\ell \neq j}a_\ell\frac{(x_j(0)-x_\ell(0))^\perp}{|x_j(0)-x_\ell(0)|^{\alpha+1}}  \right).
\end{equation}
We now observe that the equality~\eqref{eq:diff eq on f} implies that the constants $C_{i,j}$ do not actually depend on $i$ and $j$. We denote $C = C_{i,j}$ and solve \eqref{eq:diff eq on f} to obtain
\begin{equation}
    f(T)^{\alpha+1}-f(t)^{\alpha+1} = (\alpha+1) C (T-t).
\end{equation}
Recalling from \eqref{hyp:self similar def f and theta} that $f(T) = 0$, we observe that we must have that $C<0$. This gives
\begin{equation}
    f(t) = C' (T-t)^\frac{1}{\alpha+1}
\end{equation}
with
\begin{equation}
    C' = (-C(\alpha+1))^\frac{1}{\alpha+1}.
\end{equation}
Recalling from \eqref{hyp:self similar def f and theta} that $f(0)=1$, we have $C' = \frac{1}{T^\frac{1}{\alpha+1}}$, so that we finally obtain that the function $f$ is necessarily equal to
\begin{equation}\label{eq:expression of f}
    f(t) = \left(\frac{T-t}{T}\right)^\frac{1}{\alpha+1}.
\end{equation}

We do the same for the function $\theta$. Taking the derivative in time of relation \eqref{hyp:self similar def x} gives
\begin{equation}
\sum_{k\neq j} \cu a_k\frac{x_j-x_k}{|x_j-x_k|^{\alpha+1}} = f'(t)x_j(0) e^{\cu\theta(t)} +  f(t)x_j(0)\cu\theta'(t)e^{\cu\theta(t)}.
\end{equation}
Thus
\begin{equation}
    \frac{1}{f(t)^\alpha}\sum_{k\neq j} a_k\frac{x_j(0)-x_k(0)}{|x_j(0)-x_k(0)|^{\alpha+1}} = -\cu f'(t)x_j(0) + f(t)x_j(0)\theta'(t).
\end{equation}
Recalling relations \eqref{eq:diff eq on f} and \eqref{eq:expression of f} we have that
\begin{equation}\label{eq:diff eq on theta}
 -\cu Cx_j(0) + \frac{T-t}{T}\theta'(t)x_j(0)=D_{j},
\end{equation}
where 
\begin{equation}
    D_{j} = \sum_{k\neq j} a_k \frac{x_j(0)-x_k(0)}{|x_j(0)-x_k(0)|^{\alpha+1}}.
\end{equation}
We remark that relation~\eqref{eq:diff eq on theta} implies that the quantity $\frac{T-t}{T}\theta'(t)$ is constant in time. We denote its value by $D\in\RR$. Recalling from \eqref{hyp:self similar def f and theta} that $\theta(0) = 0$ we infer that
\begin{equation}\label{eq:expression of theta}
    \theta(t) = -DT\ln\frac{T-t}{T}.
\end{equation}
Gathering relations \eqref{hyp:self similar def x}, \eqref{eq:expression of f} and \eqref{eq:expression of theta} we obtain that the trajectories of a self-similar collapse are necessary of the following form:
\begin{equation}
    x_j(t) = x_j(0) \left(\frac{T-t}{T}\right)^\frac{1}{\alpha +1}\exp\bigg({-\cu DT\ln\frac{T-t}{T}}\bigg).
\end{equation}
It is a direct computation to check that in the expression above, the function $x_j(t)$ belongs to the Hölder space $\cC^{0,\beta}([0,T];\CC)$ if and only if $\beta\leq\frac{1}{\alpha+1}$.

\subsection{Existence of a self-similar collapse}

We now prove that, for any $\alpha > 0$, there exists an initial configuration leading to a self-similar collapse for the point-vortex dynamic \eqref{eq:evo alpha}. 

We construct our example in the case $N=3$, $a_2=a_3=1$ and $a_1 = a \in \RR^\ast$ to be fixed later. We define $A = |x_2-x_3|$, $B = |x_3-x_1|$ and $C =| x_1 - x_2|$. 
Let $\lambda \in (0,1)$. 
We choose values for $x_1(0)$, $x_2(0)$ and $x_3(0)$ such that these three points form a direct orthogonal triangle with $A(0) = 1$, $B(0) = \lambda$ and $C(0) = \sqrt{\lambda^2+1}$. 
We now want to find a value for $\lambda$ and $a$ such that the associated solution is a self-similar collapse. More precisely, we are going to exhibit a value for $a$ and $\lambda$ such that $B(t) = \lambda A(t)$ and $C(t) = \sqrt{\lambda^2+1}A(t)$ hold for any time $t$, and such that $A(t)\to 0$ as $t\to T$ for some $T>0$.

It is a direct computation using the point-vortex equations~\eqref{eq:evo alpha} to check that (see also~\cite{Reinaud2021}), 
\begin{equation}
    \der{x_2}{t} - \der{x_3}{t}  = a\frac{(x_2-x_1)^\perp}{C^{\alpha+1}}+\frac{(x_2-x_3)^\perp}{A^{\alpha+1}} - a\frac{(x_3-x_1)^\perp}{B^{\alpha+1}}-\frac{(x_3-x_2)^\perp}{A^{\alpha+1}} 
\end{equation}
so that
\begin{equation}
    (x_2-x_3)\cdot \der{}{t} (x_2-x_3) = a \left(  \frac{(x_2-x_3)\cdot(x_2-x_1)^\perp}{C^{\alpha+1}} - \frac{(x_2-x_3)\cdot(x_3-x_1)^\perp}{B^{\alpha+1}} \right).
\end{equation}
In the case of the $3$ vortex problem~\cite{Aref_1979, Reinaud2021}, the equations above can be rewritten using $\bigtriangleup$, the area of the direct triangle $(x_1,x_2,x_3)$. We have that
\begin{equation}
    (x_2-x_3)\cdot(x_2-x_1)^\perp = (x_3-x_1)\cdot(x_3-x_2)^\perp = (x_1-x_2)\cdot(x_1-x_3)^\perp = -2\bigtriangleup.
\end{equation}
This gives:
\begin{equation}
    \der{A^2}{t} = 4a\!\bigtriangleup\! \left( \frac{1}{B^{\alpha+1}} - \frac{1}{C^{\alpha+1}} \right).
\end{equation}
Similar computations lead to
\begin{equation}
    \der{B^2}{t} = 4\!\bigtriangleup\!\left( \frac{1}{C^{\alpha+1}} - \frac{1}{A^{\alpha+1}} \right)
\end{equation}
and
\begin{equation}
    \der{C^2}{t} = 4\!\bigtriangleup\!\left( \frac{1}{A^{\alpha+1}} - \frac{1}{B^{\alpha+1}} \right).
\end{equation}
If we define $X := (A^2,B^2,C^2)$, then $t\mapsto X(t)$ is a solution of an autonomous differential equation:
\begin{equation}
    \der{X}{t} = F(X),
\end{equation}
where $F:\RR^3\to\RR^3$ is defined by the $3$ previous equations.

We now introduce another differential system with unknown $\tilde{X} = (\tilde{A}^2,\tilde{B}^2,\tilde{C}^2)$ defined by:
\begin{equation}
    \der{\tilde{X}}{t} = 4a\tilde{\bigtriangleup} \left( \frac{1}{\tilde{B}^{\alpha+1}} - \frac{1}{\tilde{C}^{\alpha+1}} \right)\begin{pmatrix} 1 \\  \lambda^2  \\  1+\lambda^2\end{pmatrix},
\end{equation}
where $\tilde{\bigtriangleup}$ is the area of a triangle which sides are $\tilde{A}$, $\tilde{B}$ and $\tilde{C}$.
We consider the same initial datum $\tilde{X}(0) = X(0)$. 
By construction we have that $\tilde{B} = \lambda \tilde{A}$ and $\tilde{C} = \sqrt{1+\lambda^2}\tilde{A}$ at every time. Indeed, these two relations hold at time 0 and hold for the the derivatives in time for all times.

The aim is to prove that $\tilde{X}$ satisfies the same differential system as $X$ for a well-chosen value of $\lambda$, so that we can deduce $X=\tilde{X}$. We need to prove that
\begin{equation}\label{eq:relation needed on B}
    \der{\tilde{B}^2}{t} = 4 \tilde{\bigtriangleup} \left( \frac{1}{\tilde{C}^{\alpha+1}} - \frac{1}{\tilde{A}^{\alpha+1}} \right),
\end{equation}
and
\begin{equation}\label{eq:relation needed on C}
    \der{\tilde{C}^2}{t} = 4 \tilde{\bigtriangleup} \left( \frac{1}{\tilde{A}^{\alpha+1}} - \frac{1}{\tilde{B}^{\alpha+1}} \right).
\end{equation}
Expressing $\der{\tilde{B}^2}{t}$, $\tilde{B}$ and $\tilde{C}$ in terms of $\tilde{A}$ in relation \eqref{eq:relation needed on B} gives that \eqref{eq:relation needed on B} is equivalent to
\begin{equation}
    4\lambda^2 a\tilde{\bigtriangleup} \left( \frac{1}{(\lambda\tilde{A})^{\alpha+1}} - \frac{1}{(\sqrt{1+\lambda^2}\tilde{A})^{\alpha+1}} \right)  = 4 \tilde{\bigtriangleup} \left( \frac{1}{(\sqrt{1+\lambda^2}\tilde{A})^{\alpha+1}} - \frac{1}{\tilde{A}^{\alpha+1}} \right).
\end{equation}
Simplifying, this is equivalent until the collapse to
\begin{equation}\label{eq:condition lambda 1}
    \lambda^2 a\left( \frac{1}{\lambda^{\alpha+1}} - \frac{1}{(1+\lambda^2)^\frac{\alpha+1}{2}} \right)  =  \frac{1}{(1+\lambda^2)^\frac{\alpha+1}{2}} - 1.
\end{equation}
Similarly, relation \eqref{eq:relation needed on C} is equivalent to
\begin{equation}\label{eq:condition lambda 2}
    a(1+\lambda^2) \left(  \frac{1}{\lambda^{\alpha+1}} - \frac{1}{(1+\lambda^2)^\frac{\alpha+1}{2}}\right)  =  1 - \frac{1}{\lambda^{\alpha+1}} .
\end{equation}
Relations \eqref{eq:condition lambda 1} and \eqref{eq:condition lambda 2} form a system of two equations on $a$ and $\lambda$:
\begin{equation}\label{eq:system a lambda}
    \begin{cases}
        a & = \frac{1}{\lambda^2}\left( \frac{1}{\lambda^{\alpha+1}} - \frac{1}{(1+\lambda^2)^\frac{\alpha+1}{2}} \right)^{-1}\left( \frac{1}{(1+\lambda^2)^\frac{\alpha+1}{2}} - 1\right) \\
        a & = \left( 1 - \frac{1}{\lambda^{\alpha+1}} \right) (1+\lambda^2)^{-1} \left(  \frac{1}{\lambda^{\alpha+1}} - \frac{1}{(1+\lambda^2)^\frac{\alpha+1}{2}}\right)^{-1}.
    \end{cases}
\end{equation}
Consequently, we want to find a solution $\lambda$ to the following equation:
\begin{multline*}
         \frac{1}{\lambda^2}\left( \frac{1}{\lambda^{\alpha+1}} - \frac{1}{(1+\lambda^2)^\frac{\alpha+1}{2}} \right)^{-1}\left( \frac{1}{(1+\lambda^2)^\frac{\alpha+1}{2}} - 1\right) \\ = \left( 1 - \frac{1}{\lambda^{\alpha+1}} \right) (1+\lambda^2)^{-1} \left(  \frac{1}{\lambda^{\alpha+1}} - \frac{1}{(1+\lambda^2)^\frac{\alpha+1}{2}}\right)^{-1}.
\end{multline*}
This is equivalent to
\begin{equation}
          \frac{1+\lambda^2}{(1+\lambda^2)^\frac{\alpha+1}{2}} \left( \frac{1-(1+\lambda^2)^\frac{\alpha+1}{2}}{\lambda^2}\right) =  1 - \frac{1}{\lambda^{\alpha+1}}.
\end{equation}
In other words, we are looking for a root of the function $g$ defined by
\begin{equation}
    g(\lambda) =  \frac{1+\lambda^2}{(1+\lambda^2)^\frac{\alpha+1}{2}} \left( \frac{1-(1+\lambda^2)^\frac{\alpha+1}{2}}{\lambda^2}\right) -\left( 1 - \frac{1}{\lambda^{\alpha+1}} \right).
\end{equation}
We have for $\alpha > 0$ that
\begin{equation}
    \lim_{\lambda \rightarrow 0} g(\lambda) = +\infty.
\end{equation}
On the other hand,
\begin{equation}
    g(1) = \frac{1-2^\frac{\alpha+1}{2}}{2^\frac{\alpha-1}{2}} <0.
\end{equation}
Therefore, by the intermediate value theorem applied to the continuous map $g$, there exists $\lambda \in (0,1)$ such that $g(\lambda)=0$. Therefore the system \eqref{eq:system a lambda} has a solution $a \in \RR^\ast_-$, $\lambda \in (0,1)$, and for these values of $a$ and $\lambda$ we have that
\begin{equation}
    \der{\tilde{A}^2}{t} = 4\tilde{\bigtriangleup} \left( \frac{a}{\tilde{B}^{\alpha+1}} - \frac{a}{\tilde{C}^{\alpha+1}} \right),
\end{equation}
\begin{equation}
    \der{\tilde{B}^2}{t} = 4\tilde{\bigtriangleup} \left( \frac{1}{\tilde{C}^{\alpha+1}} - \frac{1}{\tilde{A}^{\alpha+1}} \right)
\end{equation}
and
\begin{equation}
    \der{\tilde{C}^2}{t} = 4\tilde{\bigtriangleup} \left( \frac{1}{\tilde{A}^{\alpha+1}} - \frac{1}{\tilde{B}^{\alpha+1}} \right),
\end{equation}
namely 
\begin{equation}
    \der{\tilde{X}}{t} = F(\tilde{X}).
\end{equation}
Since $\tilde{X}(0) = X(0)$, by the Cauchy Lipschitz theorem, we have that $\tilde{X}(t) = X(t)$ for all $t\in[0,T)$. Now we prove that our solution is a collapse.
By construction our configuration of point-vortices is a self-similar orthogonal triangle, so we have that $\bigtriangleup = \frac{1}{2} AB$. Therefore,
\begin{equation}
    \der{A^2}{t} = 2a \lambda A^2 \ \left(\frac{1}{\lambda^{\alpha+1}A^{\alpha+1}} - \frac{1}{(1+\lambda^2)^\frac{\alpha+1}{2}A^{\alpha+1}}  \right)
\end{equation}
and thus, since $a<0$, there exists a constant $K>0$ depending only on $a$, $\lambda$ and $\alpha$ such that
\begin{equation}
    \der{A}{t} = -\frac{K}{\alpha+1} \frac{1}{A^\alpha}.
\end{equation}
We integrate to get
\begin{equation}
    A^{\alpha+1}(t)-1 = - Kt,
\end{equation}
and thus
\begin{equation}
    A(t) = (1-Kt)^{\frac{1}{1+\alpha}}.
\end{equation}
So there is a collapse at the time $T = \frac{1}{K}$ which depends only on $a$, $\lambda$ and $\alpha$. Since we have a collapse, we can come back to Section \ref{section:appendix_necessary} to recall that the quantity $L$ is conserved and must vanish~\eqref{eq:CN L(X) self similar collapse}. Applying this to our situation gives
\begin{equation}
    a (1+\lambda^2) + a \lambda^2 + 1 = 0
\end{equation}
and thus
\begin{equation}
    a = - \frac{1}{1+2\lambda^2}.
\end{equation}
This formula, somehow much simpler that the one we found, gives us directly that $a \in (-1,0)$. Therefore, the non neutral clusters hypothesis \eqref{eq:no null partial sum} is satisfied and the center of vorticity is preserved during the motion. This concludes that our solution is a self-similar collapse according to relations \eqref{hyp:self similar def f and theta}, where we just need to translate the points so that the center of vorticity is placed at 0. 

We proved the existence of a self-similar collapse. We also observed above that the trajectories are $\cC^{0,\frac{1}{\alpha+1}}([0,T])$ and not better. So the $1/(\alpha+1)$-Hölder regularity stated in Theorem~\ref{thrm:plane} is optimal. Concerning the optimality of the $1/2$-Hölder regularity for the Euler point-vortex dynamics in bounded domains $\Omega$, it is a consequence of the results established in~\cite{Grotto_Pappalettera_2020}.\vspace{1cm}



\vspace{0.5cm}

\noindent{\Large\textbf{Acknowledgments}}\vspace{0.2cm} 

The authors of this article wishes to thank Dragoş Iftimie, PhD advisor of the first author, for his precious help and valuable comments, for his time and confidence during all the realisation of the article. Special thanks for his meticulous rereading of the article and for interesting discussions on point-vortex problems.\vspace{0.2cm}

The authors acknowledge grant support from the project ``Ondes déterministes et aléatoires'' (ANR-28-CE40-0020) and from the project ``Multiéchelle et Trefftz pour le transport numérique'' (ANR-19-CE46-0004) of the Agence Nationale de la Recherche (France).\vspace{2mm}

Data sharing not applicable to this article as no datasets were generated or analysed during the current study.

There are no Competing Interests.

\bibliographystyle{plain}
\bibliography{bibliography}

\end{document}